\newtheorem{axiom}{Axiom}
\newtheorem{thm}{Theorem}[section]
\newtheorem{cor}[thm]{Corollary}
\newtheorem{lem}[thm]{Lemma}
\newtheorem{prop}[thm]{Proposition}
\theoremstyle{definition}
\newtheorem{rem}[thm]{Remark}
\newtheorem{defn}[thm]{Definition}
\newtheorem{example}[thm]{Example}
\newtheorem{notation}[thm]{Notation}
\newtheorem{notations}[thm]{Notations}
\theoremstyle{remark}
\numberwithin{equation}{section}
\newcommand{\abs}[1]{\left\vert#1\right\vert}
\newcommand{\Qp}{\mathbb Q_{p}}
\newcommand{\ord}{\mathrm{ord}\:}
\newcommand{\ac}{\overline{\mathrm{ac}}\:}
\newcommand{\Def}{\mathrm{Def}}
\newcommand{\A}{\mathbb A}
\newcommand{\Kdim}{\mathrm{Kdim}\:}
\newcommand{\Z}{\mathbb Z}
\renewcommand{\L}{\mathbb L}
\newcommand{\Bn}{ \mathscr B_{\mathfrak n}}
\newcommand{\n}{\mathfrak n}
\newcommand{\ra}{\rightarrow}
\newcommand{\RDef}{\mathrm{RDef}}
\newcommand{\I}{\mathcal I}
\newcommand{\E}{\mathcal E}
\newcommand{\N}{\mathcal N}
\newcommand{\mA}{\mathcal A}
\newcommand{\D}{\mathcal D}
\definecolor{vert}{rgb}{0,0.6,0.2}
\def\llp{\mathopen{(\!(}}
\def\rrp{\mathopen{)\!)}}
\def\SS{\mathrm{SS}}
\def\WF{\mathrm{WF}}
\def\Id{\mathrm{Id}}
\def\ccF{{\mathscr F}}
\title{Motivic wave front sets}
\date{\today}
\begin{document}
\author{Michel Raibaut}
\address{Laboratoire de Math\'ematiques\\
 Univ. Grenoble Alpes, Univ. Savoie Mont Blanc, CNRS, LAMA, 73000 Chamb\'ery, France. B\^atiment Chablais, Campus Scientifique, Le Bourget du Lac, 73376 Cedex, France}
\email{Michel.Raibaut@univ-smb.fr}
\urladdr{http://raibautm.perso.math.cnrs.fr/site/MichelRaibaut.html}

\begin{abstract}
The concept of wave front set was introduced in 1969-1970 by M. Sato in the hyperfunctions context (\cite{Sato69} and \cite{SaKaKa73}) and by L. H\"ormander (\cite{Hormander71}) in the $\mathcal C^{\infty}$ context. Howe in \cite{Howe79} used the theory of wave front sets in the study of Lie groups representations.
Heifetz in \cite{Hei85a} defined a notion of wave front set for distributions in the $p$-adic setting and used it to study some representations of $p$-adic Lie groups.

In this article, we work in the $k\llp t \rrp$-setting with $k$ a characteristic zero field.
In that setting, balls are no longer compact but working in a definable context  provides good substitutes for finiteness and compactness properties. We develop a notion of definable distributions in the framework of \cite{CluLoe08a} and \cite{CluLoe10a} for which we define notions of singular support and $\Lambda$-wave front sets (relative to some multiplicative subgroups $\Lambda$ of the valued field) and we investigate their behaviour under natural operations like pull-back, tensor product, and products of distributions.
\end{abstract}

\maketitle
\tableofcontents

\newpage
\section{Introduction}
The concept of wave front set was introduced in 1969-1970 by M. Sato in the hyperfunctions context (\cite{Sato69} and \cite{SaKaKa73}) and by L. H\"ormander (\cite{Hormander71}) in the $\mathcal C^{\infty}$ context. For any distribution $u$ in $\mathbb R^n$, a point $x$ is said to be a smooth point of $u$ if $u$ can be represented on a neighbourhood of $x$ by a $\mathcal C^{\infty}$ function. The complement of the smooth locus is called \emph{singular support} of $u$.
The \emph{wave front set} of $u$ is a set denoted by $\WF(u)$ and contained in $T^{*}(\mathbb R^n)$, the image of its projection on $\mathbb R^n$ is the singular support of $u$ and it is conic with respect to multiplication by positive scalars in the fibers of $T^{*}(\mathbb R^n)$.
For instance, if $M$ is a submanifold of $\mathbb R^n$ and $u$ is the integration of test functions along $M$, then the singular support is $M$ and its wave front set is the conormal space of $M$ minus the zero section.
The main idea of the definition of the wave front set is the characterisation of smoothness using the Fourier transform. Indeed, the Fourier transform of a distribution with compact support is representable by a function and this distribution is smooth, namely globally representable by a $\mathcal C^{\infty}$ function or equivalently its singular support is empty, if and only if its Fourier transform is rapidly decreasing. If the distribution is not smooth then we can consider the set of critical directions where $\mathcal F u$ is not rapidly decreasing. Using this idea, a point $(x_0,\xi_0)$ with $\xi_0$ non-zero is called \emph{microlocally smooth}, if there is a neighbourhood $U_{x_0}$ of $x_0$, if there is a conical neighbourhood $\Gamma$ of $\xi_0$, such that for any test function in $\mathcal D(U_{x_0})$ the restriction of the Fourier transform of the compact support distribution $\varphi u$ is rapidly decreasing in $\Gamma$.
The wave front set of $u$ is the complement in $T^{*}(\mathbb R^n)\setminus \{0\}$ of the microlocally smooth locus.
This analysis which takes into account space variables $x$ in $\mathbb R^n$ and frequency variables $\xi$ in $(\mathbb R^{n})^*\setminus \{0\}$ is called \emph{microlocal analysis}. For instance, Kashiwara and Schapira in their treatise \cite{Kashiwara-Schapira} assign a microsupport to any sheaf on a real manifold. The concept of wave front set allowed a better understanding of operations on distributions such as product, restriction, pull-back or push-forward
(see for instance \cite{Hormander71}, \cite{Hormander83},\cite{Gabor} and \cite{Duistermaat}) and it is useful in the study of propagation of singularities by pseudo-differential operators (see \cite{Hormander83}).

Howe in \cite{Howe79} used the theory of wave front sets in the study of Lie groups representations.
Heifetz in \cite{Hei85a} studied some representations of $p$-adic Lie groups.  To do this, he defined a notion of wave front set for distributions in the $p$-adic setting. He proved the $p$-adic analogues of the usual real results, for instance the projection of the wave front set on the singular support and the construction of the pull-back of a distribution. Recently, Aizenbud and Drinfeld in \cite{AizDri} used this work to study the wave front set of the Fourier transform of algebraic measures.
The construction by Heifetz is done by analogy with the real construction.
For instance, the Lebesgue measure on $\mathbb R^n$ is replaced by the $p$-adic Haar measure on $\mathbb Q_p^n$ which is locally compact. As $\mathbb Q_p$ is totally disconnected, the test functions of $\mathcal D(\mathbb R^n)$ are replaced by Schwartz-Bruhat functions of $S(\mathbb Q_p^n)$, namely locally constant functions with compact support. A distribution is an element of the dual of $S(\mathbb Q_p^n)$.
The real exponential is replaced by an additive character on $\mathbb Q_p$ with conductor $\mathbb Z_p$. The multiplicative group $(\mathbb R^*_+,\times)$ is replaced by a finite index subgroup of $\mathbb Q_p^\times$ denoted by $\Lambda$. In particular real cones are replaced by $\Lambda$-cones. For instance, this point of view was recently used by Cluckers, Comte and Loeser in \cite{CluComLoe12} and Forey in \cite{Forey} to define a notion of tangent cones in $p$-adic and $t$-adic contexts. Finally, the notion of rapidly decreasing is replaced by the notion of bounded support. 
In \cite{CHLR}, the author with Cluckers, Halupczok and Loeser revise and generalize some of the results of Heifetz. Using $\mathcal C^{\text{exp}}$-class functions introduced in \cite{CH}, we introduce a class of distributions called \emph{distributions of $\mathcal C^{\text{exp}}$-class} which is stable under Fourier transformation and has various forms of uniform behaviour across non-archimedean local fields. Their wave front set is the complement of the zero locus of a $\mathcal C^{\text{exp}}$-class function.

 In this article we present a notion of wave front set in the motivic context, as suggested by Loeser in \cite{Loe11a}.
 Motivic integration, introduced by Kontsevich in 1995 in a Lecture in Orsay \cite{Kon95a}, is an integration theory over $k\llp t \rrp$, where $k$ is a  characteristic zero field. The field $k\llp t \rrp$ endowed with the $t$-adic topology is totally disconnected and it is not locally compact. There is no possible way to define a real Haar measure invariant by translation. The values of the motivic measure are not reals but elements, sometimes called in this context \emph{motives}, of a Grothendieck ring of varieties. This theory was developed by Denef and Loeser in a geometric way in \cite{DenLoe99a} and in an arithmetic way in \cite{DenLoe01a}.
 In this last paper some specialization results of motivic integrals on $p$-adic integrals are proved. Later, by analogy with integration of constructible functions against the Euler characteristic in real geometry, Cluckers -- Loeser generalized these works, defining in \cite{CluLoe08a} and \cite{CluLoe10a} (announced in \cite{CluLoe04a}, \cite{CluLoe04b}, \cite{CluLoe05b}) a motivic integration of constructible exponential functions with specialization results to $p$-adic integrals.
 Hrushovski and Kazhdan defined also a motivic integration theory with additive characters and definable distributions for the theory of algebraically closed valued fields of equicharacteristic zero (\cite{HruKaz06} and \cite{HruKaz08}).

 In this article we use the point of view of Cluckers -- Loeser. The measurable sets are definable parts for the Denef--Pas language of models of the theory $H_{\ac,0}$ of Henselian valued fields with residue characteristic zero and discrete value group. We recall in section \ref{rappelCL} some of the constructions of \cite{CluLoe08a} and \cite{CluLoe10a} used all along the paper. In particular the notions of constructible exponential functions, motivic integrals and motivic Fourier transforms. 
 In section \ref{definable distributions} we define a notion of \emph{definable distributions} and the Fourier transform of a definable distribution. Even if the motivic Schwartz-Bruhat functions are not finite linear combinations of characteristic functions of balls, a definable distribution is determined by just its values on characteristic functions of balls (Theorem \ref{extension-theorem}).
 In section \ref{section-microlocal} we define the notions of singular support and \emph{$\Lambda$-motivic wave front set} of a definable distribution where $\Lambda$ is a definable multiplicative subgroup of the valued field. In Example \ref{wf-variete} we describe the wave front set of the definable distribution induced by a definable set which is locally a graph of a definable function. We prove results on the projection of the wave front set (Theorem \ref{projectionWF}),
  pull-back of a definable distribution (Theorem \ref{thmf^{*}u}), tensor product and product (Definition \ref{produit-tensoriel} and \ref{produit}), analogous to the classical real and $p$-adic results. These proofs use in the real and $p$-adic settings a stationary phase formula. We give its motivic version in section \ref{integrales-oscillantes} (Proposition \ref{oscillante}). The classical settings use also in a crucial way the compactness of the sphere (real or $p$-adic).
 In our context, where the $t$-adic spheres are not compact, finiteness results come from
 the definability of our objects. In particular we prove in the section \ref{dc} (Proposition \ref{definablecompactness}), that any definable and continuous function defined on a bounded and closed subset of $k\llp t \rrp^n$ with integer values takes finitely many values.

\section{Motivic integration and constructible motivic functions}  \label{rappelCL}
For the reader's convenience
we shall start by recalling briefly some definitions, notations and constructions from \cite{CluLoe08a} and
\cite{CluLoe10a} that will be used in this article.  For an introduction to this circle of ideas  we refer to the surveys \cite{CluLoe05a}, \cite{CluHalLoe11} and \cite{GorYaf09} and the notes \cite{CluLoe04a}, \cite{CluLoe04b} and \cite{CluLoe05b}. 

\subsection{Denef-Pas, Presburger language}
We fix a field $k$ of characteristic zero and we denote by $\mathrm{Field}_{k}$ the category of fields containing $k$.
For any field $K$ in this category we consider the field of Laurent series
$K(\!(t)\!)$ endowed with its natural \emph{valuation}
$$\ord : K(\!(t)\!)\setminus\{0\} \longrightarrow \mathbb Z$$
extended by $\ord 0=+\infty$,
and with the \emph{angular component} mapping
$$\ac:K(\!(t)\!)\ra K$$
defined by $\ac(x)=xt^{-\ord x}\mod t$ if $x\neq 0$ and $\ac (0) =0$.\\

We shall use the three sorted language introduced by Denef and Pas in \cite{Pas89}
$$\mathscr L_{DP,P} = (\mathbf{L}_{\mathrm{Val}},\mathbf{L}_{\mathrm{Res}},\mathbf{L}_{\mathrm{Ord}},\ord, \ac)$$
with sorts corresponding respectively to \emph{valued field}, \emph{residue field} and
\emph{value group} variables.
The languages $\mathbf{L}_{\mathrm{Val}}$ and $\mathbf{L}_{\mathrm{Res}}$ are the ring language
$\mathbf{L}_{\mathrm{Rings}}=(+,-, \cdot ,0,1)$ and the language $\mathbf{L}_{\mathrm{Ord}}$ is the Presburger language
$$\mathbf{L}_{\mathrm{PR}}=\{+,-,0,1,\leq\}\cup\{\equiv_{n}\mid n\in \mathbb N, n>1\},$$
with $\equiv_{n}$  symbols interpreted as  equivalence relation modulo $n$. Symbols $\ord$and $\ac$will be interpreted respectively as valuation and angular component, so that for any $K$ in $\mathrm{Field}_{k}$ the triple $(K(\!(t)\!),K,\mathbb Z)$ is a structure for $\mathscr L_{DP,P}$. We shall also add
constant symbols in the $\mathrm{Val}$-sort and in the $\mathrm{Res}$-sort for elements of $k(\!(t)\!)$, resp.~of $k$.\\

We will work with the $\mathcal{L}_{DP,P}$-theory $H_{\ac,0}$ of  structures whose valued field is Henselian, with residue field characteristic zero, and with value group $\Z$.
Denef and Pas proved in \cite{Pas89} the following theorem on elimination of valued field quantifiers.

\begin{thm}[Denef-Pas \cite{Pas89}, Presburger \cite{Presb29}]  \label{QE}
Every formula $\phi(x,\xi,\alpha)$ without parameters in the $\mathscr L_{DP,P}$-language, with $x$ variables in the $\mathrm{Val}$-sort, $\xi$ variables in the $\mathrm{Res}$-sort and $\alpha$ variables in the
$\mathrm{Ord}$-sort is $H_{\ac,0}$-equivalent to a finite disjunction of formulas of the form
$$\psi(\ac f_{1}(x),...,\ac f_{k}(x),\xi)\land \eta(\ord f_1(x),...,\ord f_k(x),\alpha),$$
with $\psi$ a $\mathbf{L}_{\mathrm{Res}}$-formula, $\eta$ a $\mathbf{L}_{\mathrm{Ord}}$-formula without quantifiers and $f_1,...,f_k$ polynomials in $\Z[x]$. The theory $H_{\ac,0}$ admits elimination of quantifiers in the valued field sort.
\end{thm}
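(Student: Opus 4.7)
The plan is to proceed by induction on the complexity of $\phi$, where the nontrivial step is the elimination of a single existential quantifier applied to a formula already in the claimed disjunctive normal form. Value-group quantifiers are handled by Presburger's theorem on the language $\mathbf{L}_{\mathrm{PR}}$, which permits $\eta$ to be assumed quantifier-free and any new $\mathrm{Ord}$-quantifier to be reabsorbed into a quantifier-free $\eta$. Residue-field quantifiers require no elimination at all: the statement permits $\psi$ to be an arbitrary $\mathbf{L}_{\mathrm{Res}}$-formula, so any such quantifier simply joins $\psi$.

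The heart of the proof is the elimination of an existential $\mathrm{Val}$-quantifier. After the easy reductions above, one must show that a formula of the form
$$\exists x \in \mathrm{Val}, \ \psi\bigl(\ac f_1(x,y),\ldots,\ac f_k(x,y),\xi\bigr) \land \eta\bigl(\ord f_1(x,y),\ldots,\ord f_k(x,y),\alpha\bigr),$$
with $f_i \in \Z[x,y]$, is equivalent to a formula in the claimed normal form in $(y,\xi,\alpha)$ alone. The key technical input is a uniform-in-parameters factorization of each $f_i(x,y)$, viewed as a polynomial in $x$: Henselianity together with residue characteristic zero provides, via a parametric Hensel lemma, definable centers $c_\ell(y)$ and integers $m_{i,\ell}$, together with a partition of the $\mathrm{Val}$-sort into finitely many parameter-definable cells on which
$$\ord f_i(x,y) \;=\; \sum_\ell m_{i,\ell}\,\ord\bigl(x - c_\ell(y)\bigr) \;+\; \ord d_i(y),$$
with a parallel explicit description of $\ac f_i(x,y)$ in terms of the $\ac(x-c_\ell(y))$ and the $\ac d_i(y)$.

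Once such a factorization is in place, the existential statement becomes the question of whether there exist a valuation $n = \ord(x-c_\ell(y)) \in \Z$ and an angular component $\zeta = \ac(x - c_\ell(y))$ in the residue field satisfying the transformed constraints on the chosen cell. Existence of $n$ is a Presburger statement eliminated by the first step and absorbed into $\eta$; existence of $\zeta$ is an $\mathbf{L}_{\mathrm{Res}}$-quantifier that joins $\psi$. The main obstacle is the uniform factorization itself: producing the centers $c_\ell(y)$ and the cell decomposition on which each $\ord f_i$ and $\ac f_i$ is genuinely monomial in $x - c_\ell(y)$, all definable in $\mathscr L_{DP,P}$ from the coefficients of $f_i$. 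This is where Pas's original argument invests its technical weight, via iterated applications of Hensel's lemma in Henselian fields of residue characteristic zero and careful bookkeeping of the generic valuations of the coefficients of the $f_i$ as one moves across cells; once this is secured, the reassembly into the required normal form is bookkeeping.
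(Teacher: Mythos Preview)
The paper does not prove this theorem: it is stated as a cited result attributed to Denef--Pas \cite{Pas89} and Presburger \cite{Presb29}, with no proof given in the body of the paper. Your sketch is a faithful outline of Pas's original strategy---induction on formula complexity, Presburger elimination for $\mathrm{Ord}$-quantifiers, absorption of $\mathrm{Res}$-quantifiers into $\psi$, and reduction of a single $\mathrm{Val}$-existential to $\mathrm{Res}$- and $\mathrm{Ord}$-existentials via a parametric Hensel-type factorization (essentially the cell decomposition of \cite{Pas89})---so there is nothing to compare against and no discrepancy to flag.
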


\subsection{Definable subassignments}

From now on we will work with the Denef-Pas language enriched with constant symbols in the $\mathrm{Val}$-sort and in the $\mathrm{Res}$-sort for elements of $k(\!(t)\!)$, resp.~of $k$, and we will denote this language also by $\mathcal{L}_{DP,P}$.

\subsubsection{Definable subassignments and definable morphisms}
Let $\varphi$ be a formula respectively in $m$, $n$ and $r$ free variables in the various sorts. For every field $K$ in
$\mathrm{Field}_{k}$, we denote by $h_{\varphi}(K)$ the subset of
$$h[m,n,r](K):=K(\!(t)\!)^{m}\times K^{n} \times \mathbb Z^{r}$$
consisting of points satisfying $\varphi$. The assignment $K\mapsto h_{\varphi}(K)$ is called a
\emph{definable subassignment} or \emph{definable set}. For instance we will denote by $\{*\}$ the definable subassignment $h[0,0,0]$ defined by $K \mapsto \mathrm{Spec}\: K$.
A \emph{definable morphism} $F$ between two definable subassignments $h_{\varphi}$
and $h_{\psi}$ is a collection of applications parametrized by $K$ in $\mathrm{Field}_{k}$ $$F(K):h_{\varphi}(K)\ra h_{\psi}(K)$$ such that the graph map
$K\mapsto \mathrm{Graph}F(K)$ is a definable subassignment. Definable subassignments and definable morphisms are precisely objects
and morphisms of the category of definable subassignments over $k$ denoted by $\mathrm{Def}_{k}$.
More generally, for any definable subassignment $S$ in $\Def_{k}$, we will consider the category $\Def_S$ of definable subassignments over $S$ whose objects are definable morphisms
$\theta_{Z}$ in $\Def_{k}$ from a definable $Z$ to $S$ and morphisms are definable maps $g:Y\ra Z$ such that
$\theta_Y = \theta_Z \circ g$.

\subsubsection{Finiteness of some definable functions}
We deduce from  Theorem \ref{QE} on quantifier elimination the following corollary which will be crucial in the proof of Proposition \ref{definablecompactness}  on definable compactness.
\begin{cor} \label{fini}
For non negative integers $m$,$n$ and $r$, every definable map from $h[0,n,0]$ to $h[m,0,r]$ or from $h[0,0,r]$ to $h[m,n,0]$ or from $h[0,n,r]$ to $h[m,0,0]$ takes finitely many values.
\end{cor}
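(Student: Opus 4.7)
The plan is to apply the Denef--Pas quantifier elimination Theorem \ref{QE} to the graph of the definable map and to exploit that, in the resulting normal form, the dependence on the valued-field variables is visible only through $\ac$ and $\ord$ of polynomials. I focus on the first case, $F \colon h[0,n,0] \to h[m,0,r]$; the two others are entirely parallel. Decompose $F = (F_V, F_O)$ with $F_V$ into $K\llp t \rrp^m$ and $F_O$ into $\mathbb Z^r$. It is enough to show that $F_V$ takes finitely many values, since on each fiber of $F_V$ the restriction of $F_O$ is a definable map from a subset of $K^n$ to $\mathbb Z^r$; its graph is a formula without free Val variables, so Theorem \ref{QE} reduces it to a finite disjunction $\bigvee_j \psi_j(\xi) \wedge \eta_j(\alpha)$, and single-valuedness forces each active set $\{\alpha : \eta_j(\alpha)\}$ to be a singleton, so only finitely many values are attained.

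For $F_V \colon K^n \to K\llp t \rrp^m$, write its graph $\phi(y,\xi)$ in the normal form of Theorem \ref{QE}:
\[
\phi(y,\xi) \;\equiv\; \bigvee_{j} \psi_j(\ac f_{j,1}(y), \dots, \ac f_{j,k_j}(y), \xi) \wedge \eta_j(\ord f_{j,1}(y), \dots, \ord f_{j,k_j}(y)),
\]
with $f_{j,i}$ polynomials in $y = (y_1, \dots, y_m)$. Set $\pi_j(y) := (\ac f_{j,i}(y), \ord f_{j,i}(y))_{i=1,\dots,k_j}$. For any $y_1 = F_V(\xi_1)$ in the image, some disjunct $j_1$ witnesses $\phi(y_1, \xi_1)$; then every $y'$ with $\pi_{j_1}(y') = \pi_{j_1}(y_1)$ also satisfies this disjunct at $\xi_1$, hence $\phi(y', \xi_1)$ holds, and single-valuedness of $F_V$ forces $y' = y_1$. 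Thus $\pi_{j_1}^{-1}(\pi_{j_1}(y_1)) = \{y_1\}$.

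The main obstacle is the geometric claim that for each $j$ only finitely many $y \in K\llp t \rrp^m$ have singleton fiber under $\pi_j$. The Taylor expansion $f_{j,i}(y_0 + t^N z) = f_{j,i}(y_0) + O(t^N)$ for $z \in K\llb t \rrb^m$ shows that if $f_{j,i}(y_0) \neq 0$ for every $i$, then for $N$ large enough the whole ball $y_0 + t^N K\llb t \rrb^m$ lies in $\pi_j^{-1}(\pi_j(y_0))$, contradicting singleton-ness. Hence $y_0$ must satisfy $f_{j,i}(y_0) = 0$ for every $i$ in some subset $I \subseteq \{1,\dots,k_j\}$; sharpening the same Taylor estimate on the indices $i \notin I$ then forces $y_0$ to be an isolated $K\llp t \rrp$-point of the algebraic set $V(\{f_{j,i}\}_{i \in I}) \subset K\llp t \rrp^m$. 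Noetherianity of $k\llp t \rrp [y_1, \dots, y_m]$ and the finiteness of isolated points of an algebraic variety then bound the singleton-fiber locus uniformly as $I$ varies, giving the sought finiteness of the image of $F_V$.

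The two remaining cases follow the same strategy: the Val-sort target coordinates are always the ones that must be pinned down; the Res- and Ord-sort source variables only parameterize the constraints $\psi_j$ and $\eta_j$; single-valuedness forces the same singleton-fiber condition, whose finiteness has just been established, and the residual Res-to-Res or Ord-to-Ord (resp.~Res) component is then controlled by the pure-sort quantifier elimination as in the first paragraph.
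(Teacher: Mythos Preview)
The paper does not spell out a proof; the corollary is stated as a direct consequence of Theorem~\ref{QE} and is later invoked (in the proof of Lemma~\ref{lemme-cle}) simply as ``the quantifier elimination theorem and syntactical analysis of quantifier free formulas.'' Your overall strategy --- apply QE to the graph, observe that single-valuedness forces each image point $y_0$ to have a singleton fibre under $\pi_j=(\ac f_{j,i},\ord f_{j,i})_i$, and then bound the locus of such $y_0$ --- is the natural one and is correct up to the last step.

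The gap is in your justification that only finitely many $y_0$ have singleton $\pi_j$-fibre. You correctly show that such a $y_0$ lies on $V_I:=V(\{f_{j,i}\}_{i\in I})$ and is \emph{isolated in the valuation topology} among its $K\llp t\rrp$-points. You then appeal to ``Noetherianity and the finiteness of isolated points of an algebraic variety''. But Noetherianity only bounds the \emph{Zariski}-isolated points (the zero-dimensional components), whereas what you established is valuation-isolation, and these notions differ: the origin on $V(y_1^2+y_2^2)\subset\mathbb Q\llp t\rrp^{2}$ is the unique rational point, hence valuation-isolated, yet the variety is an irreducible curve and the origin is not Zariski-isolated. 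What you actually need is that an algebraic set over a Henselian field has only finitely many valuation-isolated rational points. This is true but requires an additional argument: by Hensel's lemma a smooth rational point on a positive-dimensional component admits a valuation-open neighbourhood of rational points on that component and so cannot be valuation-isolated; hence valuation-isolated points lie in the singular locus, which has strictly smaller dimension, and one concludes by induction on dimension. Without this step your finiteness claim is not established.
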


\subsubsection{Points and fibers}
A \emph{point} $x$ of a definable set $X$ is by definition a couple $x=(x_{0},K)$ where $K$ is an extension of $k$ and $x_{0}$ is a point of $X(K)$. The field $K$ will be denoted by $k(x)$ and called
\emph{residue field} of $x$.

Let $f$ be a definable morphism from a subassignment of $h[m,n,r]$ denoted by  $X$ to a subassignment of $h[m',n',r']$ denoted by $Y$. Let $\varphi(x,y)$ be the formula which describes the graph of $f$, where $x$ runs over $h[m,n,r]$ and $y$ runs over $h[m',n',r']$.
For every point $y=(y_{0},k(y))$ of $Y$, the \emph{fiber} $X_y$ is the object of $\Def_{k(y)}$ defined by the formula $\varphi(x,y_0)$ which has coefficients in $k(y)$ and $k(y)(\!(t)\!)$. Taking fibers at $y$ gives rise to a functor $i_{y}^{*}:\Def_{Y}\ra \Def_{k(y)}$.

\subsubsection{Dimension}
For any positive integer $m$, an algebraic subvariety $Z$ of $\A_{k(\!(t)\!)}^{m}$ induces a definable subassignment $h_{Z}$ of $h[m,0,0]$ with $h_{Z}(K)$ equal to $Z(K(\!(t)\!))$ for any extension $K$ of $k$.
The \emph{Zariski closure} of a subassignment $S$ of $h[m,0,0]$ is by definition the subassignment of the intersection $W$ of all algebraic subvarieties $Z$ of $\A_{k(\!(t)\!)}^{m}$ such that $h_{Z}$ contained $S$. The \emph{dimension} $\Kdim S $ of $S$ is naturally defined as $\dim W$.
More generally, the dimension of a subassignment $S$ of $h[m,n,r]$ is defined as the dimension $\Kdim p(S)$ where $p$ is
the projection from $h[m,n,r]$ to $h[m,0,0]$. It is  proved in \cite{CluLoe08a},
using results of Pas \cite{Pas89} and van den Dries \cite{VDDries89}, that
 isomorphic definable subassignments in $\Def_{k}$ have the same dimension.

\subsection{Grothendieck rings and exponentials}

\subsubsection{The category $\RDef_{k}^{\exp}$}
For any definable subassignment $Z$ in $\Def_{k}$,
the  subcategory $\RDef_Z$ of $\Def_{Z}$ whose
objects are definable morphisms $\pi_Y$ with $Y$ a subassignment of a product
$Z\times h[0,n,0]$, with $n$ a non negative integer and $\pi_Y$ the canonical projection on $Z$,
has been introduced in \cite{CluLoe08a}.

\begin{example}
If $Z$ is the point $h[0,0,0]$, then the subcategory $\RDef_Z$ is the category of definable sets in the ring language with coefficients from $k$.
\end{example}

More generally, in \cite{CluLoe10a} \emph{motivic additive characters} were considered in this context through the category $\RDef_{Z}^{\:\exp}$ whose objects are triples $(\pi_Y,\xi, g)$ with $\pi_Y$ a definable set in $\RDef_{Z}$, $\xi$ a definable morphism from $Y$ to $h[0,1,0]$ and $g$ a definable morphism from $Y$ to $h[1,0,0]$. A morphism from $(\pi_{Y'},\xi', g')$ to $(\pi_Y,\xi, g)$ in
$\RDef_{Z}^{\:\exp}$ is a morphism $h$ from $Y'$ to $Y$ satisfying the equalities
$$\pi_{Y'}=\pi_{Y}\circ h,\:\: \xi'=\xi \circ h,\:\:g'= g \circ h.$$

\begin{rem}\label{rem:1.4}The functor $\pi_Y \mapsto (\pi_Y,0,0)$
allows to identify
 $\RDef_{Z}$ as  a full subcategory of $\RDef_{Z}^{\:\exp}$. \end{rem}

\subsubsection{The Grothendieck ring $K_{0}(\RDef_{Z}^{\:\exp})$}  \label{exponential}
As an abelian group it is the free abelian group over symbols $[\pi_Y,\xi,g]$
modulo the following relations:

 \emph{Isomorphism}.
 For any isomorphic $(\pi_Y,\xi, g)$ and $(\pi_{Y'},\xi', g')$, we consider the relation
 \begin{equation} \tag{R1}
  [\pi_Y,\xi,g]=[\pi_{Y'},\xi',g']
 \end{equation}

\emph{Additivity}.
 For $\pi_Y$ and $\pi_{Y'}$ definable subassignments of some $\pi_X$ in $\RDef_{Z}$ and for $\xi$ and $g$ defined on the union $Y\cup Y'$, we consider the relation
 \begin{equation} \tag{R2}
 [\pi_{Y\cup Y'},\xi,g]+[\pi_{Y\cap Y'},\xi_{\mid Y \cap Y'},g_{\mid Y \cap Y'}]
 =[\pi_{Y},\xi_{\mid Y},g_{\mid Y}]+[\pi_{Y'},\xi_{\mid Y'},g_{\mid Y'}].\end{equation}

 \emph{Compatibility with reduction}.
 For any $\pi_Y$ in $\Def_Z$, for any definable morphism $f$ from $Y$ to $h[1,0,0]$ with $\ord f(y) \geq 0$ for any $y\in Y$,  we consider the relation
 \begin{equation} \label{R3} \tag{R3}
  [\pi_Y,\xi,g+f]=[\pi_Y,\xi+\overline{f},g]
 \end{equation}
 with $\overline{f}$ the reduction
 of $f$ modulo $(t)$.\\

 \emph{Sum over the line}.
 Let $p$ be the canonical projection from $Y[0,1,0]$ to $h[0,1,0]$. If the morphisms $\pi_{Y[0,1,0]}$, $g$ and $\xi$ all factorize through the canonical projection from $Y[0,1,0]$ to $Y$, then we consider the relation
 \begin{equation} \label{R4} \tag{R4}
 [Y[0,1,0]\ra Z,\xi+p,g]=0.
 \end{equation}

 This Grothendieck group is endowed with a ring structure by setting
\begin{equation} \label{R5} \tag{R5}
 [\pi_Y,\xi,g] \cdot [\pi_{Y'},\xi',g']=[\pi_{Y\otimes_Z Y'},\xi\circ p_Y+\xi'\circ p_Y',g\circ p_Y+g'\circ p_Y']
\end{equation}
where $Y\otimes_Z Y'$ is the fiber product of $Y$ and $Y'$ above $Z$, $p_Y$ is the projection to $Y$ and $p_{Y'}$ is the projection to $Y'$. The element $[\Id_Z,0,0]$ is the multiplicative unit of $K_{0}(\RDef_Z^{\:\exp})$.
The Grothendieck ring $K_{0}(\RDef_Z)$ is defined as above and the functor defined in Remark \ref{rem:1.4} induces an injection $K_{0}(\RDef_Z) \to K_{0}(\RDef_Z^{\:\exp})$.

\begin{rem}
The element $[\pi_Y,\xi, g]$ of the Grothendieck ring $K_{0}(\RDef_{Z}^{\:\exp})$ 
will be denoted by $e^{\xi}E(g)[\pi_Y]$.  We will abbreviate $e^{0}E(g)[\pi_Y]$ by $E(g)[\pi_Y]$, $e^{0}E(0)[\pi_Y]$
by $[\pi_Y]$ and $e^{0}E(g)[\Id_Z]$ by $E(g)$.
\end{rem}

\begin{rem}[Interpretation of $E$] The element $E(g)$ in $K_{0}(\RDef_{Z}^{\:\exp})$ can be viewed as the exponential  (at the valued field level of the definable morphism $g$ from $Z$ to $h[1,0,0]$, said otherwise, it is a motivic additive character on the valued field evaluated in $g$. More precisely, by relations (\ref{R3}) and (\ref{R5}), $E$ can be interpreted as a universal additive character  which is trivial on the maximal ideal of the valuation ring. This is compatible with specialization to $p$-adic fields as explained in  Section 9 of \cite{CluLoe10a}.
\end{rem}

\begin{rem}[Interpretation of $e$]
 The element $e(\xi)$ in $K_{0}(\RDef_{Z}^{\:\exp})$ can be considered as the exponential (at the residue field level) of the definable morphism $\xi$ from $Z$ to $h[0,1,0]$.  By relation (\ref{R4}), $e$ can  be interpreted as a universal additive character on the residue field. For instance in the case where $Z$ is the point, the relation $[h[0,1,0]\ra \{*\},p,0] = 0$ should be interpreted as an abstraction of the classical nullity of the sum of a non trivial character over elements of a finite field. Relation (\ref{R3}) expresses compatibility under reduction modulo the uniformizing parameter between the exponentials over the valued field and over the residue field.
\end{rem}

\subsection{Constructible exponential functions}

\subsubsection{Constructible motivic functions}
In the $p$-adic context (see \cite{Denef84}, \cite{Igusabook}, \cite{CluLoe10a} and \cite{CluGorHal14a}), for instance over the field $\mathbb Q_{p}$ itself, one fixes $\Psi : K \ra \mathbb C^\times$ an additive character  trivial on $p\mathbb Z_p$ and non trivial on the set $\ord x =0$ and one denotes by $\mathbb A_{p}$ the ring
$\mathbb Z[1/p, 1/(1-p^{-i})]$.
For any $X$ contained in some $\mathbb Q_p^m$ and definable for the Macintyre language, it is natural to define the $\mathbb A_{p}$-algebra of constructible functions on $X$ denoted by $\mathcal C(X)$ and generated by function of the form
 $\abs{f}\ord(h)$ where $f$ and $h$ are definable functions from $X$ to $\mathbb Q_p$ and $h$ does not vanish. In \cite{CluLoe10a}, also a variant with additive characters is introduced, called constructible exponential functions on $X$ and denoted by $\mathcal C(X)^{\exp}$. The algebra $\mathcal C(X)^{\exp}$ is generated by $\mathcal C(X)$  and functions of the form $\psi(g)$ with $g:X\to\Qp$ with $\psi$ a non-trivial additive character on $\Qp$.   
Analogously, Cluckers -- Loeser consider in \cite{CluLoe08a} the ring
$$\mathbb A=\mathbb Z\left[\L,\L^{-1},\left(\frac{1}{1-\L^{-i}}\right)_{i>0}\right],$$
where $\L$ is a symbol, and they define the ring $\mathcal C(Z)$ of
\emph{constructible motivic functions} on a definable set $Z$ by
$$\mathcal C(Z):=K_{0}(\RDef_{Z}) \otimes_{\mathcal{P}^{0}(Z)} \mathcal{P}(Z),$$
where $\mathcal P(Z)$, called ring of \emph{Presburger constructible functions}, is the subring of the ring of functions from the set of points of $Z$ to $\mathbb A$, generated
by constant functions, definable functions from $Z$ to $\mathbb Z$ and functions of the form $\L^{\beta}$ with
$\beta$ a definable function from $Z$ to $\mathbb Z$. Here,  $\mathcal{P}^{0}(Z)$ is the subring of $\mathcal P(Z)$ generated by the constant function $\L$ and the characteristic functions $1_{Y}$ of definable subsets $Y$ of the base $Z$.
The tensor product is given by the morphism from $\mathcal{P}^{0}(Z)$ to $K_{0}(\RDef_{Z})$ sending $1_{Y}$ to the class $[Y \ra Z]$
of the canonical injection from $Y$ to $Z$ and sending $\L$ to the class $[Z[0,1,0]\ra Z]$ of the canonical projection to $Z$.

\subsubsection{Constructible exponential functions}
For any definable set $Z$ in $\Def_{k}$, the ring $\mathcal C(Z)^{\exp}$ of \emph{constructible exponential functions} is defined in
\cite{CluLoe10a} by
$$\mathcal C(Z)^{\exp}:=\mathcal C(Z)\otimes_{K_{0}(\RDef_{Z})}K_{0}(\RDef_{Z}^{\exp}),$$
where we use the morphism $a \mapsto a \otimes 1_Z$ from $K_{0}(\RDef_{Z})$ to $\mathcal C(Z)$.
For any integer $d$, we denote by $\mathcal C^{\leq d}(Z)^{\exp}$ the ideal generated by the characteristic functions $1_{Z'}$ of subassignments $Z'$ of $Z$ of dimension at most $d$. This family of ideals is a filtration of the ring $\mathcal C(Z)^{\exp}$ and
the graded ring associated
$$C(Z)^{\exp} = \oplus_{d\in \mathbb N}\mathcal C^{\leq d}(Z)^{\exp}/\mathcal C^{\leq d-1}(Z)^{\exp}$$
is called ring of \emph{constructible exponential $\ccF$-unctions}.

\begin{rem}
 Constructible $\ccF$-unctions can be compared to the equivalence classes of Lebesgue measurable functions (equality up to a zero measure set). In this article we will just write function for $\ccF$-unction; the difference still being visible in the notation $C(Z)^{\exp}$ versus $\mathcal C(Z)^{\exp}$.
\end{rem}

\subsection{Pull-back of constructible exponential functions}  \label{inverseimage-exp}
A definable map $f:Z\ra Z'$ in $\Def_{k}$ induces a pull-back morphism (cf. \S 5.4 in \cite{CluLoe08a}and \S 3.4 in \cite{CluLoe10a})
$$f^{*}:\mathcal C(Z')^{\exp} \ra \mathcal C(Z)^{\exp}.$$
Indeed, the fiber product along $f$ induces a pull-back morphism
 from $K_{0}(\RDef_{Z'})^{\exp}$ to $K_{0}(\RDef_Z)^{\exp}$ and the composition by $f$ induces also a pull-back morphism
from $\mathcal{P}(Z')$ to $\mathcal{P}(Z).$ These pull-backs are compatible with their tensor product.

\begin{rem}
 A constructible exponential function $E(g)e(\xi) \otimes \alpha \mathbb L^{\beta}$ can be thought of as
 $$z \in Z \mapsto E(g(z))e(\xi(z)) \otimes \alpha(z) \mathbb L^{\beta(z)} \in \mathcal C(\{z\})^{\exp}.$$
 More generally, the constructible exponential function $[\pi_Y] E(g)e(\xi) \otimes \alpha \mathbb L^{\beta}$ can be thought of as
 $$z \in Z \mapsto [Y_z]E(g_{\mid Y_z})e(\xi_{\mid Y_z}) \otimes \alpha_{\mid Y_z} \mathbb L^{\beta_{\mid Y_z}} \in \mathcal C(\{z\})^{\exp}.$$
 By Corollary \ref{fini}, the restrictions $\alpha_{\mid Y_z}$ and $\beta_{\mid Y_z}$ take finitely many values, but $[Y_z]$ should be considered as a kind of motive standing for a possibly infinite sum over elements in $Y_z$, which is a definable subset of some power of the residue field. With $E$ and $e$, the expression $[\pi_Y] E(g)e(\xi) $ is a kind of exponential motive, standing for possibly infinite exponential sums. In the $p$-adic case, the finiteness of the residue field allows one to see $[Y_z]$ as a finite sum again.
\end{rem}

\subsection{Push-forward of constructible exponential functions}  \label{push forward}
For $S$ in $\Def_{k}$, Cluckers -- Loeser construct in
\cite{CluLoe08a, CluLoe10a}
a functor $I_{S}^{\exp}$ from the category $\Def_{S}$ to the category $\underline{\mathrm{Ab}}$
 of abelian groups:
 $$  I_{S}^{\exp} \colon
 \begin{cases}
\Def_{S}  \longrightarrow  \underline{\mathrm{Ab}} \\
( \theta_Z : Z \to  S)  \longmapsto   (I_{S}C(\theta_Z)^{\exp} \subset C(Z)^{\exp}) \\
( \theta_Z \overset{f} \to \theta_Y)  \longmapsto  (I_{S}C(\theta_Z)^{\:\exp}\overset{f!}{\ra} I_{S}C(\theta_Y)^{\exp})
 \end{cases}
$$
satisfying natural axioms implying its uniqueness, see Theorems 10.1.1 and 13.2.1 in \cite{CluLoe08a} and
Theorem 4.1.1 in \cite{CluLoe10a}.
The elements of  $I_{S}C(\theta_Z)^{\exp}$ are called \emph{$\theta_Z$-integrable motivic constructible exponential functions} on $Z$ or simply \emph{$\theta_Z$-integrable functions}.
\begin{example} The ring $I_{S}C(\Id_S)^{\exp}$ is all of $C(S)^{\exp}$, namely, every function in $C(S)^{\exp}$ is already integrable up to $S$ itself, with the identity map $S\to S$ as structural morphism.
\end{example}

\begin{rem} We will often simply say $S$-integrable instead of $\theta_Z$-integrable and write $I_{S}C(Z)^{\exp}$ when the structural morphism $\theta_Z$ is implicitly clear.
\end{rem}

The functor $I_{S}^{\exp}$ and the integrable functions are constructed simultaneously.
The functor $I_{S}$ is first defined in \cite{CluLoe08a} in the setting without exponential and extended in \cite{CluLoe10a} in the exponential setting to $I_{S}^{\exp}$. In particular, for any $Z$ in
$\Def_{S}$, $I_{S}C(Z)^{\exp}$ is a graded subgroup of $C(Z)^{\exp}$ defined as
\begin{equation} \label{defexp} I_{S}C(Z)^{\exp}:=I_{S}C(Z)\otimes_{K_{0}(\RDef_Z)} K_{0}(\RDef_Z)^{\exp}, \end{equation}
where
$I_{S}C(Z)$ is a graded subgroup of $C(Z)$ called the group of
\emph{$S$-integrable constructible functions on $Z$}.
The natural morphism of graded groups from $I_{S}C(\theta_Z)$ to
$I_{S}C(\theta_Z)^{\exp}$ is injective.
We will use the following axioms  (see Theorem 10.1.1 in \cite{CluLoe08a} and \S 13.2 in \cite{CluLoe10a}):

\begin{axiom}[Fubini]
Let $S$ be in $\Def_{k}$. Let $f:\theta_X\ra \theta_Y$ be a definable morphism in $\Def_S$.
A constructible function $\varphi$ on $X$ is $\theta_X$-integrable if and only if $\varphi$ is $f$-integrable and
$f_{!} \varphi$ is $\theta_Y$-integrable namely:
$$\varphi \in I_{S}C(\theta_X)^{\exp} \Leftrightarrow  \varphi \in I_{Y}C(f)^{\exp}\:\:\mathrm{and}\:\: f_{!}\varphi \in I_{S}C(\theta_Y)^{\exp}.$$
\end{axiom}

\begin{axiom}[Projection formula]  \label{axiom-projection}
Let $S$ be in $\Def_k$. For every morphism $f: \theta_Z \ra \theta_Y$ in $\Def_S$, and every $\alpha$ in
$\mathcal C(Y)^{\exp}$ and $\beta$ in $I_{S}C(\theta_Z)^{\exp}$, if $f^{*}(\alpha)\beta$ belongs to $I_S C(\theta_Z)^{\exp}$, 
then $f_{!}(f^{*}(\alpha)\beta)=\alpha f_{!}(\beta)$.
\end{axiom}

\begin{axiom}[Volume of balls]  \label{volume-boule}
 Let $\theta_Y$ be in $\Def_{S}$, and
 $$Z=\{(y,z)\in Y[1,0,0] \mid \ord(z-c(y))=\alpha(y),\: \ac(z-c(y))=\xi(y)\}$$
 where $\alpha:Y\ra \mathbb Z$,
 $\xi:Y \ra h[0,1,0]\setminus \{0\}$ and
 $c:Y \ra h[1,0,0]$ are definable morphisms. Denote by
 $f:Z\ra Y$ the morphism induced
 by the projection from $Y \times h[1,0,0]$ to $Y$, and $\theta_Z$ its composition with $\theta_Y$. Then, the constructible function $[1_Z]$ is
 $\theta_Z$-integrable if and only if
 $\mathbb L^{-\alpha-1}[1_{Y}]$ is $\theta_Y$-integrable. In that case, in the ring $I_SC(Y)^{\exp}$ we have the equality $f_{!}([1_Z]) = \mathbb L^{-\alpha -1}[1_{Y}]$.
\end{axiom}

 By Axiom \ref{volume-boule}, the volume of a ball
 $\{z \in h[1,0,0] \mid \ord(z-c)=\alpha, \ac(z-c)=\xi\}$
 with $\alpha$ in $\mathbb Z$, $c$ in $k\llp t \rrp$ and $\xi$ in $k$, $\xi\neq 0$ is $\mathbb L^{\alpha-1}$.
 This is natural by analogy with the $p$-adic case.

\begin{axiom}[Relative balls of large volume]
 Let $\theta_Y$ be in $\Def_{S}$ and
 $$Z=\{(y,z)\in Y[1,0,0] \mid \ord z =\alpha(y),\: \ac z =\xi(y)\}$$
 where $\alpha:Y\ra \mathbb Z$, $\xi:Y \ra h[0,1,0]\setminus \{0\}$ are definable morphisms. Let $f:Z\ra Y$ be the morphism induced by the projection from $Y[1,0,0]$ to $Y$.
 Suppose that the constructible function $[1_{Z}]$ is $(\theta_Y \circ f)$-integrable and moreover $\alpha(y)<0$ holds for every $y$ in $Y$, then
 $f_{!}(E(z)[1_Z])=0.$
\end{axiom}

The previous axiom is also natural by analogy with the $p$-adic case, where an additive character evaluated in the identity function and integrated over a large ball is naturally zero.

  \begin{thm}[Change of variables formula, Theorem 5.2.1 of \cite{CluLoe10a}]\label{cov}
   Let $f:X\ra Y$ be a definable isomorphism between definable subassignments of dimension $d$. Let $\varphi$
   be in $\mathcal{C}^{\leq d}(Y)^{\exp}$ with a nonzero class in $C^{d}(Y)^{\exp}$. Then $[f^{*}(\varphi)]$ belongs to 
   $I_{Y}C^{d}(f)^{\exp}$ and
   $$f_{!}([f^{*}(\varphi)])=\mathbb L^{\ord (\mathrm{Jac} f) \circ f^{-1}}[\varphi].$$
  \end{thm}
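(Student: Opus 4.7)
The approach is to reduce the statement to an iterated application of Axiom \ref{volume-boule} via cell decomposition and Fubini, after first stripping off the exponential factors.

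First, reduce to the non-exponential case. Using the definition $\mathcal{C}(Y)^{\exp}=\mathcal{C}(Y)\otimes_{K_{0}(\RDef_{Y})}K_{0}(\RDef_{Y}^{\exp})$, write $\varphi$ as a finite sum of elementary tensors $\alpha_{i}\otimes[\pi_{W_{i}},\xi_{i},g_{i}]$. Pull-back commutes with the tensor product via the fiber product $W_{i}\times_{Y} X$, so
$$f^{*}\varphi=\sum_{i} f^{*}\alpha_{i}\otimes\bigl[\pi_{W_{i}\times_{Y} X},\xi_{i}\circ p,g_{i}\circ p\bigr].$$
The projection formula (Axiom \ref{axiom-projection}) then factors the exponential data out of $f_{!}$, once integrability is known, so it suffices to prove the theorem for $\varphi\in\mathcal{C}^{\leq d}(Y)$.

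Next, apply Denef--Pas cell decomposition (a standard consequence of Theorem \ref{QE}) to partition $X$ into finitely many definable cells whose valued-field coordinates are described, one at a time, by a center $c$, a valuation $\alpha$ and an angular component $\xi$, over a residue-field/value-group base. The Fubini axiom factors the $f$-integration into successive integrations along one valued-field variable, reducing the problem to the single-variable case. In this base case the cell has the form appearing in Axiom \ref{volume-boule}; after a further cell refinement $\ord(\mathrm{Jac}\,f)$ can be arranged to be constant on each piece, with the image of a ball of radius $\alpha$ being a ball of radius $\alpha+\ord(\mathrm{Jac}\,f)$. Applying Axiom \ref{volume-boule} on both sides yields
$$f_{!}([1_{X}])=\mathbb{L}^{\ord(\mathrm{Jac}\,f)\circ f^{-1}}\mathbb{L}^{-\alpha-1},$$
the desired formula at the level of characteristic functions. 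Multiplication by pulled-back constructible functions is handled by the projection formula, and lower-dimensional contributions vanish after passage to $C^{d}$. Reassembling the cells by Fubini and reintroducing the exponential factors via Step 1 completes the proof.

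The hardest step is the one-variable computation: one must verify that, after sufficiently fine cell decomposition, $f$ is essentially affine on each cell with constant Jacobian order, so that the image is again a cell to which Axiom \ref{volume-boule} applies directly; this uses the full strength of the Denef--Pas cell decomposition together with Corollary \ref{fini} to control how the Jacobian order and the center $c$ vary. The remaining steps are formal: the reductions to characteristic functions and to the non-exponential setting are tensorial, Fubini handles the induction on dimension, and integrability propagates through the axioms of the push-forward functor.
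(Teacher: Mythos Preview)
The paper does not prove this theorem: it is stated as Theorem 5.2.1 of \cite{CluLoe10a} and quoted without proof, as one of the axioms/building blocks of the Cluckers--Loeser integration machinery that the paper imports in Section \ref{rappelCL}. There is therefore no proof in the paper to compare your proposal against.

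That said, your sketch is a plausible outline of how the result is actually established in \cite{CluLoe08a} (non-exponential case) and then extended in \cite{CluLoe10a} (exponential case): reduction via the tensor decomposition and projection formula to $\mathcal{C}(Y)$, then cell decomposition and Fubini to reduce to relative dimension one, then the volume-of-balls axiom. The step you flag as hardest is indeed the substantive one, and your description understates it: one needs the structure theory of definable bijections in one valued-field variable (the ``bicell'' or ``Jacobian property'' results of \cite{CluLoe08a}, \S 7--8) to guarantee that after refinement $f$ restricts on each cell to a map whose image is again a cell with the correct shift in radius. Saying ``$f$ is essentially affine with constant Jacobian order'' is the right intuition but not a proof; this is several pages of work in the source. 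Your appeal to Corollary~\ref{fini} here is misplaced---that corollary controls maps \emph{from} residue or value-group sorts, not the variation of $\ord(\mathrm{Jac}\,f)$ on valued-field cells.
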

  We give some ideas of the construction of this pushforward and refer to \cite{CluLoe08a} and \cite{CluLoe10a} and to the surveys \cite{CluHalLoe11}, \cite{CluLoe05a} and \cite{GorYaf09} for further details. For instance, we fix a base $S$, we consider a definable morphism $f:Y\ra S$ where $Y$ is a subassignment of some $h[m,n,r]$ and we denote by $\Gamma_f$ the graph of $f$. By fonctoriality the morphism $f_!$ is the composition $p_! \circ i_!$ where $i : Y\ra \Gamma_f$ and $p:\Gamma_f \ra S$ are the canonical injection and projection. Thus, it is enough to know how to construct the push-forward morphisms for injections and projections. The case of definable injection is done using extension by zero of constructible functions, and an adjustment with a Jacobian to match the induced measures.
  Using the axiom of the volume of balls and the change of variables formula we observe that the construction of the push-forward morphism for a projection is done by induction on the valued field dimension. For instance, $\Gamma_f$ can be seen as a definable subassignment of $S'[1,0,0]$ where $S'$ is the definable set $S[m-1,n,r]$ and the push-forward $p_{!}$ will be the composition $p^{(m-1)}_!\circ  \pi_{!}$ where $\pi : \Gamma_f \ra S'$ and $p^{(m-1)} : S' \ra S$ are canonical projections. The construction does not depend on the order of such projections and the main tool is the  cell decomposition theorem restated below. Once the valuative dimension is zero we have to define a push-forward of a projection from some $S[0,n',r']$ to
  $S$. This is done using the indepedance between the residue field and the value group, coming from Theorem \ref{QE}. The push-forward along residue variables is simply the push-forward induced by composition at the level of Grothendieck ring cf. \cite{CluLoe08a} \S 5.6. The integration along $\mathbb Z$-variables corresponds to summing over the integers, cf. \cite{CluLoe08a} \S 4.5.

  \begin{rem}[On the projection axiom] \label{remproj}
	  In \cite[Proposition 13.2.1,(2)]{CluLoe10a}, it is proved that for a definable morphism $f:X\to S$, $\alpha$ in $\mathcal C(S)$,
	  and $\beta$ in $I_{S}C(X)$, the constructible function $f^{*}(\alpha)\beta$ is $S$-integrable and 
	  $$f_{!}(f^{*}(\alpha)\beta)=\alpha f_{!}(\beta).$$
	  This result can be extend to the exponential case using the definition given by the formula \ref{defexp} and applying the point 
	  $(2)$ of Proposition 13.2.1 in \cite{CluLoe10a}. 
  \end{rem}
  \subsection{Cell decomposition}  \label{celldecomppart}

  Let $C$ be a definable subassignment in $\Def_{k}$. Let $\alpha : C \ra \Z$, $\xi : C\ra h[0,1,0]\setminus \{0\}$ and $c:C\ra h[1,0,0]$ be definable morphisms.\\

  $\bullet$ The cell $Z_{C,c,\alpha,\xi}$  with basis $C$, center $c$, order $\alpha$ and angular component $\xi$, is
  $$
  Z_{C,c,\alpha,\xi} =
  \left\{
  (y,z) \in C[1,0,0]\:
  \begin{array}{|l}
   \ord(z-c(y))=\alpha(y) \\
   \ac(z-c(y))=\xi(y)
  \end{array}
  \right\}
  $$
Note that this definable set is a family of balls $B(c(y)+\xi(y)t^{\alpha(y)},\alpha(y)+1)$ parametrized by the base $C$. The axiom \ref{volume-boule} gives the push-forward morphism corresponding to the projection of this cell on its base $C$, that is, integration in the fibers of this projection map.\\

$\bullet$ The cell $Z_{C,c}$ with basis $C$ and center $c$ is
$$
  Z_{C,c} =
  \left\{
  (y,z) \in C[1,0,0]\mid z=c(y)
  \right\}.
  $$
The change of variables formula gives in particular, the push-forward morphism corresponding to the projection of that cell on its base.
More generally, a definable subassignment $Z$ of $S[1,0,0]$ for some $S$ in $\Def_{k}$ is called a \emph{$1$-cell} or a \emph{$0$-cell} if there exists a definable isomorphism
$$\lambda : Z\ra Z_{C,c,\alpha,\xi}\subset S[1,s,r]\:\:\mathrm{or}\:\: \lambda : Z\ra Z_{C,c}\subset S[1,s,0],$$
called \emph{presentation} of the cell $Z$,
where the base $C$ is contained in $S[0,s,r]$ and such that the morphism $\pi \circ \lambda$  is the identity on $Z$ with $\pi$ the projection to $S[1,0,0]$.

Let us state  a variant of Denef-Pas Cell Decomposition theorem \cite{Pas89}, Theorem 7.2.1 of \cite{CluLoe10a}, that will be used in the proof of the definable compactness Proposition \ref{definablecompactness}.

\begin{thm}[Cell decomposition]  \label{celldecompthm}
 Let $X$ be a definable subassignment of $S[1,0,0]$ with $S$ in $\Def_{k}$.
 \begin{enumerate}
  \item The definable set $X$ is a finite disjoint union of cells.
  \item For every $\varphi$ in $\mathcal C(X)$ there exists a finite partition of $X$ into cells $Z_{i}$ with presentation
  $(\lambda_{i},Z_{C_{i}})$ and
  $\varphi_{\mid Z_{i}}=\lambda_{i}^{*}p_{i}^{*}(\psi_{i}),$ with
  $\psi_{i}$ in $\mathcal C(C_{i})$ and $p_{i}:Z_{C_{i}} \ra C_{i}$ the projection. Similar statements hold for $\varphi$
  in $\mathcal P(X)$ and in $K_{0}(\RDef_{X})$.
 \end{enumerate}
\end{thm}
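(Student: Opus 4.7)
The plan is to derive the statement from quantifier elimination (Theorem~\ref{QE}) together with a careful manipulation of $\ord$ and $\ac$ of polynomials in the extra valued field variable. I treat $(1)$ first and then deduce $(2)$ by refinement.

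For $(1)$, write $X\subseteq S[1,0,0]$ as $h_\phi$ for some formula $\phi(y,z)$, with $y$ running over $S$ and $z$ over the single extra $\mathrm{Val}$ variable. By Theorem~\ref{QE}, $\phi$ is equivalent in $H_{\ac,0}$ to a finite disjunction of conjunctions of the form $\psi(\ac f_1(y,z),\dots,\ac f_k(y,z),\overline y)\wedge \eta(\ord f_1(y,z),\dots,\ord f_k(y,z),\widetilde y)$, where the $f_i$ are polynomials in $z$ whose coefficients are terms in the $S$-variables, $\overline y$ are residue-field variables coming from $S$ and $\widetilde y$ value-group variables from $S$. So it suffices, up to finite disjunction and the obvious refinement of the base $S$, to construct a cell decomposition of $X$ adapted to a single finite family $(f_i)_{1\le i\le k}$ of polynomials in $z$, meaning a partition into finitely many cells on each of which both $\ord f_i(y,\cdot)$ and $\ac f_i(y,\cdot)$ become simple expressions in $\ord(z-c(y))$ and $\ac(z-c(y))$.

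The construction of the adapted decomposition proceeds by induction on $\sum_i \deg_z f_i$. For a single polynomial $f(y,z)=\sum_j a_j(y)z^j$ the key observation is that, over a suitable definable partition of the base, one can find a definable center $c(y)$ such that after the change of variable $z\mapsto z-c(y)$ the dominant monomial in $f$ on each piece is of the form $a_{j_0}(y)(z-c(y))^{j_0}$; both $\ord f$ and $\ac f$ then read off from $\ord(z-c(y))$ and $\ac(z-c(y))$. Concretely, using Hensel's lemma (available since the residue field has characteristic zero) applied to $f$ or one of its iterated derivatives, together with the independence of residue field and value group sorts, one either splits off a root $c(y)$ of $f$ (or of a derivative), reducing the degree, or shows that $f$ is already dominated by a single monomial on the piece. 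Combining, doing this simultaneously for the finitely many $f_i$, and partitioning the remaining data along the finitely many possible patterns of $\psi$ and $\eta$ yields the required finite disjoint union of cells $Z_{C,c,\alpha,\xi}$ and $Z_{C,c}$.

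For $(2)$, apply the cell decomposition of $(1)$ to refinements adapted to the data defining $\varphi$. If $\varphi=[\pi_Y]\in K_0(\RDef_X)$ with $Y\subseteq X[0,n,0]$, one applies $(1)$ to the image of $Y$ in $X$, and then over each cell $Z_i$ chooses $\psi_i\in K_0(\RDef_{C_i})$ obtained by the quantifier elimination applied to the defining formula of $Y$, so that $[\pi_Y]_{\mid Z_i}=\lambda_i^* p_i^*\psi_i$. If $\varphi\in\mathcal P(X)$, it is a polynomial expression in definable functions $X\to\Z$ and symbols $\L^\beta$ for $\beta:X\to\Z$ definable, so, applying $(1)$ to each such $\beta$ and to the graphs of the integer-valued definable functions involved (again by Theorem~\ref{QE}), one refines $X$ into cells on which all these data pull back from the base, and this gives the statement in $\mathcal P(X)$. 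The statement in $\mathcal C(X)$ follows by taking the tensor product of both refinements since $\mathcal C(X)=K_0(\RDef_X)\otimes_{\mathcal P^0(X)}\mathcal P(X)$.

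The main obstacle is the inductive step in the second paragraph: controlling $\ord f(y,z)$ and $\ac f(y,z)$ by a single monomial after a definable translation, uniformly in the parameter $y$. This is exactly the technical heart of Denef--Pas cell decomposition; once achieved, part $(2)$ is obtained by straightforward refinement of partitions using the same tools.
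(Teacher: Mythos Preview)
The paper does not give its own proof of this theorem: it is stated as a background result, attributed to Denef--Pas \cite{Pas89} and to Theorem~7.2.1 of Cluckers--Loeser \cite{CluLoe10a}, and is only recalled here because it is used later (notably in the proof of Proposition~\ref{definablecompactness}). So there is no ``paper's own proof'' to compare your proposal against.

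That said, your outline is faithful to the standard Denef--Pas argument that the paper is citing: reduce via quantifier elimination to controlling $\ord f_i$ and $\ac f_i$ for finitely many polynomials in the extra valued-field variable, then run an induction on total degree in which Hensel's lemma produces definable centers that isolate a dominant monomial, and finally refine for part~(2) according to the generators of $\varphi$. You correctly identify the inductive monomialization step as the technical core and defer its details; this is appropriate for a result being quoted from the literature. One small caution: your treatment of the $K_0(\RDef_X)$ case in part~(2) is a bit loose---the point is not just to decompose the image of $Y$ in $X$, but to apply part~(1) with $S$ replaced by $S[0,n,0]$ so that the extra residue-field variables of $Y$ are absorbed into the base of the cells, after which the class $[\pi_Y]$ restricted to each cell visibly pulls back from $C_i$. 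With that adjustment your sketch matches the argument in the cited references.
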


\subsection{A compatibility relation between pull-back and push-forward}

\begin{prop}  \label{compatibilite-pull-back-push-forward}
	Let $S$ be a definable set in $\Def_k$ and $g : W \ra W'$ be a definable morphism in $\Def_S$. Let $X$ be a definable set in $\Def_S$. We denote by $\pi_W$ the projection from $W\times_S X$ to $W$ and by $\pi_{W'}$ the projection from $W' \times_S X$ to $W'$. Let $\varphi$ be a constructible exponential function in $\mathcal C(W'\times_S X)^{exp}$.
	\begin{enumerate}
		\item \label{thm:integrability-condition} If $[\varphi]$ is $\pi_{W'}$-integrable then 
			$[(g \times Id_X)^*\varphi]$ is $\pi_W$-integrable.
			Furthermore, if $g$ is onto then this implication is an equivalence.
		\item \label{thm:equality} If $[\varphi]$ satisfies the condition (\ref{thm:integrability-condition}) then  
			\begin{equation}
				\pi_{W!}\left[(g\times Id_X)^*\varphi \right] = g^*(\pi_{W'!} [\varphi]).
			\end{equation}
	\end{enumerate}
\end{prop}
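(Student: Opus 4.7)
The commutative square
\[
\begin{array}{ccc}
W \times_{S} X & \xrightarrow{\,g \times \Id_{X}\,} & W' \times_{S} X \\
{\scriptstyle \pi_{W}}\downarrow & & \downarrow{\scriptstyle \pi_{W'}} \\
W & \xrightarrow{\,g\,} & W'
\end{array}
\]
is Cartesian in $\Def_{S}$, so the statement is a base-change formula. My plan is to reduce to elementary cases using Fubini and cell decomposition, and then verify each case from the defining axioms of the motivic integral. Since both assertions are additive in $\varphi$ and compatible with the tensor product (\ref{defexp}), the definition of $\mathcal C(W'\times_S X)^{\exp}$ lets me treat the motivic factor in $K_{0}(\RDef^{\exp})$ and the Presburger factor in $\mathcal P$ in parallel.

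First, by Fubini, the integration $\pi_{W'!}$ is computed as an iterated push-forward along a sequence of elementary projections that forget one coordinate of $X$ at a time: a $\mathrm{Val}$-variable, a $\mathrm{Res}$-variable, or an $\mathrm{Ord}$-variable. The fiber product $W \times_{S} X \to W' \times_{S} X$ pulls such a tower of projections back to the analogous tower over $W$. Since composition of pushforwards on both sides commutes with pull-back provided each elementary step does, it suffices to prove the proposition in the three cases where $X$ is $Y[1,0,0]$, $Y[0,1,0]$, or $Y[0,0,1]$ for some $Y$ in $\Def_{S}$ and $\pi_{W'}$ is the corresponding coordinate projection.

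The residue-field case is essentially formal: the push-forward is defined on the $\RDef^{\exp}$-factor by composition of definable morphisms (\S 5.6 of \cite{CluLoe08a}), and base change commutes with it because $W \times_{W'} (W' \times_{S} Y[0,1,0]) = W \times_{S} Y[0,1,0]$ as objects of $\RDef^{\exp}_{W}$. The value-group case amounts to summation of a Presburger function over $\Z$ (\S 4.5 of \cite{CluLoe08a}), and since $g^{*}$ acts term-by-term on the summand while leaving the summation index untouched, the formula is immediate; the integrability (respectively summability) condition is a Presburger condition on the base that transparently pulls back, and is equivalent to the condition on $W'$ when $g$ is onto. The main work is in the valued-field case $X = Y[1,0,0]$. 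Here I apply Cell Decomposition (Theorem \ref{celldecompthm}) to $W' \times_{S} Y[1,0,0]$ adapted to $\varphi$, write $\varphi$ as a finite sum of contributions on cells $Z_{C,c,\alpha,\xi}$ and $Z_{C,c}$, and pull everything back by $g \times \Id$. Since $c$, $\alpha$, $\xi$ are definable morphisms from $C \subset W'\times_S Y[0,s,r]$, their pull-backs $g^{*}c$, $g^{*}\alpha$, $g^{*}\xi$ define cells over $g^{*}C$ with the same structure, and the pulled-back cell is exactly $Z_{g^{*}C,\,g^{*}c,\,g^{*}\alpha,\,g^{*}\xi}$. Axiom \ref{volume-boule} gives $\pi_{W!}[1_{g^{*}Z}] = \mathbb L^{-g^{*}\alpha-1}[1_{g^{*}C}] = g^{*}\bigl(\mathbb L^{-\alpha-1}[1_{C}]\bigr) = g^{*}\pi_{W'!}[1_{Z}]$, and similarly the relative-balls-of-large-volume axiom and the change of variables formula (Theorem \ref{cov}) yield $g^{*}$-equivariant expressions on each cell of type $Z_{C,c,\alpha,\xi}$ with exponential factor, respectively $Z_{C,c}$. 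Summing the contributions gives the desired identity, and integrability on the left is equivalent to summability on the base $C$ of an explicit Presburger expression, a condition that pulls back cleanly to $g^{*}C$ and, when $g$ is surjective, is equivalent there.

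The main obstacle is the bookkeeping in this last step: one must check that cell decompositions adapted to $\varphi$ pull back under $g \times \Id_{X}$ to cell decompositions adapted to $(g\times \Id_{X})^{*}\varphi$, with centers, orders and angular components pulled back in the right way. This is granted by the fact that cells are defined by formulas in definable functions from the base, so that precomposition with $g$ preserves the cell format; and that the axioms for the push-forward, as well as the change of variables formula of Theorem \ref{cov}, are expressed in terms of these same definable functions, so that $g^{*}$ passes through each formula. Once this is established, the converse of (\ref{thm:integrability-condition}) for surjective $g$ follows because at every reduction step the integrability of $(g\times \Id_X)^{*}\varphi$ translates to the integrability of $\varphi$ over the image $g(W) = W'$.
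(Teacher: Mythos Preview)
Your proposal is correct and follows the same strategy the paper sketches: reduce by Fubini and induction on the dimension to the elementary one-variable projections, then use cell decomposition in the valued-field case and verify that the defining axioms of the push-forward (volume of balls, change of variables, residue and value-group integration) are $g^{*}$-equivariant. The paper itself gives only this outline and defers the full verification to \cite{CR}; your write-up is a reasonable expansion of exactly that outline.
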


The $p$-adic analogue of Proposition \ref{compatibilite-pull-back-push-forward} holds naturally, since evaluation in points determines both sides of the equality in the $p$-adic case.

\begin{proof}
	This formula can be easily checked at the level of evaluation of points and can be proved at the level of the ring of constructible exponential functions by induction on the dimension, using cell decompositions and the construction of the motivic integral. A complete proof is given in \cite{CR}.
\end{proof}

\subsection{Relative integration}\label{rel:int}

All of the previous notions can be done relatively to a parameter space, as is done throughout in \cite{CluLoe08a} and in \cite{CluLoe10a}. To this end, one works with $\Def_P$ for a definable subassignment $P$, with relative dimensions (relative to $P$) for objects of $\Def_P$ and with relative Jacobians for isomorphisms in $\Def_P$.

We fix some notations for the relative setting. Let $p : X \rightarrow P$ be a morphism in $\Def_k$, with all
fibers of dimension $d$. We denote by $\mathcal C_{P}^{\leq d}(X)^{\:\exp}$ the ideal of $\mathcal C(X)$ generated by the characteristic functions $1_{X'}$ of subassignments $X'$ of $X$ of relative dimension at most $d$. One forms $C_{P}(X)^{\:\exp}$ again as
$$
\oplus_{d\in \mathbb N}\mathcal C_{P}^{\leq d}(X)^{\:\exp}/\mathcal C_{P}^{\leq d-1}(X)^{\:\exp}.
$$
In particular, if $p$ is the identity $\text{id}:P\to P$ then $C_{P}(P)^{\:\exp}=\mathcal C(P)^{\:\exp}$.
One writes similarly ${\rm I}_{P} C_{P} ( X )^{\rm exp}$ for the constructible exponential functions relative to $P$ which are relatively integrable (namely relatively integrable for the structure morphism $p:X\to P)$.\\

We recall some notations of Section 7 of \cite{CluLoe10a}. Let $p : X \rightarrow P$ be a morphism in $\Def_k$, with all
fibers of dimension
$d$. We denote by $\I_{P} (X)^{\rm exp}$ or by $\I_{p} (X)^{\rm exp}$ the $\mathcal C (P)^{\rm exp}$-module of functions
$\varphi$ in $\mathcal C (X)^{\rm exp}$ whose class $[\varphi]$ in
$$
C_{P}^d (X
)^{\rm exp} := \mathcal C_{P}^{\leq d}(X)^{\:\exp}/\mathcal C_{P}^{\leq d-1}(X)^{\:\exp}
$$
lies in ${\rm I}_{P} C_{P} ( X )^{\rm exp}$.
For $\varphi$ in ${\rm I}_{P} C_{P} ( X )^{\rm exp}$ we write $\mu_{P}
(\varphi)$ or $\mu_{p} (\varphi)$, or $p_!(\varphi)$, to denote the function in $\mathcal C (P)^{\rm exp}$ which is the relative integral (in relative dimension $d$) in the fibers of $p$.

\subsection{Fourier transform, convolution, Schwartz-Bruhat functions}\label{sec:SB}
In this subsection we recall constructions of Fourier transform and convolution product from \S 7 of \cite{CluLoe10a}. 
We use notations from \ref{rel:int}.

\subsubsection{Fourier transform}  \label{Fourier}

\begin{notation}
For an integer $m$, we denote by $V_{x}$ and $V_{y}$ the definable set $h[m,0,0]$ with $x$ and $y$ variables. We denote by $V_{(x,y)}$ the product $V_x \times V_y$ and by $p_x$ and $p_y$ the canonical projections from $V_{(x,y)}$ to $V_x$, resp.~$V_y$.
For $P$ a definable set, we still denote by $p_x$ and $p_y$ the canonical projections from $P\times V_{(x,y)}$ to $P\times V_{x}$ and $P\times V_y$.
We extend this notation also for other products of this type.
\end{notation}
We consider the constructible exponential function in $\mathcal C(P\times V_{(x,y)})^{\exp}$
$$E(x\mid y)=E\left(\sum_{i=1}^{m}x_i y_i\right).$$
For any constructible exponential function $\varphi$ in $\I_P(P\times V_x)^{\exp}$, the constructible exponential function
$p_{x}^{*}(\varphi)E(x\mid y)$ in $\mathcal C(P\times V_{(x,y)})^{\:\exp}$ is $p_Y$-integrable and as usual the \emph{Fourier transform} is defined as the $\mathcal C(P)^{\exp}$-linear application
$$
\mathcal F\colon \begin{cases} \I_P(P\times V_x)^{\exp} \longrightarrow  \mathcal C(P \times V_y)^{\exp}\\  \varphi \longmapsto  \mathcal F(\varphi) =p_{y!}(p_{x}^{*}(\varphi)E(x\mid y)).
\end{cases}
$$
\begin{notation}
Instead of writing $p_{x}^{*}(\varphi)$ we will sometimes write  $\varphi(p,x)$, with $p$ running over $P$. Instead of writing
$p_{y!}$ we will sometimes write  $\int_{x \in V_x}$.
With this notation we have
$$\mathcal F(\varphi) : (p,y) \mapsto \int_{x\in V_x}\varphi(p,x)E(x\mid y)dx.$$
\end{notation}

\begin{example}  \label{Fourier-indicatrice}
 Consider a definable function $\alpha : P \ra \mathbb Z$, the ball
 $$B_{\alpha}=\{(p,x)\in P\times V_x \mid \min \ord x_i \geq \alpha(p)\},$$
 and its characteristic function $1_{B_{\alpha}}$.
 Then, by Proposition 7.3.1 of \cite{CluLoe10a}, we have $\mathcal F(1_{B_{\alpha}})=\L^{-m \alpha}1_{B_{-\alpha+1}}.$
\end{example}

\subsubsection{Convolution}
We denote by $x+y$ the morphism from $P\times V_{(x,y)}$ to $P\times V_z$, which maps $(p,x,y)$ to $(p,x+y)$.

\begin{defn} Let $f$ be in $\I_P(P\times V_x)^{\exp}$ and $g$ be in $\I_P(P\times V_x)^{\exp}$. The constructible exponential function 
	$p_x^{*}(f)p_y^{*}(g)$ lies in  $\I_{x+y}(P\times V_{(x,y)})^{\exp}$ and the \emph{convolution product} of $f$ and $g$ is the constructible exponential function in $\mathcal C(P\times V_z)^{\exp}$
$$f*g:=\mu_{x+y}(p_x^{*}(f)p_y^{*}(g)).$$
\end{defn}
\begin{rem}  \label{convolution-remark}
We consider the definable bijection $h$ from $P\times V_{(z,y)}$ to $P\times V_{(x,y)}$ which maps $(p,z,y)$ on $(p,z-y,y)$. The order of the relative Jacobian over $P$ of this map is equal to 0. Thus,  by the change of variables formula we have
 $$p_x^{*}(f)p_y^{*}(g)=\mu_h(h^{*}(p_x^{*}(f)p_y^{*}(g))).$$
 By the equality $p_z=(x+y) \circ h$ and functoriality of the construction we deduce
 $$f*g = \mu_{p_z}( h^{*}(p_x^{*}(f)p_y^{*}(g)) ).$$
 Using notation for integrals as in \ref{Fourier} we have the usual convolution formula:
 $$f*g = (p,z) \mapsto \int_{y \in V_y}f(p,z-y)g(p,y)dy = \int_{x \in V_x}f(p,x)g(p,z-x)dx.$$
\end{rem}

We denote by $V$ the definable set $h[m,0,0]$ for $m$ a positive integer.
\begin{prop}[\cite{CluLoe10a}, Proposition 7.4.2]\label{conv-lin}
The convolution product $(f,g)\mapsto f*g$ yields a $\mathcal C(P)^{\exp}$-linear map
$$
\I_P(P\times V)^{\exp} \times \I_P(P\times V)^{\exp} \to \I_P(P\times V)^{\exp}
$$
 and it endows
$\I_P(P\times V)^{\:\exp}$ with an associative and commutative law.
\end{prop}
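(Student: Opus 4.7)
The proposition packages four claims: well-definedness (that $f * g$ actually lies in $\I_P(P\times V)^{\exp}$, not merely in $\mathcal C(P\times V)^{\exp}$), $\mathcal C(P)^{\exp}$-bilinearity, commutativity, and associativity. The plan is to treat them in that order, leaning on the Fubini axiom, the projection formula (Axiom \ref{axiom-projection}), the change of variables formula (Theorem \ref{cov}), and Proposition \ref{compatibilite-pull-back-push-forward} as the four workhorses.

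To see that $p_x^*(f)p_y^*(g)$ belongs to $\I_{x+y}(P\times V_{(x,y)})^{\exp}$ and that its push-forward lies in $\I_P(P\times V_z)^{\exp}$, I would invoke the change of variables of Remark \ref{convolution-remark}: the definable isomorphism $h:P\times V_{(z,y)}\to P\times V_{(x,y)}$, $(p,z,y)\mapsto(p,z-y,y)$, has relative Jacobian of order zero over $P$, so integrability of $p_x^*(f)p_y^*(g)$ along $x+y$ is equivalent to integrability of $h^*(p_x^*(f)p_y^*(g))=f(p,z-y)g(p,y)$ along $p_z$. Fubini applied to the factorization $P\times V_{(z,y)}\to P\times V_z\to P$ reduces this in turn to the $P$-integrability of $g$ (giving the inner integral as an element of $\mathcal C(P\times V_z)^{\exp}$) followed by its product with the $P$-integrable function $f$, which is $P$-integrable by Axiom \ref{axiom-projection} applied to the projection $P\times V_z\to P$.

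Bilinearity is then formal: pull-back respects sums and $\mathcal C(P)^{\exp}$-scalars, ring multiplication on $\mathcal C(P\times V_{(x,y)})^{\exp}$ is bilinear, and the push-forward $\mu_{x+y}$ is additive and absorbs scalars pulled back from $P$ via the projection formula. Commutativity follows from the definable automorphism $\sigma$ of $P\times V_{(x,y)}$ that swaps the two copies of $V$: it has trivial relative Jacobian, the sum map $x+y$ is $\sigma$-invariant, and $\sigma^*(p_x^*(f)p_y^*(g))=p_x^*(g)p_y^*(f)$, so Theorem \ref{cov} combined with Proposition \ref{compatibilite-pull-back-push-forward} yields $f*g=g*f$. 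For associativity, I would work on $P\times V_x\times V_y\times V_w$ with total sum $s:(p,x,y,w)\mapsto(p,x+y+w)$ and identify both $(f*g)*h$ and $f*(g*h)$ with $\mu_s(p_x^*(f)p_y^*(g)p_w^*(h))$. For $(f*g)*h$, the change of variables $(p,x,y,w)\mapsto(p,x,y,x+y+w)$ of Jacobian of order zero rewrites the double convolution as a two-step iterated integral, which Fubini collapses to the single triple integral over $s$; the symmetric computation produces the same expression for $f*(g*h)$.

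The main obstacle will be the bookkeeping at the graded level: one must verify that all intermediate functions belong to the correct $\I_P C_P^{\leq d}(\cdot)^{\exp}$ with the appropriate relative dimension, so that Fubini and the projection formula genuinely apply as stated and not merely up to a lower-dimensional error. Beyond this dimensional accounting, the four identities reduce to classical symmetry manipulations of iterated integrals, each justified by a single application of Fubini, of a change of variables with Jacobian of order zero, or of the projection formula.
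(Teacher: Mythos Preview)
The paper does not give its own proof of this proposition: it is quoted verbatim as \cite[Proposition 7.4.2]{CluLoe10a} and used as a black box, so there is nothing to compare your argument against within this text. Your outline is essentially the standard one and is the natural way to reconstruct the cited proof from the axioms recalled in Section~\ref{rappelCL}.

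One point of the well-definedness step deserves tightening. You write that Fubini along $P\times V_{(z,y)}\to P\times V_z\to P$ reduces matters to the $P$-integrability of $g$ followed by multiplication by $f$, but the inner integral is $\int_{V_y} f(p,z-y)g(p,y)\,dy$ and $f(p,z-y)$ is \emph{not} a pull-back from $P\times V_z$, so the projection formula does not apply directly in the order you suggest. The clean route is to work first on $P\times V_{(x,y)}$: Proposition~\ref{compatibilite-pull-back-push-forward} gives $p_y^{*}(g)\in \I_{P\times V_x}(P\times V_{(x,y)})^{\exp}$, and Remark~\ref{remproj} then shows $p_x^{*}(f)\,p_y^{*}(g)$ is $(P\times V_x)$-integrable with push-forward $f\cdot \pi_P^{*}(\mu_P(g))\in \I_P(P\times V_x)^{\exp}$; Fubini now gives $p_x^{*}(f)p_y^{*}(g)\in \I_P(P\times V_{(x,y)})^{\exp}$. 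Transporting by $h$ and applying Fubini in the direction ``$P$-integrable $\Rightarrow$ $(P\times V_z)$-integrable'' yields the $(x+y)$-integrability you need, together with $f*g\in \I_P(P\times V_z)^{\exp}$. With this adjustment, your treatment of bilinearity, commutativity via the swap, and associativity via the triple sum map is correct.
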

\begin{prop}[\cite{CluLoe10a}, Proposition 7.4.3]
If $f$ and $g$ are two functions in $\I_P(P \times V)^{\exp}$, then we have
$$\mathcal F(f*g)=\mathcal F(f)\mathcal F(g).$$
\end{prop}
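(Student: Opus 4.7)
The plan is to adapt the classical computation showing that Fourier transform turns convolution into pointwise multiplication: I would unfold both sides as iterated motivic integrals, swap the order of integration via Fubini, change variables from $(y,z)$ to $(x,y)$ with $x = z - y$, exploit the multiplicativity of the universal character $E$, and finally apply the projection formula to split the resulting double integral into a product.

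First, using Remark~\ref{convolution-remark} together with the definition of $\mathcal F$, I would write
\[
\mathcal F(f*g)(p,\eta) \;=\; \int_{z \in V_z}\!\Big(\int_{y \in V_y} f(p,z-y)\, g(p,y)\, dy\Big)\, E(z \mid \eta)\, dz.
\]
Proposition~\ref{conv-lin} guarantees that $p_x^{*}(f)\, p_y^{*}(g)$ lies in $\I_{x+y}(P \times V_{(x,y)})^{\exp}$, and multiplication by $E(z \mid \eta)$ preserves integrability (it amounts to tensoring with an element of $K_{0}(\RDef^{\exp})$). These conditions let me apply Fubini to rewrite the expression as a joint integral of $f(p,z-y)\, g(p,y)\, E(z \mid \eta)$ over $V_y \times V_z$, fibered over $P \times V_\eta$.

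Next, I would perform the definable change of variables $(y,z) \mapsto (x,y)$ with $x := z - y$, which is an isomorphism in $\Def_{P \times V_\eta}$ whose Jacobian has order zero, so Theorem~\ref{cov} preserves the integral. The integrand then becomes $f(p,x)\, g(p,y)\, E((x+y) \mid \eta)$. By relation $(R5)$, or equivalently by the interpretation of $E$ as a universal additive character, one obtains in $\mathcal C(P \times V_x \times V_y \times V_\eta)^{\exp}$ the factorization
\[
E\bigl((x+y) \mid \eta\bigr) \;=\; E(x \mid \eta)\, E(y \mid \eta).
\]
The integrand now splits as a function of $(p,x,\eta)$ times a function of $(p,y,\eta)$, and a final use of Fubini combined with the projection formula (Axiom~\ref{axiom-projection}) separates the double integral into
\[
\Big(\int_{x \in V_x} f(p,x)\, E(x \mid \eta)\, dx\Big)\Big(\int_{y \in V_y} g(p,y)\, E(y \mid \eta)\, dy\Big) \;=\; \mathcal F(f)(p,\eta)\, \mathcal F(g)(p,\eta),
\]
which is the claimed identity.

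The main obstacle I foresee is not any single step but the bookkeeping of integrability at each stage: I have to verify that all intermediate functions lie in the appropriate $\I$-subgroups so that Fubini, Theorem~\ref{cov}, and Axiom~\ref{axiom-projection} remain applicable, and that the two successive uses of Fubini (once to join, once to split) are legitimate. This should follow routinely from Proposition~\ref{conv-lin} together with the stability of integrability under multiplication by characters of the form $E(\cdot)$, but it is the only place where real care is needed.
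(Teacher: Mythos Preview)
The paper does not prove this proposition; it is simply quoted from \cite{CluLoe10a} without argument. Your proposal is correct and follows exactly the standard route one would expect (and indeed the one used in \cite{CluLoe10a}): unfold the definitions, apply Fubini, perform the measure-preserving change of variables $x=z-y$, use the multiplicativity of $E$ coming from relation (R5), and finish with the projection formula. Your own caveat about integrability bookkeeping is accurate but not a gap: Proposition~\ref{conv-lin} and the fact that multiplication by $E(\cdot)$ stays within the exponential framework (equation~(\ref{defexp})) provide exactly what is needed.
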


\subsubsection{Schwartz-Bruhat functions}
In the $p$-adic setting (see for instance Chapter 7 of \cite{Igusabook}), a Schwartz-Bruhat function is a locally constant function with compact support. In particular such a  function has a bounded support and is constant on balls of sufficiently large (valuative) radius $r$. In the motivic setting Cluckers and Loeser give \S 7.5 of  \cite{CluLoe10a} the following definition.

\begin{defn} \label{defn:schw}
Let $V$ be the definable set $h[m,0,0]$ for a positive integer $m$. Let $P$ be a definable set. 
The set of \emph{Schwartz-Bruhat functions} on $V$ with parameters in $P$ denoted by $S_{P}(P\times V)$ is the $\mathcal C(P)^{\exp}$-module of constructible functions $\varphi$ in $\mathcal I_P(P\times V)^{\exp}$ which satisfy two conditions \\

$\bullet$ Bounded support condition. There is a definable function $\alpha^{-}(\varphi) : P \ra \mathbb Z$ such that $\varphi.1_{B_{\alpha}}=\varphi$, for all definable function $\alpha : P \ra \mathbb Z$ with $\alpha<\alpha^{-}(\varphi)$.
In this situation we will say that $\varphi$ has support in the ball $B_{\alpha^{-}(\varphi)}$.\\

$\bullet$ Locally constant condition. There is a definable function $\alpha^{+}(\varphi): P \ra \mathbb Z$ such that 
$\varphi*1_{B_{\alpha}}=\mathbb L^{-\alpha m}\varphi$, for all definable function $\alpha : P \ra \mathbb Z$
with $\alpha>\alpha^{+}(\varphi)$.
Intuitively, this condition means that $\varphi$ is constant on balls of radius $\alpha$ big enough.
\end{defn}

\begin{rem} It is easily checked that given $\alpha^-$ as in Definition \ref{defn:schw}, any other definable function $\alpha:P\to \Z$ with $\alpha\leq \alpha^{-}$ can serve instead of $\alpha^-$ in the first condition of Definition \ref{defn:schw}, and likewise, given $\alpha^+$, any definable $\alpha$ with $\alpha\geq \alpha^+$ can serve instead of $\alpha^+$ for the second condition.
\end{rem}

Cluckers and Loeser prove  in \cite[Theorem 7.5.1]{CluLoe10a} the following theorem

\begin{thm}\label{thm:four} Fourier transform induces an isomorphism
$$\mathcal F : S_{P}(P \times V) \ra S_{P}(P\times V),$$
and for every Schwartz-Bruhat function $\varphi$ in $S_{P}(P\times V)$ we have
$$(\mathcal F \circ \mathcal F)(\varphi) = \mathbb L^{-m} \check{\varphi}, $$
where $\check{\varphi}$ is $i^{*}\varphi$ with $i:P\times V\ra P\times V$ which sends $x$ to $-x$.
\end{thm}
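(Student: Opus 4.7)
The plan is to prove the theorem in two stages. First I would show that $\mathcal F$ preserves the subspace $S_P(P\times V)$, and second I would establish the Fourier inversion formula $\mathcal F\circ\mathcal F(\varphi)=\mathbb L^{-m}\check\varphi$. Once both are in hand, the bijectivity of $\mathcal F$ is automatic, since the composite $\mathbb L^{-m}\,i^{*}$ is clearly invertible, so $\mathcal F$ has $\mathbb L^{m}\,i^{*}\circ\mathcal F$ as its inverse (using that $\mathcal F$ and the involution $i^{*}$ commute).

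For preservation, I would exploit the duality that $\mathcal F$ swaps the bounded-support condition and the local-constancy condition. To get bounded support of $\mathcal F\varphi$, I start from the identity $\varphi*1_{B_{\alpha}}=\mathbb L^{-m\alpha}\varphi$ valid for $\alpha>\alpha^{+}(\varphi)$, apply $\mathcal F$, use that $\mathcal F$ turns convolutions into products, and invoke Example \ref{Fourier-indicatrice} giving $\mathcal F(1_{B_{\alpha}})=\mathbb L^{-m\alpha}1_{B_{-\alpha+1}}$. Cancelling the scalar $\mathbb L^{-m\alpha}$ leaves $\mathcal F\varphi\cdot 1_{B_{-\alpha+1}}=\mathcal F\varphi$, so $\mathcal F\varphi$ is supported in $B_{-\alpha^{+}(\varphi)+1}$. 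For the local constancy of $\mathcal F\varphi$, I argue directly from the integral formula: for $z$ in $B_{\alpha^{-}(\varphi)}$ and $u\in B_{\beta}$ with $\beta\geq -\alpha^{-}(\varphi)+1$, one has $\ord(z\mid u)\geq 1$ so $E(z\mid u)=1$, giving $\mathcal F\varphi(y-u)=\mathcal F\varphi(y)$ identically in such $u$. Integrating against $1_{B_{\beta}}(u)$ yields the convolution identity $\mathcal F\varphi*1_{B_{\beta}}=\mathbb L^{-m\beta}\mathcal F\varphi$ required by Definition \ref{defn:schw}.

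For the inversion formula, the key input is the motivic orthogonality identity $\int_{B_{\beta}}E(w\mid x)\,dx=\mathbb L^{-m\beta}1_{B_{-\beta+1}}(w)$, which is simply Example \ref{Fourier-indicatrice} read with the roles of $x$ and $w$ exchanged (the vanishing on $w\notin B_{-\beta+1}$ uses the axiom on balls of large volume to kill the relevant annular integrals of $E$). Using the bounded support of $\mathcal F\varphi$ just established, I expand
\begin{equation*}
\mathcal F\mathcal F\varphi(y)=\int_{x\in B_{\beta}}\int_{z}\varphi(z)E(z\mid x)E(x\mid y)\,dz\,dx,
\end{equation*}
swap the order of integration by Fubini, and apply the orthogonality identity to the inner $x$-integral. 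After the change of variable $u=z+y$ this collapses to $\mathbb L^{-m\beta}\int_{u\in B_{-\beta+1}}\varphi(u-y)\,du$. Choosing $\beta$ so that $-\beta+1>\alpha^{+}(\varphi)$, the local constancy of $\varphi$ means $\varphi(u-y)=\varphi(-y)$ throughout the domain, so the integral equals $\mathbb L^{-m\beta}\cdot\varphi(-y)\cdot\mathbb L^{-m(-\beta+1)}=\mathbb L^{-m}\check\varphi(y)$.

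The main obstacle is justifying Fubini and the various manipulations inside the motivic integration framework: one must check that the constructible exponential function $\varphi(z)E(z\mid x)E(x\mid y)$ lies in the correct relative integrability class so that iterated integration in either order makes sense and yields the same element of $\mathcal C(P\times V_{y})^{\exp}$. This is handled by carrying the parameter $P$ through the whole argument (working throughout in $\mathrm{Def}_{P}$ with relative dimensions as in Section \ref{rel:int}) and invoking the cell-decomposition machinery together with Proposition \ref{compatibilite-pull-back-push-forward} to compare pullbacks and pushforwards of the integrands.
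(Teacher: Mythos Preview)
The paper does not give its own proof of this theorem; it simply records it as a result of Cluckers and Loeser, citing \cite[Theorem 7.5.1]{CluLoe10a}. Your proposal is a correct reconstruction of the standard argument and is essentially the proof one finds in the cited reference: the computation of $\mathcal F(1_{B_{\alpha}})$ (Example \ref{Fourier-indicatrice}, which is \cite[Proposition 7.3.1]{CluLoe10a}) together with $\mathcal F(f*g)=\mathcal F(f)\mathcal F(g)$ yields both the stability of the Schwartz-Bruhat class under $\mathcal F$ and the inversion formula, with Fubini and the projection axiom handling the order-of-integration issues. One small bookkeeping point: in the inversion step you should make explicit that the $\beta$ used to truncate the outer $x$-integral can be taken as small as you like (since $\mathcal F\varphi$ has support in $B_{-\alpha+1}$ for every $\alpha>\alpha^{+}(\varphi)$), so the later requirement $-\beta+1>\alpha^{+}(\varphi)$ is compatible with that choice.
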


\begin{prop} \phantomsection \label{convolution}
The convolution product of Schwartz-Bruhat functions in $S_P(P\times V)$ is a Schwartz-Bruhat function.
\end{prop}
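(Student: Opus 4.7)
The plan is to verify directly the two conditions of Definition \ref{defn:schw} for $f*g$ when $f,g\in S_P(P\times V)$. Integrability, i.e.\ membership in $\I_P(P\times V)^{\exp}$, is already given by Proposition \ref{conv-lin}, which also provides associativity that will be crucial for the locally constant condition.

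\textbf{Bounded support.} Writing the convolution as
\[
(f*g)(p,z)=\int_{x\in V_x} f(p,x)\, g(p,z-x)\,dx
\]
(cf.\ Remark \ref{convolution-remark}), the integrand vanishes unless both $x\in B_{\alpha^-(f)}(p)$ and $z-x\in B_{\alpha^-(g)}(p)$. When this happens, coordinatewise,
\[
\ord z_i=\ord\bigl(x_i+(z_i-x_i)\bigr)\ge \min\bigl(\alpha^-(f)(p),\alpha^-(g)(p)\bigr),
\]
so $z\in B_{\gamma}(p)$ for the definable function $\gamma:=\min(\alpha^-(f),\alpha^-(g)):P\to \Z$. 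Since $\gamma$ is definable, any definable $\alpha\le \gamma$ yields $(f*g)\cdot 1_{B_\alpha}=f*g$. Hence one can take $\alpha^-(f*g)=\gamma$.

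\textbf{Locally constant.} For any definable $\alpha:P\to\Z$ with $\alpha\ge \alpha^+(g)$, the associativity and bilinearity of convolution supplied by Proposition \ref{conv-lin}, combined with the locally constant condition for $g$, give
\[
(f*g)*1_{B_\alpha}=f*(g*1_{B_\alpha})=f*(\mathbb L^{-\alpha m} g)=\mathbb L^{-\alpha m}(f*g).
\]
Thus one can take the definable function $\alpha^+(f*g):=\alpha^+(g)$ (symmetrically, $\alpha^+(f)$ would work as well).

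\textbf{Main obstacle.} The argument is essentially formal once we allow ourselves to manipulate convolutions as ordinary integrals. The only nontrivial point is making sure that the pointwise/integral reasoning for the support condition is legitimate in the motivic formalism; this follows from the fact that $f$ and $g$ vanish as elements of $\mathcal C(P\times V)^{\exp}$ outside their supports, so the constructible function $p_x^*(f)\,p_y^*(g)$ on $P\times V_{(x,y)}$ is supported in $B_{\alpha^-(f)}\times B_{\alpha^-(g)}$; the pushforward along $x+y$ is then, by its construction and by Proposition \ref{compatibilite-pull-back-push-forward}, supported in $B_\gamma$. No further computation is needed, and everything required is definable.
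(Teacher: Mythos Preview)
Your proof is correct and follows essentially the same route as the paper: associativity of convolution for the locally constant condition, and a support argument for the bounded support condition via the fact that $p_x^*(f)\,p_y^*(g)$ is supported in $B_{\alpha^-(f)}\times B_{\alpha^-(g)}$. The only small correction is that the formal justification for the support claim is the projection formula (Axiom~\ref{axiom-projection}), not Proposition~\ref{compatibilite-pull-back-push-forward}: one writes $(f*g)\cdot 1_{B_\gamma}=(x+y)_!\bigl(p_x^*(f)\,p_y^*(g)\cdot (x+y)^*1_{B_\gamma}\bigr)$ and observes that the extra factor $(x+y)^*1_{B_\gamma}$ is redundant on the support of $p_x^*(f)\,p_y^*(g)$, exactly as the paper does.
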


\begin{proof}
Let $\varphi$ and $\psi$ in $S_{P}(P\times V)$. 
The functions $\varphi$ et $\psi$ are integrable and the convolution product $\varphi*\psi$ is also integrable by proposition \ref{conv-lin}.
For any definable function $\gamma \geq \max(\alpha^{+}(\varphi),\alpha^{+}(\psi))$, we have
$$(\varphi*\psi)*1_{B_{\gamma}}=\varphi*(\psi*1_{B\gamma})=\varphi*(\mathbb L^{-\gamma m} \psi)=\mathbb L^{-\gamma m}\varphi*\psi$$
For any definable function $\gamma \leq \min(\alpha^{-}(\varphi),\alpha^{-}(\psi))$, by the projection axiom we obtain
$$(\varphi*\psi)1_{B_{\gamma}}= (x+y)_{!}[p_{x}^{*}(\varphi) p_{y}^{*}(\psi) (x+y)^{*}1_{B_{\gamma}}] = \varphi*\psi.$$
Indeed, $\varphi$ is supported in the ball $B_{\alpha^{-}(\varphi)}$, $\psi$ is supported in the ball $B_{\alpha^{-}(\psi)}$ and for any $x$ with $\ord x \geq \alpha^{-}(\varphi)$ and $y$ with $\ord y \geq \alpha^{-}(\psi)$ we have
$\ord x+y \geq \gamma$ impliying the equality
$$
(1_{B_{\alpha^{-}(\varphi)}} \circ p_{x}) (1_{B_{\alpha^{-}(\psi)}} \circ p_{y}) 
(x+y)^{*}1_{B_{\gamma}} = 
(1_{B_{\alpha^{-}(\varphi)}} \circ p_{x}) (1_{B_{\alpha^{-}(\psi)}} \circ p_{y}).$$
\end{proof}

\begin{prop} \phantomsection \label{productSB}
The product of Schwartz-Bruhat functions in $S_P(P\times V)$ is a Schwartz-Bruhat function in $S_P(P\times V)$.
\end{prop}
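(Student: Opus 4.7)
The plan is to reduce the statement to Proposition \ref{convolution} by Fourier duality, using that the Fourier transform interchanges multiplication and convolution.

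Given $\varphi,\psi\in S_{P}(P\times V)$, Theorem \ref{thm:four} gives $\mathcal F(\varphi),\mathcal F(\psi)\in S_{P}(P\times V)$, and Proposition \ref{convolution} then yields $\mathcal F(\varphi)\ast\mathcal F(\psi)\in S_{P}(P\times V)$. Applying $\mathcal F$ a third time, and combining the formula $\mathcal F(f\ast g)=\mathcal F(f)\mathcal F(g)$ (Proposition 7.4.3 of \cite{CluLoe10a}) with the Fourier inversion formula $\mathcal F\circ\mathcal F=\mathbb L^{-m}i^{\ast}$ of Theorem \ref{thm:four}, I compute
\begin{equation*}
\mathcal F\bigl(\mathcal F(\varphi)\ast\mathcal F(\psi)\bigr)
\;=\;\mathcal F(\mathcal F(\varphi))\cdot\mathcal F(\mathcal F(\psi))
\;=\;\mathbb L^{-2m}\,\check\varphi\cdot\check\psi
\;=\;\mathbb L^{-2m}\,i^{\ast}(\varphi\psi),
\end{equation*}
where the last equality uses that $i^{\ast}$ is a ring homomorphism on $\mathcal C(P\times V)^{\exp}$. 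By Theorem \ref{thm:four} the left-hand side belongs to $S_{P}(P\times V)$, and since multiplication by the scalar $\mathbb L^{2m}$ preserves Schwartz-Bruhat functions, $i^{\ast}(\varphi\psi)$ lies in $S_{P}(P\times V)$ as well.

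To conclude $\varphi\psi=i^{\ast}\bigl(i^{\ast}(\varphi\psi)\bigr)\in S_{P}(P\times V)$, it suffices to know that $S_{P}(P\times V)$ is stable under pull-back by the definable involution $i:(p,x)\mapsto(p,-x)$. This is immediate: $i$ fixes each ball $B_{\alpha}$, is a translation-compatible isomorphism of relative Jacobian zero, so that $i^{\ast}$ commutes with convolution and with multiplication by $1_{B_{\alpha}}$, and it preserves $P$-integrability; both the bounded-support and the locally-constant conditions of Definition \ref{defn:schw} therefore transport through $i^{\ast}$.

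The only real care needed is to keep track of the integrability hypotheses when invoking the convolution-product formula and Fourier inversion, but every intermediate function encountered along the way is Schwartz-Bruhat and hence automatically lies in $\mathcal I_{P}(P\times V)^{\exp}$, so no genuine obstacle arises. In particular, the integrability of $\varphi\psi$ itself is obtained for free as part of being Schwartz-Bruhat, without having to verify it directly from the individual integrability of $\varphi$ and $\psi$, which would have been the delicate point in a more naive approach.
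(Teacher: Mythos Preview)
Your proof is correct and follows essentially the same route as the paper: reduce to Proposition~\ref{convolution} via Fourier duality and the identity $\mathcal F(f\ast g)=\mathcal F(f)\mathcal F(g)$. The paper's version is marginally shorter, using that $\mathcal F$ is an isomorphism on $S_P(P\times V)$ to write $\varphi=\mathcal F(\varphi')$ and $\psi=\mathcal F(\psi')$ directly, so that $\varphi\psi=\mathcal F(\varphi'\ast\psi')$ is Schwartz--Bruhat without the extra detour through $i^{\ast}$.
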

\begin{proof}Let $\varphi$ and $\psi$ be in $S_{P}(P\times V)$. By the inversion formula for the Fourier transform, there are $\varphi'$ and $\psi'$ 
two Schwartz-Bruhat functions such that $\varphi=\mathcal F(\varphi')$ and $\psi=\mathcal F(\psi')$.
Thus, we have the equalities $\varphi.\psi=\mathcal F(\varphi')\mathcal F(\psi')=\mathcal F(\varphi'*\psi')$,
by proposition \cite[7.4.3]{CluLoe10a}. By the proposition \ref{convolution} the convolution product $\varphi'*\psi'$ 
is a Schwartz-Bruhat function therefore the product $\varphi.\psi$ is a Schwartz-Bruhat function on $X$
by \cite[Theorem 7.5.1]{CluLoe10a}.
\end{proof}

\begin{defn}[Restriction of a Schwartz-Bruhat function]  \label{restriction-SB}
Let $V$ be the definable set $h[m,0,0]$ for a positive integer $m$ and
 let $X$ be an open definable subset of $V$. The set $S_P(P\times X)$ of Schwartz-Bruhat functions on $X$ with parameters in $P$ is the set of Schwartz-Bruhat functions $\varphi$ in $S_P(P\times V)$ such that $\varphi 1_{P\times X} = \varphi$.
\end{defn}

\begin{prop}  \phantomsection \label{SB-exp}
For any Schwartz-Bruhat function $\varphi$ in $S_{P}(P\times X)$ the constructible exponential function
 $\psi : (p,x,\xi) \mapsto \varphi(p,x)E(x\mid \xi)$ is a Schwartz-Bruhat function in $S_{P\times V_\xi}(P\times V_\xi \times X)$ with
 $$\alpha^{-}(\psi) : (p,\xi) \mapsto \alpha^{-}(\varphi)(p)\:\:
 \mathrm{and}\:\:
 \alpha^{+}(\psi) : (p,\xi) \mapsto \max(\alpha^{+}(\varphi)(p),-\ord \xi).$$
\end{prop}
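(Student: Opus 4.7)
The plan is to verify the three conditions of Definition \ref{defn:schw} for $\psi$, viewed as a constructible exponential function on $P \times V_\xi \times X$ with parameter set $P\times V_\xi$: namely $(P\times V_\xi)$-integrability, the bounded support condition with bound $\alpha^-(\psi)(p,\xi):=\alpha^-(\varphi)(p)$, and the locally constant condition with bound $\alpha^+(\psi)(p,\xi):=\max(\alpha^+(\varphi)(p),-\ord\xi)$. Integrability should follow from the definition \eqref{defexp} of $\I$ together with Proposition \ref{compatibilite-pull-back-push-forward}: the factor $E(x\mid\xi)$ sits in the exponential tensor component $K_0(\RDef^{\exp})$, and $\varphi$ pulled back along the projection $P\times V_\xi\to P$ is $(P\times V_\xi)$-integrable since its support in $x$ is bounded uniformly in $\xi$. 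For the bounded support condition, multiplying the identity $\varphi\cdot 1_{B_\alpha}=\varphi$ (for $\alpha\leq \alpha^-(\varphi)\circ p_P$) by $E(x\mid\xi)$ immediately yields $\psi\cdot 1_{B_\alpha}=\psi$.

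The heart of the proof is the locally constant condition. Using the multiplicativity of $E$ coming from relation \eqref{R5}, I would first compute
\begin{align*}
(\psi * 1_{B_\alpha})(p,\xi,z) &= \int_{y\in V_x}\varphi(p,z-y)\, E\bigl((z-y)\mid \xi\bigr)\, 1_{B_\alpha}(y)\, dy \\
&= E(z\mid\xi)\int_{B_\alpha}\varphi(p,z-y)\, E(-y\mid \xi)\, dy.
\end{align*}
For $\alpha>-\ord\xi$ one has $\ord\xi_i>-\alpha$ for every coordinate, so on $y\in B_\alpha$ the order $\ord(-y\mid\xi)\geq 1$; the reduction relation \eqref{R3} then gives $E(-y\mid\xi)=1$ on $B_\alpha$. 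Combined with the locally constant property of $\varphi$ (applicable since $\alpha>\alpha^+(\varphi)(p)$), this reduces the remaining integral to $\L^{-m\alpha}\varphi(p,z)$, and multiplying back by $E(z\mid\xi)$ yields $(\psi * 1_{B_\alpha})(p,\xi,z)=\L^{-m\alpha}\psi(p,\xi,z)$, as required.

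The main obstacle is the rigorous justification that $E(-y\mid\xi)=1$ as an element of the appropriate relative Grothendieck ring $K_0(\RDef^{\exp})$ (not merely pointwise) on the definable locus where $\alpha>-\ord\xi$ and $y\in B_\alpha$; the key input is \eqref{R3} applied on the definable set where $\ord(-y\mid\xi)\geq 1$. A minor technicality is that $-\ord\xi$ is not finite-valued on $\{\xi=0\}$; but on that locus $E(x\mid 0)=1$ so $\psi$ reduces to $\varphi$ and the conclusion is directly inherited, so this locus can be absorbed into the definition of $\alpha^+(\psi)$ by a natural convention or handled separately.
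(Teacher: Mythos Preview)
Your proposal is correct and follows essentially the same route as the paper's proof: both dispose of integrability and the bounded support condition briefly, then compute $\psi*1_{B_\alpha}$ via Remark~\ref{convolution-remark} and the multiplicativity of $E$ to obtain $E(z\mid\xi)\int_{B_\alpha}\varphi(p,z-y)E(-y\mid\xi)\,dy$, invoke relation~\eqref{R3} on the locus $\ord y\geq\alpha$ to reduce $E(-y\mid\xi)$ to $1$, and finish with the convolution identity for $\varphi$. Your treatment of the $\xi=0$ locus is a nice extra care that the paper leaves implicit.
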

\begin{proof}
The constructible exponential function $\psi$ is $(P\times V_{\xi})$-integrable with a bounded support because $\varphi$ is $P$-integrable with a bounded support $B_{\alpha^{-}(\varphi)}$.
For any $\alpha$ bigger than $\alpha^{+}(\psi)$, using
 Remark \ref{convolution-remark}, the multiplicativity of the additive character $E$ and the axiom of projection \ref{push forward} we obtain, in notation with integrals,
$$
\psi*1_{B_{\alpha}} =
(p,z,\xi) \mapsto E(z\mid \xi)\int_{y\in B_{\alpha(p,\xi)}}\varphi(p,z-y)E(-y \mid \xi)dy.
$$
Using the fact that $\alpha(p,\xi) \geq -\ord \xi $ and axiom (\ref{R3}) for exponentials we deduce that the constructible exponential function
$(z,y,\xi,p) \mapsto E(-y \mid \xi)$ is equal to 1 on the definable set
$$C_{\alpha}=\{(z,y,\xi,p)\mid \ord(y)\geq \alpha(p,\xi)\}.$$
Then, as $\alpha$ is bigger than $\alpha^{+}(\varphi)$, using the convolution identity on $\varphi$ we obtain $\psi*1_{B_{\alpha}} = \mathbb L^{-\alpha m} \psi$. 
\end{proof}

\subsubsection{Locally integrable function}
Let $V$ be the definable set $h[m,0,0]$.

\begin{defn}[Locally integrable functions]  \label{loc-integrable}
	Let $X$ be an open subassignment of $V$. A \emph{locally $P$-integrable} function on $P\times X$ is a constructible function $u$ in 
	$\mathcal C(P\times X)^{\: \exp}$ such that for any definable function $\alpha:P\to \Z$,  the function $1_{B_\alpha} u$ lies in $\I_P(P\times X)^{\: \exp}$.
\end{defn}
\begin{prop}  \label{loc-int-SB}
 Let $X$ be an open subassignment of $V$ and $u$ be a locally $P$-integrable function on $P\times X$. Then, for any Schwartz-Bruhat function $\varphi$ in $S_{P}(P\times V)$, the product $\varphi u$ lies in $\I_P(P\times V)^{\: \exp}$.
\end{prop}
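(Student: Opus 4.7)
The plan is to reduce to the bounded-support case, smooth the integrable factor by convolution against $1_{B_\alpha}$ to turn it into a Schwartz-Bruhat function, and then transfer integrability via Fubini. Because $\varphi$ has bounded support we have $\varphi u = \varphi\cdot v$ with $v:=1_{B_{\alpha^{-}(\varphi)}}u$, and the local $P$-integrability of $u$ gives $v\in \I_P(P\times X)^{\exp}$. Extending $v$ by zero outside $X$ (its support being already contained in the bounded definable set $B_{\alpha^{-}(\varphi)}$), we regard $v$ as an element of $\I_P(P\times V)^{\exp}$, so it suffices to prove $\varphi v \in \I_P(P\times V)^{\exp}$.

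Choose a definable function $\alpha:P\to\Z$ with $\alpha>\alpha^{+}(\varphi)$ and $\alpha\geq\alpha^{-}(\varphi)$, which is possible by the flexibility in the choice of $\alpha^{\pm}(\varphi)$ discussed after Definition \ref{defn:schw}. By Proposition \ref{conv-lin} the convolution $w:=v*1_{B_\alpha}$ lies in $\I_P(P\times V)^{\exp}$. Its support is contained in $B_{\alpha^{-}(\varphi)}+B_\alpha=B_{\alpha^{-}(\varphi)}$, and the elementary identity $1_{B_\alpha}*1_{B_\beta}=\mathbb L^{-\beta m}1_{B_\alpha}$ for $\beta\geq\alpha$ yields, by associativity of convolution, $w*1_{B_\beta}=\mathbb L^{-\beta m}w$. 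Hence $w$ is Schwartz-Bruhat, and by Proposition \ref{productSB} the product $\varphi\cdot w$ is again Schwartz-Bruhat, in particular $P$-integrable.

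To conclude, introduce on $P\times V_y\times V_z$ (with $V_y,V_z$ two copies of $V$) the auxiliary constructible exponential function
\[
\Psi(p,y,z):=\varphi(p,y)\cdot 1_{B_\alpha}(z-y)\cdot v(p,z).
\]
Its partial integral in $z$ is $\varphi(p,y)\cdot(v*1_{B_\alpha})(p,y) = \varphi\cdot w$, which is $P$-integrable in $V_y$ by the previous step; the accompanying $(P\times V_y)$-integrability of $\Psi$ along $V_z$ (needed to apply Fubini) follows, after the change of variables $z\mapsto u:=z-y$ with trivial Jacobian, from the convolution construction of Proposition \ref{conv-lin}. So $\Psi\in\I_P(P\times V_y\times V_z)^{\exp}$ by the Fubini axiom. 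Applying Fubini in the other order, and using the locally constant identity $\varphi*1_{B_\alpha}=\mathbb L^{-\alpha m}\varphi$, we obtain
\[
\int_y \Psi\, dy = v(p,z)\cdot(\varphi*1_{B_\alpha})(p,z) = \mathbb L^{-\alpha m}\varphi v,
\]
so that $\mathbb L^{-\alpha m}\varphi v$, and therefore $\varphi v$, lies in $\I_P(P\times V)^{\exp}$.

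The main subtlety is verifying the intermediate integrability hypothesis for the first invocation of Fubini, namely the $(P\times V_y)$-integrability of $\Psi$ along $V_z$. I would handle this by recognising the inner integrand, after the change of variables $z\mapsto z-y$, as a shifted version of the integrand defining the convolution $v*1_{B_\alpha}$, whose integrability is precisely the content of Proposition \ref{conv-lin}.
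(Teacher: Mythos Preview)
Your argument is correct, but it takes a different route from the paper's. The paper's proof is a one-line Fourier argument: write $\varphi=\mathcal F\psi$ for some $\psi\in S_P(P\times V)$ via Theorem~\ref{thm:four}, observe that
\[
(p,x,y)\longmapsto u(p,x)\,1_{B_{\alpha^{-}(\varphi)}}(p,x)\,\psi(p,y)\,E(x\mid y)
\]
lies in $\I_P(P\times V_x\times V_y)^{\exp}$ (the $x$-integrability comes from local integrability of $u$, the $y$-variable is confined to the bounded support of $\psi$), and integrate out $y$ by Fubini to get $u\varphi=u\,\mathcal F\psi\in\I_P$. Your approach replaces the Fourier representation of $\varphi$ by its convolution identity $\varphi=\mathbb L^{\alpha m}\varphi*1_{B_\alpha}$: you smooth $v=1_{B_{\alpha^-(\varphi)}}u$ into a Schwartz--Bruhat function $w=v*1_{B_\alpha}$, invoke Proposition~\ref{productSB} to get $\varphi w$ Schwartz--Bruhat, and then run Fubini on the kernel $\Psi(p,y,z)=\varphi(p,y)1_{B_\alpha}(z-y)v(p,z)$ to transfer integrability back to $\varphi v$. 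Both arguments are ultimately Fubini transfers across an auxiliary variable; yours is more hands-on and makes the role of the locally-constant condition explicit, while the paper's is shorter but hides this in the Fourier machinery. Note that your route is not more elementary in the end, since Proposition~\ref{productSB} itself rests on Fourier inversion. Your handling of the intermediate $(P\times V_y)$-integrability of $\Psi$ is right in spirit, though the cleanest phrasing is: Remark~\ref{convolution-remark} applied to $v$ and $1_{B_\alpha}$ gives $(p,y,z)\mapsto v(p,z)1_{B_\alpha}(y-z)\in\I_{P\times V_y}$, and then multiplication by the pullback $\varphi(p,y)$ preserves this by Remark~\ref{remproj}.
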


\begin{proof}
By Proposition \ref{thm:four}, there is a Schwartz-Bruhat function $\psi$ in $S_{P}(P\times X)$ such that $\varphi$ is equal to $\mathcal F(\psi)$. The function
$$(p,x,y)\ra u(p,x)1_{B_{\alpha^{-}(\varphi)}}(p,x)\psi(p,y)E(x\mid y)$$ lies in $\I_P(P\times V_x\times V_y )$.
Therefore, by Fubini axiom the function $u\varphi$, equal to $u\mathcal F(\psi)$, lies in $\I_P(P\times V)$.
\end{proof}

\section{Bounded $\mathbb Z$-valued Presburger functions}  \label{dc}

In real or $p$-adic analysis, finiteness properties are often proved using compactness. For instance the corresponding versions of the following lemma \ref{lemme-cle} 
are easy consequences of the compactness of the spheres.
In our setting, finiteness will follow from definability via the quantifier elimination Theorem \ref{QE} and from cell decompositions. Lemma \ref{lemme-cle} will be a key tool in the proof of properties of the motivic wave front set.

\begin{lem}  \label{lemme-cle}
We consider $\mathbb Z$ endowed with the discrete topology and we denote by $B$ the unit ball of $h[1,0,0]$ defined by $\ord x\geq 0$.
For every integer $m\geq 1$, any continuous definable function $\beta$ from $B^{m}$ to
$\mathbb Z_{\geq 0}$ is bounded above.
\end{lem}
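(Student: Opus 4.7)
The plan is to argue by induction on $m$, using Denef--Pas cell decomposition (Theorem \ref{celldecompthm}) as the main tool in the one-dimensional base case, with Corollary \ref{fini} ensuring finiteness in the pure residue or pure value-group sorts. For $m > 1$, I view $B^m$ as a subassignment of $B^{m-1}[1,0,0]$ and apply the same argument relatively to $S = B^{m-1}$, combining the base-case analysis in the last valued-field variable with the induction hypothesis to handle the parameter dependence.

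For the base case $m = 1$, I apply Theorem \ref{celldecompthm}(2) to the Presburger function $\beta$ on $B \subset \{*\}[1,0,0]$, obtaining a finite partition of $B$ into cells $Z_i$ with presentations $\lambda_i$ and Presburger-constructible functions $\psi_i \in \mathcal{P}(C_i)$ on bases $C_i \subset h[0, s_i, r_i]$ such that $\beta_{|Z_i} = \lambda_i^* p_i^* \psi_i$. For a $0$-cell $Z_i = \{c_i(y) : y \in C_i\}$, Corollary \ref{fini} (third case) forces $c_i$ to have finite image, so $Z_i$ is finite and $\beta$ takes finitely many values on it. For a $1$-cell $Z_{C_i, c_i, \alpha_i, \xi_i}$, the same corollary shows $c_i$ is finite-valued, so refining $C_i$ along the finitely many values of $c_i$ reduces to the case where the center is a constant $c^* \in B$.

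On each refined $1$-cell piece I argue by contradiction: assume $\psi_i$ is unbounded. A Presburger decomposition of $C_i$ compatible with $\psi_i$ and $\alpha_i$ yields two sub-cases. First, $\alpha_i$ may be unbounded along a value-group variable along which $\psi_i \to +\infty$: then the shrinking definable family of balls $B(c^* + \xi_i(y)t^{\alpha_i(y)}, \alpha_i(y) + 1) \subset B$ accumulates at the point $c^* \in B$ (since for any $z$ in such a ball, $\ord(z - c^*) = \alpha_i(y) \to +\infty$), and continuity of $\beta$ at $c^*$ combined with discreteness of $\Z$ would force $\beta(c^*) = \lim \psi_i(y) = +\infty$, a contradiction. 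In the second sub-case $\alpha_i$ is bounded, hence after further refinement constant, on the sub-piece; the bijectivity of the presentation $\lambda_i$ then forces $\xi_i : C_i \to h[0,1,0]$ to be injective, so $\psi_i$ factors as $\tilde{\psi}_i \circ \xi_i$ for some definable $\tilde{\psi}_i : \xi_i(C_i) \subset h[0,1,0] \to \Z$, which by Corollary \ref{fini} (first case) has finite image.

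The main obstacle is the first sub-case above: rigorously producing a definable sub-piece of $C_i$ on which $\alpha_i \to +\infty$ and $\psi_i \to +\infty$ simultaneously along a common value-group variable. Here the force of Corollary \ref{fini} is essential: because the centers $c_i$ are automatically finite-valued, the shrinking balls are channelled towards one of finitely many candidate accumulation points inside $B$, at which $t$-adic completeness (nested shrinking balls have non-empty intersection) substitutes for compactness and produces the required continuity violation.
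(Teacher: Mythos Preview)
Your approach is essentially the paper's: induction on $m$, cell decomposition for $m=1$, finiteness for $0$-cells via Corollary~\ref{fini}, reduction to a constant center for $1$-cells, a continuity contradiction at the center when $\alpha$ and $\psi$ both blow up, and a residue-field finiteness argument when $\alpha$ is bounded. Two points deserve sharpening.

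First, your sub-case dichotomy on a $1$-cell is not exhaustive, and the ``main obstacle'' you flag is self-inflicted. You try to produce a definable sub-piece of $C_i$ on which $\alpha_i$ and $\psi_i$ simultaneously go to infinity along a common value-group direction; your last paragraph then discusses finiteness of centers and $t$-adic completeness, which concerns why the limit point exists, not why such a common direction can be found. The paper avoids this entirely: it proves directly, by contradiction with a \emph{sequence} rather than a definable sub-piece, that there exists $M$ with $\psi_C(\eta,l)>M \Rightarrow \alpha(\eta,l)\leq M$. If no such $M$ exists, pick for each $M$ a point $(\eta_M,l_M)$ with $\psi_C>M$ and $\alpha>M$; the points $c+\xi(\eta_M,l_M)t^{\alpha(\eta_M,l_M)}$ converge to $c\in B$, yet $\beta$ at them exceeds $M$, contradicting continuity at $c$. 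No Presburger decomposition and no definable direction are needed. Once this bound is in hand, on the region $\{\psi_C>M\}$ one has $\alpha\leq M$, and your sub-case~2 argument applies to each of the finitely many level sets of $\alpha$. (Your factoring of $\psi$ through the necessarily injective $\xi$ when $c,\alpha$ are constant is correct and slightly cleaner than the paper's two-step version, which first fixes the residue parameter and then takes a definable maximum over it.)

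Second, your induction step is vaguer and heavier than needed. The paper does not redo the relative cell decomposition over $S=B^{m-1}$. Instead, having the case $m=1$ in hand, it sets $\mu(y)=\max_{z\in B}\beta(y,z)$ for $y\in B^{m-1}$; this maximum exists by the one-variable case applied fiberwise, $\mu$ is definable (the condition ``$\mu(y)\geq n$'' is first-order), and one checks it is continuous. The induction hypothesis applied to $\mu$ finishes the proof.
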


\begin{rem}For
analogous properties  in the context of algebraically closed valued fields, $\mathrm{ACVF}(0,0)$, see Lemma 11.6 in \cite{HruKaz06} and
Lemma 7.5 in \cite{Yimu-selecta}.
\end{rem}

\begin{proof}
We prove the lemma by induction on the number $m$ of valued field sort variables.

Suppose first that $m=1$, and let $\beta :  B\ra \mathbb Z_{\geq 0}$ be a continuous definable map.
This map is a Presburger function on $B$,
so by the Cell Decomposition Theorem \ref{celldecompthm}, there is a finite partition of $B$ in cells
$\lambda_{C}:B_{C}\ra Z_{C}$ where
$\lambda_{C}$ is a definable bijection, $Z_{C}$ is a definable subset of
$h[1,n_{C},r_{C}]$ and $C$ is a definable subset of $h[0,n_{C},r_{C}]$ endowed
with a definable morphism $\psi_{C}:C\ra \mathbb Z$
and the canonical projection $p_{C}:Z_{C}\ra C$, such that the following diagram
commutes :
$$
\def\commutatif{\ar@{}[rd]|{\circlearrowleft}}
\xymatrix{
B_{C} \ar[r]^{\lambda_{C}} \ar[d]_{\beta} \commutatif & Z_{C} \ar[d]^{p_{C}} \\
\mathbb Z_{\geq 0}& \ar[l]^{\psi_{C}} C
}
$$
We prove that $\beta$ takes finitely many values on every cell.\\

\noindent If $Z_{C}$ is a $0$-cell, then by definition $r_{C}$ equals $0$ and
it follows from Theorem 2.1.1 of
\cite{CluLoe08a}, see also Theorem \ref{QE}, that the range of $\psi_{C}:C \subset
h[0,n_{C},0] \ra \mathbb Z$ is finite. Hence, the restriction
$\beta_{\mid B_{C}}$ takes finitely many values.\\

\noindent If $Z_{C}$ is a $1$-cell, then by definition there are definable
morphisms
$\alpha:C\ra h[0,0,1]$, $\xi:C\ra h[0,1,0]\setminus \{0\}$ and $c:C\ra h[1,0,0]$
such that
$$Z_{C}=\{(\eta,l,z)\in C[1,0,0] \mid \ord(z-c(\eta,l))=\alpha(\eta,l),\:
\ac(z-c(\eta,l))=\xi(\eta,l)\}.$$
The projection $p_{C}$ is surjective, so the range of $\beta_{\mid B_{C}}$ is equal to the range of $\psi_{C}$.
By definition we have the following commutative diagram, where $i$ is the canonical
injection and $p$ the canonical projection
$$
\def\commutatif{\ar@{}[rrd]|{\circlearrowleft}}
\xymatrix{
h[1,0,0]  \supset  B_{C} \ar[rr]^{\lambda_{C}} \ar[dr]_{i} \commutatif  & & Z_{C} \ar[ld]^{p}  \subset
h[1,0,0]\times h[0,n_{C},r_{C}]
\\
 & h[1,0,0] &
}
$$
This means that $\lambda_{C}$ has the following form:
$$
\lambda_{C} \colon
\begin{cases}
 B_{C} \longrightarrow Z_{C} \\
 z      \longmapsto  (\eta(z),l(z),z).
\end{cases}
$$
In
particular, as $\lambda$ is a surjective function, if $(\eta,l,z)$
belongs to $Z_{C}$ then necessarily $\eta(z)=\eta$ and $l(z)=l$.
By definition of $Z_{C}$ we have the disjoint union over the base $C$
$$Z_{C}=\bigsqcup_{(\eta,l)\in C}\{\eta,l\}\times B_{\eta,l}$$
where $B_{\eta,l}$ is the fiber over $(\eta,l)$, which equals the ball $B\left(c(\eta,l)+\xi(\eta,l)t^{\alpha(\eta,l)},\alpha(\eta,l)+1\right)$.
By the previous remark
$$\lambda_{C}^{-1} (\{\eta,l\}\times B_{\eta,l}) =B_{\eta,l}$$
and the map $(\eta,l)\mapsto
B_{\eta,l}$ is
injective.
Indeed, if there is $(\eta',l')$ such that
$ B_{\eta,l} = B_{\eta',l'}$
then for any $z$ in the ball, $(\eta,l,z)$ and $(\eta',l',z)$ belongs to $Z_{C}$
and by the remark $(\eta,l)=(\eta(z),l(z))=(\eta',l')$.
Note that for every $z$ in the ball, $\beta$ is constant on the ball
$B_{\eta(z),l(z)}$ and equal to $\psi_{C}(\eta(z),l(z))$.

Again by the quantifier elimination theorem \ref{QE} and syntactical analysis of quantifier free formulas, 
$$c:C\subset h[0,n_{C},r_{C}]\ra h[1,0,0]$$
takes finitely many values. For notational simplicity we suppose that $c$ is constant.
We claim that there is $M\geq 0$ such that for all $(\eta,l)$, if
$\psi_{C}(\eta,l)>M$ then $\alpha(\eta,l)\leq M$. Otherwise,
for all $M\geq 0$ there is $(\eta_{M},l_{M})\in C$ such that
$\psi_{C}(\eta_{M},l_{M})>M$ and $\alpha(\eta_{M},l_{M})>M$, then
the sequence $(c_{M}:=c+\xi(\eta_{M},l_{M})t^{\alpha(\eta_{M},l_{M})})$ has a limit $c$
which clearly belongs to $B$.
On the one hand the sequence $(\beta(c_{M})=\psi_{C}(\eta_{M},l_{M}))$ goes to infinity, and on the other hand $\beta(c) \in \Z $ and thus $\beta(c) \neq \infty$. This contradicts that $\beta$ is continuous and in particular constant on a neighborhood of $c$.\\

\noindent So, there is $M\geq 0$ such that for all $(\eta,l)$, if
$\psi_{C}(\eta,l)>M$ then $\alpha(\eta,l)\leq M$. We define
$$C_{M}:= \{(\eta,l)\in C \mid \psi(\eta,l)>M,\: \alpha(\eta,l)\leq M\}.$$
Note that $\beta$ is bounded on $C\setminus C_{M}$ because $\psi_{C}$ is
bounded by $M$ on it.\\

\noindent We prove now that $\beta$ is also bounded on $C_M$. Let us focus on the part $C_{MM}$ where $\alpha=M$, the other parts $C_{Mi}$ of $C_M$ where $\alpha=i$ are treated similarly.\\

\noindent We denote by $\pi(C_{MM})$ the image of $C_{MM}$ under the projection to $h[0,n_C,0]$. For any $\eta$ in $\pi(C_{MM})$ we denote by $C_{MM,\eta}$ the set of $(\eta,l)$ which belong to $C_{MM}$.
Fix $\eta$ in $\pi(C_{MM})$, the map $$\xi(\eta,-):C_{MM,\eta} \subset \{\eta\} \times h[0,0,r_{C}]\ra h[0,1,0]\setminus\{0\}$$ is definable and by the quantifier elimination theorem,
it takes finitely many values. Hence, there are finitely many balls
$B(c+\xi(\eta,l)t^{M},M+1)$, so the restriction $\beta_{\mid C_{MM,\eta}}$ and
$\psi_{\mid C_{MM,\eta}}$ take finitely many values.
We can consider the map
$$
\begin{cases}
\pi(C_{MM}) \longrightarrow h[0,0,1] \\
\eta  \longmapsto  \max \psi_{\mid C_{MM,\eta}}.
\end{cases}
$$
This map is clearly definable.
By using the quantifier elimination theorem once more, and syntactical analysis, this map takes finitely many values, which implies that $\psi_{C}$ and $\beta_{\mid B_{C}}$ take finitely many values.
s the cell decomposition involves finitely many cells, $\beta$ takes finitely many values.

\par

Now let $m>1$ be general and let $r:B^{m}\ra \mathbb Z$ be a definable function.
Consider for every $y$ in $B^{m-1}$ the function $r_{y}:z\in B \mapsto r(y,z)$.
This is a definable function, which, by induction, has finite range.
Then, we can consider the function
$$
\mu \colon
\begin{cases}
 B^{m-1} \longrightarrow \mathbb Z_{\geq 0}\\
 y  \longmapsto  \max r_{y}.
\end{cases}
$$
This function is definable. One easily checks that this function is moreover continuous.
By the induction hypothesis applied to $\mu$, we conclude that $\mu$ takes finitely many values, and consequently $r$ takes finitely many values.
\end{proof}
\begin{prop}  \label{definablecompactness}
For every integer $m\geq 1$, any continuous definable function $\beta : X\ra \mathbb Z$ defined on a closed bounded definable subset of $h[m,0,0]$ takes only finitely many values.
\end{prop}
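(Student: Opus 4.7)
The plan is to reduce the statement to Lemma \ref{lemme-cle} by rescaling, and then to rerun its inductive proof on $X$, using closedness of $X$ exactly where the lemma used that $B$ is closed.

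First I would exploit the boundedness of $X$: there exists $N \in \mathbb N$ with $\ord x_i \geq -N$ for every $(x_1,\ldots,x_m) \in X$ and every $i$. The coordinatewise multiplication $x \mapsto t^N x$ is a definable homeomorphism of $\{x : \min_i \ord x_i \geq -N\}$ onto $B^m$; it carries $X$ onto a closed bounded definable subset of $B^m$ and transports $\beta$ to a continuous definable $\mathbb Z$-valued function there. So I may assume $X \subseteq B^m$.

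Next I would proceed by induction on $m$, mimicking the proof of Lemma \ref{lemme-cle}. For the base case $m=1$, the Cell Decomposition Theorem \ref{celldecompthm} partitions $X$ into finitely many cells. On each $0$-cell, $\beta$ factors through a Presburger function on a subset of $h[0,n,r]$, which has finite range by Corollary \ref{fini}. For a $1$-cell $Z_{C,c,\alpha,\xi}$, after reducing to a constant center $c$ using the finiteness of the range of $c$, the key step is to show that $\alpha$ is bounded on $\{\psi_C > M\}$ for some $M$: otherwise one would build a sequence of points $z_M$ inside the cell, hence in $X$, converging to $c$, on which $\beta$ tends to infinity. Closedness of $X$ forces $c \in X$, and then continuity of $\beta$ at $c$ contradicts $\beta(z_M) \to \infty$. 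This is the one place where my argument departs from Lemma \ref{lemme-cle}: there, $c$ was automatically in $B$; here it is in $X$ thanks to closedness. The remaining analysis on the bounded-$\alpha$ part (finiteness of $\xi(\eta,-)$ on fibers and syntactic analysis of definable maps to $\mathbb Z$) then carries over verbatim.

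For the inductive step $m>1$, I would form, for each $y$, the fiberwise maximum $\mu(y) := \max_{z \in X_y}\beta(y,z)$ with $X_y = \{z \in B : (y,z) \in X\}$. Each $X_y$ is closed bounded in $B$, so the $m=1$ case makes $\mu$ a well-defined definable map on the projection $\pi(X) \subseteq B^{m-1}$. Applying the inductive hypothesis to $\mu$ would finish the proof, provided $\pi(X)$ is closed and $\mu$ is continuous. Boundedness of $\pi(X)$ is automatic. The main obstacle I expect is precisely checking closedness of $\pi(X)$ and continuity of $\mu$: because $h[m-1,0,0]$ is not locally compact, the usual compactness argument for closedness of continuous projections fails, and the continuity of $\mu$ is non-trivial when fibers $X_y$ vary with $y$. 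I would handle this by running a cell decomposition of $X$ viewed as a subassignment of $h[m-1,0,0][1,0,0]$ and tracking the base structure of each cell directly, so that finiteness is obtained cell by cell from the $m=1$ step together with the inductive hypothesis applied to the bases, rather than via a naive projection argument.
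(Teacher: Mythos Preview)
Your proposal takes a much harder route than necessary and, as you yourself note, leaves the inductive step unresolved. The paper's proof is essentially one line after the rescaling you already did: since $X\subset B^m$ is closed, extend $\beta$ by $0$ to a function $\tilde\beta:B^m\to\mathbb Z$; this extension is continuous and definable. Then apply Lemma~\ref{lemme-cle} to $\max(\tilde\beta,0)$ and to $\max(-\tilde\beta,0)$, both of which are continuous definable maps $B^m\to\mathbb Z_{\geq 0}$. That's it. The key observation you missed is that closedness of $X$ lets you \emph{extend by zero continuously}, so you never have to leave the setting of Lemma~\ref{lemme-cle} and never have to rerun its proof on a variable domain.

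Your approach, by contrast, re-proves Lemma~\ref{lemme-cle} on $X$. The base case $m=1$ is fine and your use of closedness there is correct. But the inductive step genuinely breaks down as you anticipated: the projection $\pi(X)$ need not be closed in $B^{m-1}$ (there is no compactness to save you), and continuity of the fiberwise maximum $\mu$ is not automatic when the fibers $X_y$ vary. Your proposed fix---tracking cell bases directly---is plausible in spirit but is not an argument; it is a sketch of where an argument might live, and filling it in would amount to reproving the lemma in greater generality than needed. All of this is avoided by the extension-by-zero trick.
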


\begin{proof} The definable set $X$ of $h[m,0,0]$ being bounded, we can suppose it is contained in the ball $B^{m}$. As $X$ is closed, consider the continuous extension $\tilde{\beta}$ of $\beta$ by $0$ to the ball $B^{m}$. We are done by Lemma \ref{lemme-cle} applied once to $\tilde{\beta}_{\mid \tilde{\beta}^{-1}(\mathbb Z_{\geq 0})}$ and once to $-\tilde{\beta}_{\mid \tilde{\beta}^{-1}(\mathbb Z_{\leq 0})}$.
\end{proof}

\begin{rem}
\begin{enumerate}
\item The definable function $\ord$ on $B\setminus \{0\}$ is not bounded, but the punctured ball is not closed. Any extension of this function to a function $B\to \Z$ is not continuous.
\item The definable function $-\ord : h[1,0,0]\setminus B \ra \mathbb Z$ is not bounded, but $h[1,0,0]\setminus B$  is also not bounded.
\end{enumerate}
\end{rem}


For all integers $m\geq 1$ and $n\geq 0$, by a closed bounded definable set of $h[m,n,0]$ we mean a definable set which is closed for the discrete topology on the residue field and the valuation topology on the valued field and which has bounded projection on
$h[m,0,0]$. We deduce the following corollaries.

\begin{cor}  \label{corrolaire-principal}
For every integers $m\geq 1$ and $n\geq 0$, any continuous definable function $\beta : X\ra \mathbb
Z$ defined on a closed bounded definable subset of $h[m,n,0]$ takes finitely many values, where $h[m,n,0]$ is endowed with the product topology of the $t$-adic topology on $h[m,0,0]$ and the discrete
topology on $h[0,n,0]$.
\end{cor}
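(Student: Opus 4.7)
My plan is to reduce the corollary to Proposition~\ref{definablecompactness} by slicing $X$ over the residue-field factor $h[0,n,0]$, which carries the discrete topology, and then use Corollary~\ref{fini} to package the resulting fibrewise finiteness into a single global bound. Discreteness of the residue-field sort makes the topological slicing harmless, while Corollary~\ref{fini} is exactly the tool that controls $\mathbb Z$-valued definable functions defined on residue-field-sort subassignments.

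The first step is to set $\pi : h[m,n,0] \to h[0,n,0]$ to be the projection onto the residue-field sorts, put $Y := \pi(X) \subseteq h[0,n,0]$, and for each $\eta \in Y$ consider the fiber $X_\eta := \{x \in h[m,0,0] \mid (x,\eta) \in X\}$. Because $\{\eta\}$ is clopen in $h[0,n,0]$, the intersection $X \cap (h[m,0,0]\times\{\eta\})$ is closed in $h[m,0,0]\times\{\eta\}$, so $X_\eta$ is closed in $h[m,0,0]$; it is bounded because the projection of $X$ to $h[m,0,0]$ is assumed bounded. Hence $\beta_\eta(x) := \beta(x,\eta)$ is a continuous definable $\mathbb Z$-valued function on a closed bounded definable subset of $h[m,0,0]$, and Proposition~\ref{definablecompactness} yields that $\beta_\eta(X_\eta)$ is finite for each $\eta \in Y$.

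The second step is to bound these finite fiber-images uniformly in $\eta$. For this I would introduce the two definable maps
$$
\mu^{+}(\eta) := \max_{x \in X_\eta}\beta(x,\eta), \qquad
\mu^{-}(\eta) := \min_{x \in X_\eta}\beta(x,\eta),
$$
which are well-defined on $Y$ by the previous step, and whose graphs are given by the obvious $\exists/\forall$-formulas over $X_\eta$, so that they are definable morphisms from $Y \subseteq h[0,n,0]$ to $h[0,0,1]$. By Corollary~\ref{fini}, both $\mu^{+}$ and $\mu^{-}$ take only finitely many values. Setting $M := \max_{\eta \in Y}\mu^{+}(\eta)$ and $m_0 := \min_{\eta \in Y}\mu^{-}(\eta)$, I obtain
$$
\beta(X) \;=\; \bigcup_{\eta \in Y}\beta_\eta(X_\eta) \;\subseteq\; [m_0, M]\cap \mathbb Z,
$$
which is finite, proving the corollary.

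The substantive content is entirely absorbed into Proposition~\ref{definablecompactness}; the reduction above is formal. The only place where I would have to be a little careful is the slicing step, namely checking that $X_\eta$ inherits closedness from $X$, but discreteness of $h[0,n,0]$ makes this essentially immediate, so I do not expect any real obstacle when writing out the details.
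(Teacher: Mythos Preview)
Your proof is correct. It differs from the paper's in how the residue-field factor is eliminated. The paper pulls the whole problem back to $h[m+n,0,0]$ via the (definable, locally constant) residue map $\mathrm{res}:\{\ord\geq 0\}\to h[0,1,0]$: setting $\tilde X=\{(x,y)\in h[m+n,0,0]:\ord y_i\geq 0,\ (x,\mathrm{res}(y))\in X\}$ and $\tilde\beta(x,y)=\beta(x,\mathrm{res}(y))$ gives a continuous definable function on a closed bounded subset of $h[m+n,0,0]$ with the same range as $\beta$, so a single application of Proposition~\ref{definablecompactness} finishes. Your route instead slices over $h[0,n,0]$, applies Proposition~\ref{definablecompactness} fiberwise, and then invokes Corollary~\ref{fini} to make the fibrewise bounds uniform; this is exactly the pattern already used inside the inductive step of Lemma~\ref{lemme-cle}, so it fits naturally with the paper's toolkit. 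The paper's trick is a one-liner but hides the construction of $\tilde\beta$; your argument is more explicit and makes transparent why definability over the residue sort suffices for uniformity. One small point worth stating: Corollary~\ref{fini} is phrased for maps on all of $h[0,n,0]$, so extend $\mu^{\pm}$ by $0$ off $Y$ before applying it; and the fiber $X_\eta$ is definable over $k(\eta)$ rather than $k$, which is harmless since Proposition~\ref{definablecompactness} holds over any characteristic-zero base.
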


\begin{proof} From $\beta$ one can easily construct a definable continuous map $\tilde \beta: h[m+n,0,0]\to \Z$ having the same range as $\beta$ and having bounded support. Apply Proposition  \ref{definablecompactness} to $\tilde \beta$ to finish the proof.
\end{proof}
\begin{cor}  \label{compacite-param}
 Let $P$ be a definable set in $\Def_{k}$ and $m\geq 1$ and $n\geq 0$ two integers.
 Let $X$ be a closed and bounded definable subset of $h[m,n,0]$.
 Let $\beta$ be a continuous definable function from $P\times X$ to $\mathbb Z$, where respectively $P$
 and $P\times X$ are endowed with the discrete topology and the product topology.
 For any $p$ in $P$, the restriction $\beta_{p}$ of $\beta$ to
 the product $\{p\}\times X$ take finitely many values and
 the maps
 $$ \begin{cases}
    P \longrightarrow \mathbb Z \\
    p  \longmapsto  \max \beta_{p}
    \end{cases}
    \:\:and\:\:
\begin{cases}
    P \longrightarrow \mathbb Z \\
    p  \longmapsto  \min \beta_{p}
    \end{cases}
 $$
 are well defined and definable.
\end{cor}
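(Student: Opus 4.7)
The plan is to reduce the pointwise finiteness statement to Corollary \ref{corrolaire-principal} applied fibrewise, and then to write down explicit formulas exhibiting the max and min maps as definable morphisms.

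First I would fix a point $p=(p_0,k(p))$ of $P$ and consider the inclusion $\iota_p:X\to P\times X$, $x\mapsto (p,x)$. Since $P$ carries the discrete topology, the singleton $\{p\}$ is open in $P$, so $\iota_p$ is a topological embedding onto an open-and-closed subspace of $P\times X$; in particular $\beta_p:=\beta\circ \iota_p$ is continuous. It is also definable (over $k(p)$, once one fixes the parameter $p_0$), and its domain $X$ is a closed and bounded definable subset of $h[m,n,0]$ by hypothesis. Corollary \ref{corrolaire-principal}, which holds over any field in $\Field_k$, applies to $\beta_p$ and shows that its range is finite. Hence $\max\beta_p$ and $\min\beta_p$ exist in $\mathbb Z$, which establishes the first assertion.

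For the definability of $M:p\mapsto \max\beta_p$ and $m:p\mapsto \min\beta_p$, I would exhibit their graphs by Presburger formulas in the language $\mathcal L_{DP,P}$. Let $\varphi(p,x,N)$ be the formula $(x\in X)\wedge(\beta(p,x)=N)$, which is definable since $\beta$ is a definable morphism. Then the graph of $M$ is defined by
\begin{equation*}
  \bigl(\exists x\in X,\ \beta(p,x)=N\bigr)\ \wedge\ \bigl(\forall x'\in X,\ \beta(p,x')\leq N\bigr),
\end{equation*}
and the graph of $m$ is defined analogously with $\leq$ reversed. These are $\mathcal L_{DP,P}$-formulas in the free variables $(p,N)$, with $N$ in the $\mathrm{Ord}$-sort, so $M$ and $m$ are definable morphisms from $P$ to $\mathbb Z$; they are well-defined functions by the pointwise finiteness proved in the first step.

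The only delicate point is the continuity of $\beta_p$, but as noted this is immediate from the fact that $P$ has the discrete topology and that the product topology on $P\times X$ is used; no further stationary phase or cell-decomposition argument is needed here, the substantive content being entirely packaged inside Corollary \ref{corrolaire-principal}.
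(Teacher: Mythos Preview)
Your proposal is correct and is exactly the natural argument the paper leaves implicit (the paper states this corollary without proof, as an immediate consequence of Corollary~\ref{corrolaire-principal}). The fibrewise application of Corollary~\ref{corrolaire-principal} over the residue field $k(p)$ and the first-order description of the graphs of $\max$ and $\min$ are precisely what is needed, and your remark that the continuity of $\beta_p$ is automatic from the discreteness of $P$ is the right justification.
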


\section{Motivic oscillatory integrals}  \label{integrales-oscillantes}

We develop a motivic analogue of Proposition 1.1 of \cite{Hei85a} about $p$-adic oscillatory integrals.
Over  the reals conic sets naturally occur; in the $p$-adic context Heifetz replaces the multiplicative group
$(\mathbb R^*_+,\times)$ by an arbitrary finite index subgroup $\Lambda$ of $\mathbb Q_p^\times$. As Forey in its definition of a $t$-adic tangent cone in \cite{Forey}, we use the following definable subgroups of $h[1,0,0]^{\times}$.

\begin{defn} \label{Lambdan}
Let $\mathfrak n\geq 1$ be an integer, we consider
$$\Lambda_{\mathfrak n}:=\{x\in h[1,0,0]\:\mid \: (\mathfrak n \mid \ord x)\: \land\: \ac x=1\}.$$
\end{defn}

Let $P$ be in $\Def_k$ and let $\n$, $m$, $m'$ be nonnegative integers. Let $X$ and $V$ be open definable subassignments of $h[m,0,0]$, resp.~of $h[m',0,0]$. Let $\varphi$ be a Schwartz-Bruhat function in $S_{P}(P\times X)$.
Let $g$ be a definable function from $P\times X\times V$ to $h[1,0,0]$.
We denote by $S$ the product $P\times V \times \Lambda_\n$.
On the product $S\times X$ we consider the definable function
$$
q:(p,\lambda,x,v)\mapsto \lambda g(p,x,v).
$$
Then the constructible exponential function $\pi_{P\times X}^{*}(\varphi)E(q)$
 belongs to
$\I_{S}(S \times X)^{\:\exp}$,
where $\pi_{P\times X}$ is the canonical projection from $S\times X$ to $P\times X$.
We define
$$
I_{\varphi}:=\pi_{P\times V \times \Lambda_\n !} (\pi_{P\times X}^{*}(\varphi) E(q)) \in \mathcal C(S)^{\:\exp},
$$
namely, in notation with integrals,
$$
I_{\varphi} : (p,\lambda,v) \mapsto \int_{X}\varphi(p,x)E(\lambda g(p,x,v))dx.
$$

With this notation we can now formulate the following analogue of Proposition 1.1 of \cite{Hei85a}.

\begin{prop}  \label{oscillante}

With notations from just before the proposition, we make the following assumptions. 
\begin{enumerate}
\item For each $p$ in $P$ and $v$ in $V$, the map $x\mapsto g(p,x,v)$ is $C^1$, we write $\mathrm{grad}_{x}g(p,x,v)$ for its gradient.
\item  There are definable maps $N_{R}$ and $N_{\mathrm{grad}}$ from $P$ to $\mathbb Z$ such that, for any $p$ in $P$, all $x$ with $\ord x \geq \alpha^{-}(\varphi)(p)$, all $y$ with $\ord y > \alpha^{+}(\varphi)(p)$, and all $v\in V$,
\begin{equation}\label{hypothese-gradient}
\ord \mathrm{grad}_{x}g(p,x,v) \leq N_{\mathrm{grad}}(p)
\:\:\text{and}\:\:
\ord R(p,x,y,v)\geq N_{R}(p),
\end{equation}
where $R$ is a definable function satisfying the equality
\begin{equation} \label{Taylor}
g(p,x+y,v)=g(p,x,v)+<\mathrm{grad}_{x}g(p,x,v),y> + <R(p,x,y,v)y,y>.
\end{equation}
 \end{enumerate}
Then, the constructible exponential function $I_{\varphi}$ has a bounded support in the $\lambda$-variable with bound only depending on $p$ in $P$.
More precisely, there is a definable function $A: P\to \Z$ such that
\begin{equation}\label{formule:oscillante}
I_{\varphi}1_{B_{-A-N_{\mathrm{grad}}}} = I_{\varphi},
\end{equation}
with
$$
B_{-A-N_{\mathrm{grad}}} = \{(p,v,\lambda)\in P\times V \times \Lambda_\n \mid \ord \lambda \geq -A(p)-N_{\mathrm{grad}}(p) \},
$$
in particular, $A$ can be chosen as $\max(N_{\mathrm{grad}}-N_R+1,\alpha^{+}(\varphi))$.
\end{prop}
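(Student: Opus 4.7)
The plan is a non-stationary phase argument: partition the support of $\varphi$ into small balls on which $\varphi$ is constant, apply the Taylor expansion (\ref{Taylor}) on each ball to replace $E(\lambda g(p,c+y,v))$ by a pure linear exponential $E(\lambda\langle\mathrm{grad}_x g(p,c,v),y\rangle)$ up to a quadratic remainder trivialized by axiom~(\ref{R3}), and recognize the resulting integral over a ball as the Fourier transform of a ball indicator, which Example~\ref{Fourier-indicatrice} forces to vanish once $\ord\lambda$ is sufficiently negative. A mild novelty compared with Heifetz's $p$-adic argument is that, in order to absorb the quadratic Taylor remainder through (\ref{R3}), I will let the radius of the partition depend on $\lambda$.

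Fix $(p,\lambda,v)$ with $\ord\lambda<-A(p)-N_{\mathrm{grad}}(p)$ and set $A'(p,\lambda):=-\ord\lambda-N_{\mathrm{grad}}(p)$, so that $A'\geq A+1\geq\alpha^{+}(\varphi)+1$. Because $A'\geq\alpha^{+}(\varphi)$, the Schwartz--Bruhat function $\varphi(p,\cdot)$ is constant on every ball of $t$-adic radius $A'$, and its support $B_{\alpha^{-}(\varphi)}$ decomposes as a definable family of balls $B(c,A')=\{x\mid\ord(x-c)\geq A'\}$ indexed by a base $C$ of centers. Using Fubini and the projection formula, it suffices to show that for every such $c$,
\begin{equation*}
\varphi(p,c)\int_{\ord y\,\geq\, A'} E\!\bigl(\lambda g(p,c+y,v)\bigr)\,dy \;=\; 0.
\end{equation*}

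By (\ref{Taylor}) and multiplicativity of $E$, the integrand factors as
\begin{equation*}
E(\lambda g(p,c,v))\cdot E(\lambda\langle\mathrm{grad}_x g(p,c,v),y\rangle)\cdot E(\lambda\langle R(p,c,y,v)\,y,y\rangle).
\end{equation*}
The quadratic factor has order at least $\ord\lambda+N_R+2A'=N_R-\ord\lambda-2N_{\mathrm{grad}}$; combining $-\ord\lambda\geq A+N_{\mathrm{grad}}+1$ with $A\geq N_{\mathrm{grad}}-N_R+1$ shows this order is at least $2$, so axiom~(\ref{R3}) forces the quadratic factor to equal $1$. The remaining inner integral is precisely $\mathcal F(1_{B_{A'}})\bigl(\lambda\,\mathrm{grad}_x g(p,c,v)\bigr)$, which by Example~\ref{Fourier-indicatrice} equals $\L^{-mA'}\,1_{B_{-A'+1}}(\lambda\,\mathrm{grad}_x g)$ and hence vanishes unless $\ord(\lambda\,\mathrm{grad}_x g)\geq -A'+1=\ord\lambda+N_{\mathrm{grad}}+1$, i.e.\ unless $\ord\,\mathrm{grad}_x g\geq N_{\mathrm{grad}}+1$; but hypothesis~(\ref{hypothese-gradient}) rules this out, so (\ref{formule:oscillante}) will follow.

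The principal technical obstacle will be to organize this partition-and-integrate step rigorously at the level of constructible exponential functions, since the partition radius $A'(p,\lambda)$ depends on $\lambda$: the partition of $B_{\alpha^{-}(\varphi)}$ varies with $\lambda$, which forces one to work relatively over $P\times V\times\Lambda_{\n}$ and to combine the Fubini axiom, the projection axiom, and Proposition~\ref{compatibilite-pull-back-push-forward} to realize $I_{\varphi}$ as an iterated motivic integral whose vanishing reduces to the ball-by-ball computation above.
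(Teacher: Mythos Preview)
Your proposal is correct and follows essentially the same non-stationary phase strategy as the paper: use the local constancy of $\varphi$ to localize to balls of radius $A'$, apply the Taylor expansion (\ref{Taylor}), trivialize the quadratic remainder via (\ref{R3}), and compute the remaining linear oscillatory integral as the Fourier transform of a ball indicator.

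The one genuine technical difference is in how the radius $A'$ is chosen. You let $A'(p,\lambda)=-\ord\lambda-N_{\mathrm{grad}}(p)$ depend on $\lambda$, which handles all $\lambda$ with $\ord\lambda<-A-N_{\mathrm{grad}}$ in a single stroke but forces you to pull $\varphi$ back to a Schwartz--Bruhat function over $S=P\times V\times\Lambda_{\n}$ before invoking the convolution identity (this is what your final paragraph correctly flags). The paper instead takes $A':P\to\mathbb Z$ independent of $\lambda$, proves $I_{\varphi}1_{J_{A'}}=0$ on the strip $J_{A'}=\{-N_R-2A'<\ord\lambda<-A-N_{\mathrm{grad}}\}$, and then covers the complement of $B_{-A-N_{\mathrm{grad}}}$ by the union $\bigcup_{A'\geq A}J_{A'}$. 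Your choice is arguably cleaner and avoids the covering step; the paper's choice keeps $\varphi$ in $S_P(P\times X)$ throughout, so the convolution identity $\varphi=\mathbb L^{A'm}(\varphi*1_{B_{A'}})$ applies without any pullback. Either route is fine; the underlying computation is identical.
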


\begin{proof}
  Using previous notations we prove the equality $I_{\varphi}1_{J_{A'}} = 0$ for any definable function $A':P\to \mathbb Z$, where
  $$J_{A'}=\{(p,\lambda,v)\in P\times V \times \Lambda_\n \mid -N_{R}(p)-2A'(p)<\ord \lambda < -A(p)-N_{\mathrm{grad}}(p)\}.$$
  We deduce \ref{formule:oscillante} by the inclusion of $(P \times V \times \Lambda_\n)\setminus B_{-A-N_{\mathrm{grad}}}$ in the union $\bigcup_{A'\geq A} J_{A'}$.	
  Consider now a definable function $A'\geq A$. By definition we have $A'\geq \alpha^{+}(\varphi)$. Then, the convolution equality on $\varphi$ implies the equality:
  $$ I_{\varphi} =  (p,\lambda,v)  \mapsto \int_{x \in B_{\alpha^{-}(\varphi)(p)}} \mathbb L^{A'(p) m}
  \left(\int_{z\in B_{\alpha^{-}(\varphi)(p)}} \varphi(p,z)1_{B_{A'(p)}}(x-z)dz\right)E(\lambda g(p,x,v))dx
  $$
   which is equal (by Fubini and projection axioms) to
  $$
  I_{\varphi} = (p,\lambda,v)  \mapsto  \mathbb L^{A'(p) m} \int_{z \in B_{\alpha^{-}(\varphi)(p)}}  \varphi(p,z)
  \left(\int_{x\in B_{\alpha^{-}(\varphi)(p)}} 1_{B_{A'(p)}}(x-z) E(\lambda g(p,x,v))dx\right)dz.
  $$
  By relation \ref{Taylor}, and the definition of $N_R$, we have the inequality 
  \begin{equation}  \label{R}
  \ord \lambda R(p,z,x,v)(x-z) \cdot (x-z) \geq \ord \lambda + N_{R}(p) + 2A'(p).
  \end{equation}
  for any $(p,x,z,v)$ with $\ord(z-x)\geq A'(p)$, and $\lambda$ in $\Lambda_\n$. 
  If $-N_{R}-2A'<\ord \lambda$, then $E\left(\lambda R(p,z,x,v)(x-z)\cdot (x-z)\right)=1$ by axiom \ref{R3} and using the morphism property of $E$ it is enough to prove that
  \begin{equation}  \label{eq:G}
  \left((p,\lambda,v,z)\mapsto \int_{x\in B_{\alpha^{-}(\varphi)(p)}} 1_{B_{A'(p)}}(x-z) E(\lambda \mathrm{grad}\:g(p,z,v)\cdot (x-z))dx \right)\cdot 1_{J_{A'(p)}}=0
  \end{equation}

  First by the change of variables formula this is equivalent to show
  $$ \left((p,\lambda,v,z)\mapsto \int_{x\in B_{\alpha^{-}(\varphi)(p)}} 1_{B_{A'(p)}}(x) E(\lambda \mathrm{grad}\:g(p,z,v)\cdot x)dx \right)\cdot 1_{J_{A'(p)}}
  =0$$
Below, we denote
$$B_{\alpha^{-}(\varphi)} = \{(p,z,v)\in P\times h[2m,0,0]\times V \mid
\ord(x-z)\geq \alpha^{-}(\varphi)(p)\}.$$
By assumption, for any $(p,x,z,v)$ in $B_{\alpha^{-}(\varphi)}$, the vector $\mathrm{grad}\:g(p,z,v)$ is different from zero, written as $(u_{i}(p,z,v))$, and we suppose that for any $(p,x,z,v)$ in $B_{\alpha^{-}(\varphi)}$ we have $$\ord u_{1}(p,z,v)=\min \ord u_{i}(p,z,v)=\ord \mathrm{grad}\:g(p,z,v)$$ 
otherwise, we stratify $B_{\alpha^{-}(\varphi)}$ and work stratum by stratum. We consider new variables
$$
(y_{1}=\lambda u_{1}(p,z,v)x_{1}+ \cdots + \lambda u_{m}(p,z,v)x_{m}, y_{2}=x_{2},
\hdots, y_{m}=x_{m}).
$$
Applying the change of variables formula and axioms of subsection \ref{push forward}, we obtain the equality between constructible functions in $(\lambda,p,z,v)$
$$
\int_{x \in B_{A'(p)}}E(\lambda \mathrm{grad}\:g(p,z,v)\cdot x)dx  =   \mathbb L^{-\lambda u_{1}(p,z,v)} \mathbb L^{-(m-1)A'(p)}
\int_{y_{1}\in B_{N(p,z,v,\lambda)}}E(y_{1})d_{y_{1}}$$
with
$N(p,z,v,\lambda)=\ord u_{1}(p,z,v)+A'(p)+\ord \lambda.$
In particular, for any $\lambda$ with $\ord \lambda < -A'(p)-N_{\mathrm{grad}}(p)$ and any $(p,x,z,v)$ in $B_{\alpha^{-}(\varphi)}$, 
we have $N(p,z,v,\lambda)<0$ by \ref{hypothese-gradient}.
Then, we conclude that the intersection $J_{A'}\cap \{(p,z,v,\lambda)\mid N(p,z,v,\lambda)\geq 0\}$ is $\emptyset$ and by \cite[Proposition 7.3.1]{CluLoe10a},
we deduce the equality of constructible functions in $(p,z,v,\lambda)$ 
$$\left(\int_{B_N}E(y_{1})d_{y_{1}}\right)1_{J_{A'}} = 
\mathbb L^{-N}
1_{\{(p,z,v,\lambda)\mid N(p,z,v,\lambda)\geq 1\}}1_{J_{A'}} = 0$$ 
which implies \ref{eq:G}.
\end{proof}

\section{Definable distributions}  \label{definable distributions}

\subsection{Operations on Schwartz-Bruhat functions}

\begin{notation} \label{notation:distributions}
For $V=h[m,0,0]$, the notation $V_x$ means $V$ using variables $x$. We will also use for a product
$V_{x}\times V_{y}$ the notation $V_{x,y}$ which elements are denoted by $(x,y)$. We will also use this notation for other Cartesian products.
For any definable set, we denote by $\pi_{P}^{P\times V}$ (or simply by $\pi_P$ when the context is clear) the canonical projection from $P\times V$ to $P$.
\end{notation}

Before giving the definition of a definable distribution, we give some properties of Schwartz-Bruhat functions.

\begin{lem}[Pull-back of Schwartz-Bruhat functions]  \label{pull-backSB}
 Let $m>0$ be an integer and $V$ be the definable set $h[m,0,0]$.
 Let $g:W \ra W'$ be a definable morphism.
 For any Schwartz-Bruhat function $\varphi$ in $S_{W'}(W'\times V)$, the pull-back $(g\times \Id_{V})^{*}\varphi$ is a Schwartz-Bruhat function in
 $S_{W}(W\times V)$.
\end{lem}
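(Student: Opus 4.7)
Define $\psi := (g \times \Id_V)^*\varphi$. The three conditions to verify for $\psi \in S_W(W\times V)$ are $(i)$ membership in $\I_W(W\times V)^{\:\exp}$, $(ii)$ the bounded support condition, and $(iii)$ the locally constant condition of Definition \ref{defn:schw}. For $(i)$, I would apply Proposition \ref{compatibilite-pull-back-push-forward}(1) to the morphism $g : W \ra W'$ in $\Def_k$ with $X := V$ and the projection $\pi_{W'} : W' \times V \ra W'$, which is the structural morphism making $\varphi$ a Schwartz-Bruhat function.

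For $(ii)$, the natural candidate is $\alpha^{-}(\psi) := \alpha^{-}(\varphi) \circ g$. The key elementary fact is that $(g \times \Id_V)^* 1_{B_\beta} = 1_{B_{\beta \circ g}}$ for any definable $\beta : W' \ra \Z$, which is immediate from the defining formula $B_\beta = \{(w',v) : \min_i \ord v_i \geq \beta(w')\}$. Applying $(g \times \Id_V)^*$ to the witness identity $\varphi \cdot 1_{B_{\alpha^{-}(\varphi) - 1}} = \varphi$, and then using the inclusion $B_{\alpha^{-}(\varphi) \circ g - 1} \subseteq B_\alpha$ for any definable $\alpha < \alpha^{-}(\psi)$, yields $\psi \cdot 1_{B_\alpha} = \psi$.

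For $(iii)$, the natural candidate is $\alpha^{+}(\psi) := \alpha^{+}(\varphi) \circ g$. Given a definable $\alpha : W \ra \Z$ with $\alpha > \alpha^{+}(\psi)$, the difficulty is that $\alpha$ need not factor as $\beta \circ g$ for some $\beta : W' \ra \Z$, so the naive pull-back of the convolution identity for $\varphi$ only handles pulled-back radii. My plan is to encode $\alpha$ into the target by means of the morphism $\tilde g := (g, \alpha) : W \ra W' \times \Z$. Writing $\pi_1 : W' \times \Z \ra W'$ for the first projection, set $\tilde\varphi := (\pi_1 \times \Id_V)^* \varphi$ on $(W' \times \Z) \times V$, and let $\Phi \in \mathcal C((W' \times \Z) \times V)^{\:\exp}$ be the convolution (over $V$, with parameter $W' \times \Z$) of $\tilde\varphi$ with the universal ball indicator $(w', n, v) \mapsto 1_{B_n}(v)$. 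Writing convolution as a push-forward along a projection via the change of variables in Remark \ref{convolution-remark} and applying Proposition \ref{compatibilite-pull-back-push-forward} along $\tilde g$, one obtains the key identity $(\tilde g \times \Id_V)^* \Phi = \psi * 1_{B_\alpha}$. On the other hand, the locally constant property of $\varphi$, interpreted with the extra $\Z$-coordinate $n$ playing the role of the radius, should yield $\Phi = \L^{-nm}\tilde\varphi$ on the subassignment $\{n > \alpha^{+}(\varphi)(w')\}$. Pulling back by $\tilde g$, which maps $W$ into this subassignment by hypothesis on $\alpha$, gives the desired $\psi * 1_{B_\alpha} = \L^{-\alpha m}\psi$.

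The main obstacle lies in justifying the uniform-in-$n$ identity $\Phi = \L^{-nm}\tilde\varphi$: this is essentially the statement that the pull-back of $\varphi$ along the projection $\pi_1$ is Schwartz-Bruhat with the obvious threshold, which is itself a special case of the lemma being proved. To avoid circularity I would handle this projection case first, exploiting the fact that $\tilde\varphi = \pi_1^*\varphi$ does not depend on the $\Z$-coordinate and using Fubini to reduce the convolution identity fibrewise to the locally constant property of $\varphi$; alternatively, one can use a cell decomposition adapted to $V$ and to the parameter structure (Theorem \ref{celldecompthm}) to rewrite $\varphi$ as a sum over definable families of balls of sufficient radius with constructible coefficients, a description manifestly preserved by pull-back along any definable morphism in the parameter.
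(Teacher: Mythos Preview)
Your argument for integrability and for the bounded support condition matches the paper's exactly. For the locally constant condition, however, your route is considerably more involved than necessary.

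The paper's proof exploits the following observation: to verify the locally constant condition it suffices to establish the convolution identity $\psi * 1_{B_{\beta^+}} = \L^{-\beta^+ m}\psi$ for the \emph{single} definable radius $\beta^+ := \alpha^+(\varphi)\circ g$; the identity for every larger definable $\beta : W \ra \Z$ then follows formally from associativity and commutativity of convolution (Proposition~\ref{conv-lin}) via
\[
\psi * 1_{B_\beta}
= \bigl(\L^{\beta^+ m}\,\psi * 1_{B_{\beta^+}}\bigr) * 1_{B_\beta}
= \L^{\beta^+ m}\,\psi * \bigl(1_{B_{\beta^+}} * 1_{B_\beta}\bigr)
= \L^{\beta^+ m}\,\psi * \bigl(\L^{-\beta m} 1_{B_{\beta^+}}\bigr)
= \L^{-\beta m}\psi,
\]
using $1_{B_{\beta^+}} * 1_{B_\beta} = \L^{-\beta m} 1_{B_{\beta^+}}$ for $\beta \geq \beta^+$. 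This is exactly the content of the remark following Definition~\ref{defn:schw}. Establishing the identity at the single radius $\beta^+$ is immediate: write $\varphi * 1_{B_{\alpha^+(\varphi)}}$ as a push-forward along $\pi_{W'\times V_x}$, apply Proposition~\ref{compatibilite-pull-back-push-forward} to swap $(g\times\Id_{V_x})^*$ past this push-forward, and recognise the result as $\psi * 1_{B_{\beta^+}}$.

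Your $\tilde g = (g,\alpha)$ construction is correct in spirit and would work, but the circularity you flag (needing the lemma for the projection $\pi_1 : W'\times\Z \ra W'$) and the workarounds you propose are entirely avoidable: the associativity trick dispatches arbitrary $\alpha > \beta^+$ in one line, with no need to encode $\alpha$ on the target side, no auxiliary $\Phi$, and no appeal to cell decomposition.
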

\begin{proof}
Let $\varphi$ be a Schwartz-Bruhat function in $S_{W'}(W'\times V)$. We denote by $\beta^-$ the definable function
$\alpha^{-}(\varphi)\circ g : W \ra \mathbb Z$.
Using the equality
 $\varphi=\varphi 1_{B_{\alpha^{-}(\varphi)}}$ we obtain
 $$(g\times \Id_V)^*\varphi = \left((g\times \Id_V)^*\varphi\right)\left( (g\times \Id_V)^*1_{B_{\alpha^{-}(\varphi)}}\right) =
 \left((g\times \Id_V)^*\varphi\right) 1_{B_{\beta^{-}}}.$$

 In particular, for any $\beta < \beta^{-}$  from the equality
 $1_{B_{\beta}}1_{B_{\beta^-}} = 1_{B_{\beta^-}}$ we deduce
 $$\left((g\times \Id_V)^* \varphi\right)1_{B_\beta} =
 (\left((g \times \Id_V)^* \varphi\right)1_{B_{\beta^-}})1_{B_\beta}  =
\left((g \times \Id_V)^* \varphi\right) 1_{B_{\beta^-}} = (g \times \Id_V)^* \varphi.$$
Let $\beta^+ = \alpha^+(\varphi) \circ g : W \ra \mathbb Z$.
By definition of the convolution product we have
$$\varphi*1_{B_{\alpha^{+}}} = \pi_{W'\times V_x!}[(\pi_{W'\times V_z})^*\varphi \cdot  1_{B_{\alpha^{+}(\varphi)}}\circ d]$$
with $V=V_x=V_z$ and $d$ the definable map from $ V_x\times V_z$ to $h[m,0,0]$ which maps $(x,z)$ on $x-z$.
In notation with integrals,
$$ \varphi*1_{B_{\alpha^{+}}} : (w,x) \mapsto \int_{\{(w,x)\}\times V_z} \varphi(w,z)1_{B_{\alpha^{+}(\varphi)(w)}}(x-z) dz.$$
By Proposition \ref{compatibilite-pull-back-push-forward} we have the equality
$$(g\times \Id_{V_x})^*\circ \pi_{W'\times V_x!} = \pi_{W \times V_x!}
\circ (g \times \Id_{V_x\times V_z})^{*} $$
implying
$$\begin{array}{cclcl}
(g\times \Id_{V_x})^*(\varphi*1_{B_{\alpha^{+}(\varphi)}}) & = & 
\pi_{W\times V_x!}
[(g\times \Id_{V_x\times V_z})^*
[(\pi_{W\times V_z})^*\varphi \cdot 1_{B_{\alpha^{+}(\varphi)}}\circ d]
] & & \\
& = & 
 \pi_{W \times V_x!}
[((g \times \Id_{V_z})^{*}\varphi) (1_{B_{\beta^{+}}} \circ d)] 
& = &
((g \times \Id_V)^{*}\varphi) * 1_{B_{\beta^+}}.
\end{array}
$$

In particular we have the equality:
$(g\times \Id_{V_x})^*\varphi = \mathbb L^{\beta^+ m }((g\times \Id_{V_x})^* \varphi)*1_{B_{\beta^{+}}}.$

Let $\beta \geq \beta^{+}$. By associativity of the convolution product we obtain

$$
\begin{array}{cclclcl}
(g \times \Id_V)^{*}\varphi & = &
\mathbb L^{\beta^+ m}\left((g \times \Id_V)^{*}\varphi\right) * 1_{B_{\beta^{+}}} 
& = & \mathbb L^{\beta^+ m}\left((g \times \Id_V)^{*}\varphi\right) * \left(1_{B_{\beta^{+}}}*\mathbb L^{\beta m}1_{B_{\beta}}\right) & &\\
& = & 
\left(\mathbb L^{\beta^+ m}\left((g \times \Id_V)^{*}\varphi\right) * 1_{B_\beta^{+}}\right)*
\mathbb L^{\beta m}1_{B_{\beta}} & = & \mathbb L^{\beta m}\left((g \times \Id_V)^{*}\varphi\right) * 1_{B_\beta}.
\end{array}
$$

We conclude that $(g \times \Id_V)^{*}\varphi$ is a Schwartz-Bruhat function in
$S_{W}(W\times V)$.
\end{proof}

\begin{rem}[Pull-back and restriction of Schwartz-Bruhat functions]
 In this remark, we generalize the previous lemma to the case of restrictions of Schwartz-Bruhat functions.
  Let $V$ be the definable set $h[m,0,0]$ for $m>0$. Let $g:W\ra W'$ be a definable morphism.  Let $X$ be an open definable subset of $V$. Let $\varphi$ be a Schwartz-Bruhat function in $S_{W'}(W'\times X)$ which means, following Definition \ref{restriction-SB}, that  $\varphi$ belongs to $S_{W'}(W'\times V)$ and satisfies the equality $\varphi 1_{W'\times X} = \varphi$.
 As, the constructible exponential function $(g\times \Id_{V})^* 1_{W'\times X}$ is equal to
 $1_{W \times X}$, we have the equalities
 $$(g\times \Id_{V})^*\varphi =
 (g\times \Id_{V})^*(\varphi 1_{W' \times X}) =
 ((g\times \Id_{V})^*\varphi)1_{W'\times X},$$
 and we conclude that $(g\times \Id_{V})^* \varphi$ belongs to $S_{W}(W\times X)$.
\end{rem}

\begin{defn}[$(g\times Id_X)$-convenient Schwartz-Bruhat functions]
 Let $m>0$ be an integer. Let $g:W \ra W'$ be a definable morphism. Let $X$ be an open definable set of $h[m,0,0]$.  
 A Schwartz-Bruhat function $\varphi$ in $S_{W}(W\times X)$ is said to be 
 \emph{$(g\times Id_X)$-convenient} if and only if 
 \begin{itemize}
	 \item[$\bullet$] $\varphi$ is $(g\times Id_X)$-integrable over $W\times X$ and $(g\times Id_X)_{!}\varphi$ belongs to $\mathcal I_{W'}(W'\times X)^{exp}$,
 \item[$\bullet$] there is a definable function $\beta^+ : W' \ra \mathbb Z$ such that 
 $\alpha^+(\varphi) = \beta^+ \circ g$,
 \item[$\bullet$] there is a definable function $\beta^- : W' \ra \mathbb Z$ such that 
 $\alpha^-(\varphi) = \beta^- \circ g$.
 \end{itemize}
\end{defn}

\begin{lem}[Push-forward of Schwartz-Bruhat functions] \label{push-forwardSB}
 Let $V$ be the definable set $h[m,0,0]$ with $m>0$. 
 Let $g:W \ra W'$ be a definable morphism. Let $\varphi$ be a Schwartz-Bruhat function in $S_{W}(W\times V)$.
 If $\varphi$ is $(g\times Id_{V})$-convenient then the push-forward 
 $(g\times Id_{V})_!\varphi$ belongs to $S_{W}(W\times V)$.
\end{lem}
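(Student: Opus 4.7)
The plan is to verify the two defining conditions of Definition \ref{defn:schw} for the push-forward $\psi := (g\times \Id_V)_!\varphi$, viewed as a function on $W'\times V$ relative to $W'$. The convenience hypothesis gives two things at once: the integrability of $\psi$ on $W'\times V$, and the existence of definable $\beta^{\pm}:W'\to \Z$ with $\alpha^{\pm}(\varphi)=\beta^{\pm}\circ g$, which will serve as the candidates $\alpha^{\pm}(\psi)=\beta^{\pm}$.

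For the bounded support condition, I would start from $\varphi=\varphi\cdot 1_{B_{\alpha^{-}(\varphi)}}$ and rewrite the cut-off as $1_{B_{\beta^{-}\circ g}}=(g\times\Id_V)^{\ast}1_{B_{\beta^{-}}}$. Applying the projection formula as stated in Remark \ref{remproj} then gives
$$
\psi = (g\times\Id_V)_!\bigl(\varphi\cdot (g\times\Id_V)^{\ast}1_{B_{\beta^{-}}}\bigr) = 1_{B_{\beta^{-}}}\cdot\psi,
$$
and the same computation with any definable $\gamma\leq \beta^{-}$ replaces $\beta^{-}$ throughout.

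For the locally constant condition, fix a definable $\gamma:W'\to\Z$ with $\gamma\geq \beta^{+}$, so that $\gamma\circ g\geq \alpha^{+}(\varphi)$. The Schwartz--Bruhat identity for $\varphi$ gives $\varphi\ast 1_{B_{\gamma\circ g}}=\L^{-(\gamma\circ g)m}\varphi$. I would push this equality forward under $(g\times\Id_V)_!$. The right-hand side becomes $\L^{-\gamma m}\psi$ by the projection formula, since $\L^{-\gamma m}=(g\times\Id_V)^{\ast}\L^{-\gamma m}$ comes from the base $W'$. For the left-hand side, unfold the convolution as $\varphi\ast 1_{B_{\gamma\circ g}}=a_{W!}(p_x^{\ast}\varphi\cdot p_y^{\ast}1_{B_{\gamma\circ g}})$, where $a_W:W\times V_x\times V_y\to W\times V_z$ sends $(w,x,y)\mapsto (w,x+y)$. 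Then I would combine four ingredients: (i) $p_y^{\ast}1_{B_{\gamma\circ g}}=(g\times\Id_{V_{x,y}})^{\ast}p_y^{\ast}1_{B_{\gamma}}$; (ii) the functoriality identity $(g\times\Id_{V_z})_!\circ a_{W!}=a_{W'!}\circ (g\times\Id_{V_{x,y}})_!$; (iii) the projection formula to extract $p_y^{\ast}1_{B_{\gamma}}$ out of the push-forward; (iv) Proposition \ref{compatibilite-pull-back-push-forward} to commute $(g\times\Id_{V_{x,y}})_!$ with $p_x^{\ast}$, producing $p_x^{\ast}\psi$. Reassembly yields $(g\times\Id_V)_!(\varphi\ast 1_{B_{\gamma\circ g}})=a_{W'!}(p_x^{\ast}\psi\cdot p_y^{\ast}1_{B_{\gamma}})=\psi\ast 1_{B_{\gamma}}$, which, compared with the push-forward of the right-hand side, gives the desired convolution identity $\psi\ast 1_{B_{\gamma}}=\L^{-\gamma m}\psi$.

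The main obstacle is bookkeeping of integrability at each step: the convenience assumption is precisely what is needed to apply the projection formula (Remark \ref{remproj}), Fubini, and the compatibility Proposition \ref{compatibilite-pull-back-push-forward}. Without the hypothesis that $(g\times\Id_V)_!\varphi$ already belongs to $\I_{W'}(W'\times V)^{\exp}$, the convolutions $\psi\ast 1_{B_{\gamma}}$ used in the locally constant condition would not be available, and the pushed-forward identities could not be promoted from formal equalities to equalities in the ring of constructible exponential functions.
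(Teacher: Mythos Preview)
Your argument is correct and follows essentially the same route as the paper's proof: both verify the bounded support condition via the projection formula applied to $1_{B_{\beta^-}}=(g\times\Id_V)^\ast 1_{B_{\beta^-}}$, and both establish the convolution identity by unfolding the convolution, commuting $(g\times\Id_V)_!$ past the fiber integration via functoriality, and using the projection formula together with Proposition~\ref{compatibilite-pull-back-push-forward}. The one cosmetic difference is that the paper first proves $\psi\ast 1_{B_{\beta^+}}=\L^{-\beta^+ m}\psi$ at the single value $\beta^+$ and then extends to all $\gamma\geq\beta^+$ by associativity of convolution (as in the proof of Lemma~\ref{pull-backSB}), whereas you push forward the identity $\varphi\ast 1_{B_{\gamma\circ g}}=\L^{-(\gamma\circ g)m}\varphi$ directly for each $\gamma\geq\beta^+$, which is slightly more economical.
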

\begin{proof}
By assumption the constructible function $\varphi$ is $(g\times Id_V)$-integrable and $(g\times Id_V)_{!}\varphi$ belongs to $\mathcal I_{W'}(W'\times V)$.
There are definable functions $\beta^+$ and $\beta^-$ from $W'$ to $Z$ such that 
 $\alpha^+(\varphi) = \beta^+ \circ g$ and $\alpha^-(\varphi) = \beta^- \circ g$.
Using the previous notations, for any definable function $\beta$ from $W'$ to $\mathbb Z$ with $\beta \leq \beta^{-}$, by the projection formula we have 
 $$ ((g\times Id_V)_{!}\varphi)\cdot 1_{B_\beta} =
 (g \times Id_V)_{!} \left(\varphi 1_{B_{\beta \circ g}} \right) 
 $$
 By definition $\beta \circ g \leq \alpha^{-}(\varphi)$ then $\varphi 1_{B_{\beta \circ g}} =  \varphi $
 and we can conclude that
 $ (g\times Id_V)_{!}\varphi 1_{B_\beta} = (g\times Id_V)_{!}\varphi.$\\
We prove now the equality
$$\left((g\times Id_V)_! \varphi\right) * 1_{B_{\beta^+}} = \mathbb L^{\beta^+ m}
(g\times Id_V)_! \varphi.$$
Indeed, using these definitions, projection formula and Fubini axiom we obtain
 $$
 \begin{array}{cclcl}
 \left((g\times Id_V)_! \varphi\right) * 1_{B_{\beta^+}} & = & \pi_{W'\times V!} 
 ((\pi_{W'\times V_z})^*((g \times Id_{V_{z}})_{!}\varphi)\cdot 1_{B_{\beta^+}}\circ d )\\ 
 & = & \pi_{W'\times V!} \left((g \times Id_{V \times V_z})_{!}((\pi_{W\times V_z})^*\varphi \cdot 1_{B_{\beta^+ \circ g}}\circ d )\right) \\
 & = & (g \times Id_V)_{!} \pi_{W\times V!}  ((\pi_{W\times V_z})^*\varphi \cdot 1_{B_{\alpha^+(\varphi)}}\circ d)) 
 & = & \mathbb L^{\beta^+ m} (g\times Id_V)_{!}\varphi.
 \end{array}
 $$
Let $\beta$ be a definable function from $W'$ to $\mathbb Z$ with $\beta \geq \beta^{+}$. Using again the associativity of the convolution product we have 
 $$
 \begin{array}{cclcl}
 (g\times Id_V)_! \varphi * 1_{B_\beta} & = & 
 (\mathbb L^{\beta^+m}(g\times Id_V)_! \varphi * 1_{B_{\beta^+}})*1_{B_\beta} 
 & = &  \mathbb L^{\beta^+m}(g\times Id_V)_! \varphi * (1_{B_{\beta^+}}*1_{B_\beta}) \\
 & = & \mathbb L^{\beta^+m}(g\times Id_V)_! \varphi * \mathbb L^{-\beta m} 1_{B_{\beta^+}}
 & = & \mathbb L^{-\beta m}(g\times Id_V)_! \varphi.  
 \end{array}
 $$
We conclude $(g\times Id_{V})_!\varphi$ belongs to $S_{W}(W\times V)$.
\end{proof}

\begin{rem}[Pushforward and restriction of Schwartz-Bruhat functions]
 In this remark, we generalize the previous lemma to the case of restrictions Schwartz-Bruhat functions. 
 Let $g:W\ra W'$ be a definable morphism. Let $V$ be the definable set $h[m,0,0]$ for $m>0$. Let $X$ be an open definable subset of $V$. 
 Let $\varphi$ be a Schwartz-Bruhat function in 
 $S_{W}(W\times X)$ which means following the definition \ref{restriction-SB} that  $\varphi$ belongs to $S_{W}(W[m,0,0])$ and satisfies $\varphi\cdot 1_{W\times X} = \varphi$. We assume $\varphi$ to be $(g\times Id_X)$-compatible. As the constructible function $(g\times Id_{V})^* 1_{W'\times X}$ is equal to 
 $1_{W \times X}$, we have by the projection axiom \ref{axiom-projection} the equalities
 $$(g\times Id_{V})_{!}(\varphi 1_{W \times X}) = 
 (g\times Id_{V})_{!}(\varphi (g\times Id_X)^* 1_{W'\times X}) = 
 1_{W'\times X}\cdot (g\times Id_{V})_{!} \varphi.$$
 meaning that $(g\times Id_{V})_{!}\varphi$ belongs to $S_{W'}(W'\times X)$.
\end{rem}

\subsection{Definable distributions}
We introduce in this subsection a notion of definable distributions. 
In the $\mathrm{ACVF}(0,0)$ context, Hrushovski and Kazhdan introduced a notion of definable distribution in \S 11 of \cite{HruKaz06}, see also \S 5 of Yin's paper \cite{Yimu-selecta}.

\begin{defn}[Definable distributions]  \label{definable distribution}
Let $P$ be a definable set in $\Def_k$. Let $V$ be the definable set $h[m,0,0]$ for $m>0$. A \emph{definable distribution} $u$ on $V$ with parameters relative to $P$, will be a family $u = (u_{\Phi_W})_{\Phi_W \in \Def_P}$ where for each $\Phi_W$ in $\Def_P$, $u_{\Phi_W}$ is a $\mathcal C(W)^{\exp}$-linear map
$$
u_{\Phi_W} \colon S_{W}(W\times V) \longrightarrow \mathcal C(W)^{\exp}
$$
such that for any $\Phi_W$ and $\Phi_{W'}$ in $\Def_P$, for any definable morphism
$g:W\ra W'$
two compatibility conditions are satisfied:

\begin{enumerate}
 \item Pull-back condition: for any $\varphi$ in $S_{W'}(W'\times V)$ we have
 $$g^{*}<u_{\Phi_{W'}},\varphi>\: = \:<u_{\Phi_W},(g\times \Id_{V})^{*}\varphi>
 \:\in \: \mathcal C(W)^{\exp}.$$
 Note that by Lemma \ref{pull-backSB}, $(g\times \Id_{V})^{*}\varphi$ belongs to $S_{W}(W\times V)$.\\

\item Push-forward condition:  for any $(g\times Id_{V})$-convenient Schwartz-Bruhat function $\varphi$ in $S_{W}(W\times V)$ we have
 \begin{enumerate}
 \item $<u_{\Phi_W},\varphi>$ is $g$-integrable,
 \item $g_{!}<u_{\Phi_W},\varphi>\: = \: <u_{\Phi_{W'}},(g\times Id_{V})_{!}\varphi> 
 \:\in\: 
 \mathcal C(W')^{\exp}.$
 \end{enumerate}
 Remark that by lemma \ref{push-forwardSB}
  $(g\times Id_{V})_{!}\varphi$ belongs to $S_{W'}(W'\times V)$.\\ 
\end{enumerate}
The set of definable distributions on $V$ with parameters relative to $P$ will be denoted by $S'_{P}(V)$.
\end{defn}

\begin{notations} Using notations \ref{notation:distributions}, for any definable morphism $\Phi_{W}$ in $\Def_{P}$, for any definable set $V$, we denote
$$u_{\Phi_{W}\times V} := u_{\pi_{P}^{P\times V}\circ (\Phi_{W}\times V)},\:\:
u_{P\times V}:=u_{\pi_{P}^{P\times V}}\:\:\mathrm{and}\:\:u_{P}:=u_{\Id_P}.$$
\end{notations}

\begin{rem}[$\mathcal C(P)^{\exp}$-module structure on $S'_{P}(V)$]
For any $\Phi_W$ in $\Def_{P}$, for any $l$ in $\mathcal C(P)^{\exp}$, the pull-back $\Phi_{W}^{*}(l)
$ belongs to $\mathcal C(W)^{\exp}$. 
There is a natural $\mathcal C(P)^{\exp}$-module structure on $S'_{P}(V)$: for any definable distribution $u$, we define a distibution $l \cdot u$ by 
$$(l\cdot u)_{\Phi_W}=(\Phi_{W}^{*}(l)u_{\Phi_W}) : \varphi \mapsto <u_{\Phi_W},\Phi_{W}^{*}(l)\varphi>,$$
for any $\Phi_W$ in $\Def_{P}$.
Indeed, let $\Phi_W$ and $\Phi_{W'}$ be two definable subsets in $\Def_{P}$ and
$g:\Phi_W \ra \Phi_{W'}$ be a definable morphism in $\Def_{P}$.
Using the definition of $(l \cdot u)_{\Phi_W'}$, the equality between $\Phi_W$ and $\Phi_{W'}\circ g$, the pull-back condition of $u$ and the definition of $(l \cdot u)_{\Phi_W}$, we obtain
 $$g^{*}<(l \cdot u)_{\Phi_W'},\varphi> =
 <(l \cdot u)_{\Phi_W},(g\times \Id_{V})^*\varphi>.$$
Let $\varphi$ be $(g\times \Id_{V})$-convenient Schwartz-Bruhat function in $S_{W}(W\times V)$. Using the projection formula, the equality
between $\Phi_W$ and $\Phi_{W'}\circ g$ and the assumption on $\varphi$, we have
 $$(g\times \Id_{V})_{!}\left(\Phi_W^*(l) \varphi \right) =
 \Phi_{W'}^*(l) (g\times \Id_{V})_{!} \varphi \in S_{W'}(W'\times V).$$
 Then, we conclude using the push-forward condition on $u$:
 $$(g\times \Id_{V})_{!}<(l \cdot u)_{\Phi_W},\varphi> =
 <(l \cdot u)_{\Phi_W'},(g\times \Id_{V})_{!}\varphi>.$$
\end{rem}

\begin{prop}[Product by a Schwartz-Bruhat function] \label{produitparSB}
Let $V$ be the definable subset $h[m,0,0]$ for $m>0$. Let $\phi$ be a Schwartz-Bruhat function in $S_{P}(P\times V)$ and $u$ be a definable distribution in $S'_{P}(V)$.
For any $\Phi_W$ in $\Def_P$ we define the $\mathcal C(W)^{\exp}$-linear form
$\left((\Phi_W \times \Id_V)^* \phi\right) \cdot u_{\Phi_W}$
by
$$<\left((\Phi_W \times \Id_V)^* \phi\right) \cdot u_{\Phi_W}, \varphi> :=
<u_{\Phi_W}, \left((\Phi_W \times \Id_V)^* \phi\right) \cdot \varphi>$$
The family $(\left((\Phi_W \times \Id_V)^* \phi\right).u_{\Phi_W})_{\Phi_W \in \Def_P}$ is a definable distribution denoted by $\phi \cdot u$.
\end{prop}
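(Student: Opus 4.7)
The plan is to verify in turn each of the three requirements built into Definition \ref{definable distribution}: first, that for every $\Phi_W$ in $\Def_P$ the prescription $\varphi \mapsto \langle u_{\Phi_W},((\Phi_W\times \Id_V)^*\phi)\cdot \varphi\rangle$ defines a $\mathcal C(W)^{\exp}$-linear map $S_W(W\times V)\to \mathcal C(W)^{\exp}$; second, that the family $(\phi\cdot u)_{\Phi_W}$ satisfies the pull-back condition along any definable morphism $g:W\to W'$ in $\Def_P$; and third, that it satisfies the push-forward condition. The key ingredients will be the stability results proved earlier: Lemma \ref{pull-backSB}, Lemma \ref{push-forwardSB}, Proposition \ref{productSB}, Proposition \ref{loc-int-SB} and the projection formula via Axiom \ref{axiom-projection} (or Proposition \ref{compatibilite-pull-back-push-forward}).

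Well-definedness follows immediately: for $\varphi\in S_W(W\times V)$, Lemma \ref{pull-backSB} gives $(\Phi_W\times \Id_V)^*\phi\in S_W(W\times V)$ and then Proposition \ref{productSB} gives $((\Phi_W\times\Id_V)^*\phi)\cdot\varphi\in S_W(W\times V)$, so the pairing with $u_{\Phi_W}$ makes sense, and $\mathcal C(W)^{\exp}$-linearity is inherited from that of $u_{\Phi_W}$. For the pull-back condition, the identity $\Phi_W=\Phi_{W'}\circ g$ yields $(g\times\Id_V)^*(\Phi_{W'}\times\Id_V)^*\phi=(\Phi_W\times\Id_V)^*\phi$, and then a short computation using the pull-back axiom for $u$ and the compatibility of pull-backs with products gives
\[
g^*\langle (\phi\cdot u)_{\Phi_{W'}},\varphi\rangle
=\langle u_{\Phi_W},(\Phi_W\times\Id_V)^*\phi\cdot (g\times\Id_V)^*\varphi\rangle
=\langle (\phi\cdot u)_{\Phi_W},(g\times\Id_V)^*\varphi\rangle.
\]

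The push-forward condition is where the real work lies. Given a $(g\times\Id_V)$-convenient $\varphi\in S_W(W\times V)$, the natural strategy is to check that $\psi:=((\Phi_W\times\Id_V)^*\phi)\cdot\varphi$ is itself $(g\times\Id_V)$-convenient, and then apply the push-forward axiom for $u$ combined with the projection formula. The explicit candidates for the bounds on $\psi$ are $\alpha^{\pm}(\psi)=\max(\alpha^{\pm}(\phi)\circ\Phi_{W'}\circ g,\beta^{\pm}\circ g)=\bigl(\max(\alpha^{\pm}(\phi)\circ\Phi_{W'},\beta^{\pm})\bigr)\circ g$, which factor through $g$ as required (using the Remark after Definition \ref{defn:schw} to legalize taking these maxima). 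For integrability, the projection formula gives $(g\times\Id_V)_!\psi=((\Phi_{W'}\times\Id_V)^*\phi)\cdot (g\times\Id_V)_!\varphi$, and since $(g\times\Id_V)_!\varphi$ is in $\mathcal I_{W'}(W'\times V)^{\exp}$ (hence locally $W'$-integrable) while $(\Phi_{W'}\times\Id_V)^*\phi$ is Schwartz-Bruhat, Proposition \ref{loc-int-SB} delivers membership in $\mathcal I_{W'}(W'\times V)^{\exp}$. Once convenience of $\psi$ is established, one reads off
\[
g_!\langle (\phi\cdot u)_{\Phi_W},\varphi\rangle=\langle u_{\Phi_{W'}},(g\times\Id_V)_!\psi\rangle=\langle (\phi\cdot u)_{\Phi_{W'}},(g\times\Id_V)_!\varphi\rangle
\]
by a second application of the projection formula.

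The main obstacle I expect is the verification of convenience for $\psi$: namely, packaging the bounds so that they really factor through $g$ (which requires the Remark that the $\alpha^\pm$ can be replaced by any smaller/larger definable function) and ensuring that the product of a Schwartz-Bruhat function with an $\mathcal I_{W'}$-class function stays in $\mathcal I_{W'}$. Everything else is essentially formal manipulation with the projection formula and the axiomatic properties of $u$.
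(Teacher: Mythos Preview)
Your proposal is correct and follows essentially the same route as the paper: well-definedness via Lemma \ref{pull-backSB} and Proposition \ref{productSB}, the pull-back condition via the factorisation $(\Phi_W\times\Id_V)^*\phi=(g\times\Id_V)^*(\Phi_{W'}\times\Id_V)^*\phi$, and the push-forward condition by checking that $\psi=((\Phi_W\times\Id_V)^*\phi)\cdot\varphi$ is again $(g\times\Id_V)$-convenient and then invoking the projection formula together with the push-forward axiom for $u$. The only cosmetic differences are that the paper takes $\alpha^{-}(\psi)=\min(\alpha^{-}(\phi)\circ\Phi_{W'}\circ g,\alpha^{-}(\varphi))$ rather than your $\max$ (both are admissible by the Remark after Definition \ref{defn:schw}), and that the paper obtains $(g\times\Id_V)_!\psi\in\mathcal I_{W'}$ directly from Lemma \ref{push-forwardSB} and Proposition \ref{productSB} rather than via Proposition \ref{loc-int-SB}; neither affects the argument.
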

\begin{proof}
Let $V$ be the definable subset $h[m,0,0]$ for $m>0$. Let $\phi$ be a Schwartz-Bruhat function in $S_{P}(P\times V)$ and $u$ be a definable distribution in $S'_{P}(V)$.
 By Lemma \ref{pull-backSB} and by Corollary \ref{productSB}, for any definable morphism $\Phi_W$ in $\Def_P$,  the constructible exponential function
 $(\Phi_W \times \Id_V)^* \phi$ is  a Schwartz-Bruhat function in $S_{W}(W\times V)$ and for any Schwartz-Bruhat function $\varphi$ in $S_{W}(W\times V)$, the product $\left((\Phi_W \times \Id_V)^* \phi\right)\cdot \varphi$ is a Schwartz-Bruhat function in
 $S_{W}(W\times V)$. Thus, the linear form $\left((\Phi_W \times \Id_V)^* \phi\right)\cdot u_{\Phi_W}$ is well defined.
 We prove now that $\phi \cdot u$ is a definable distribution.
 Let $g$ be a definable morphism from $W$ to $W'$.
 Let $\Phi_W$ and $\Phi_{W'}$ be two definable morphisms in $\Def_P$ such that $\Phi_W=\Phi_{W'}\circ g$.  We have the identity 
 \begin{equation} \label{pullbackphi}
   (\Phi_W \times \Id_{V})^*\phi = (g\times \Id_X)^* (\Phi_{W'}\times \Id_{V})^* \phi.
 \end{equation}
 Let $\varphi$ be a Schwartz-Bruhat function in $S_{W'}(W'\times V)$. Using the definition, \ref{pullbackphi} and the compatibility relations of $u$ we deduce
 $$g^*<\left((\Phi_{W'}\times \Id_{V})^*\phi\right)\cdot u_{\Phi_{W'}},\varphi> =
 <\left((\Phi_{W}\times \Id_{V})^*\phi\right)\cdot u_{\Phi_{W}},(g\times \Id_V)^*\varphi>.
 $$
 Let $\varphi$ be a Schwartz-Bruhat function in $S_{W}(W\times V)$ which is
 $(g\times \Id_V)$-compatible, by relation \ref{pullbackphi} and projection formula \ref{axiom-projection}, the constructible exponential function
   $\left((\Phi_W\times \Id_V)^*\phi\right)\cdot \varphi$ is also $(g\times \Id_V)$-integrable with the relation
   $$(g\times \Id_V)_{!}(\left((\Phi_W\times \Id_{V})^*\phi\right)\cdot\varphi) =
   \left((\Phi_{W'}\times \Id_{V})^*\phi\right)\cdot(g\times \Id_V)_{!}\varphi.$$
The product $\left((\Phi_{W}\times \Id_{V})^*\phi\right)\cdot\varphi$ is a Schwartz-Bruhat in
  $S_{W}(W\times V)$ with
  $$\alpha^{-}(\left((\Phi_{W}\times \Id_{V})^*\phi\right)\cdot \varphi) =
  \min (\Phi_{W'}(\alpha^-(\phi))\circ g, \alpha^{-}(\varphi))$$
  and
  $$\alpha^{+}(\left((\Phi_{W}\times \Id_{V})^*\phi\right)\cdot\varphi) =
  \max (\Phi_{W'}(\alpha^+(\phi))\circ g, \alpha^{+}(\varphi)),$$
  using Proposition\ref{productSB} and the equalities
  $$
  \alpha^{-}((\Phi_{W}\times \Id_{V})^*\phi) = \Phi_{W'}(\alpha^-(\phi))\circ g \:\:\mathrm{and} \:\:
  \alpha^{+}((\Phi_{W}\times \Id_{V})^*\phi) = \Phi_{W'}(\alpha^+(\phi))\circ g.
  $$
  We deduce from that point and the $(g\times \Id_{V})$-compatibility of $\varphi$ that
  $\left((\Phi_W \times \Id_{V})^*\phi\right)\cdot\varphi$ is also $(g\times \Id_{V})$-compatible and in particular 
  $<\left((\Phi_W \times \Id_{V})^*\phi\right)\cdot u_{\Phi_W}, \varphi>$ equal to
  $<u_{\Phi_W},\left((\Phi_W \times \Id_{V})^*\phi\right)\cdot\varphi>$ is $g$-integrable
  and by the projection formula we have
  $$
  \begin{array}{ccl} 
	  g_{!}<\left((\Phi_W \times \Id_{V})^*\phi\right)\cdot u_{\Phi_W}, \varphi> & = & 
	  <u_{\Phi_{W'}},(g\times \Id_{V})_{!}(\left((\Phi_W \times \Id_{V})^*\phi\right)\cdot \varphi)> \\
  &= & 
  <u_{\Phi_{W'}},\left((\Phi_{W'}\times \Id_{V})^*\phi\right)\cdot(g\times \Id_V)_{!}\varphi> \\
  & = &
  <\left((\Phi_{W'}\times \Id_{V})^*\phi\right) u_{\Phi_{W'}},(g\times \Id_V)_{!}\varphi>,
  \end{array}
  $$
  and the result follows.
\end{proof}

\begin{example}[Restriction of a definable distribution]
Let $V$ be the definable subset $h[m,0,0]$ for $m>0$. Let $\phi$ be a Schwartz-Bruhat function in $S_{P}(P\times V)$ and $u$ be a definable distribution in $S'_{P}(V)$. Let $X$ be an open definable set of $h[m,0,0]$. By the previous proposition we define the restriction of $u$ to $X$ as the
product $1_{P\times X}\cdot u$ meaning that for any definable morphism $\Phi_W$ in $\Def_P$ and for any Schwartz-Bruhat function in $S_{W}(W\times X)$ we have
$$<(1_{P\times X}u)_{\Phi_W},\varphi> : = <u_{\Phi_W}, 1_{W\times X}\cdot \varphi>.$$
We will denote by $S'_P(X)$ the set of definable distribution $1_{P\times X} \cdot u$.
\end{example}

\begin{example}[Locally integrable functions]  \label{ex-loc-int}
Let $P$ be a definable subset in $\Def_k$. Let $m>0$ be an integer and $X$ be a definable open subset of $h[m,0,0]$.
Let $u$ be a $P$-\emph{locally integrable function} on $P\times X$ (see Definition \ref{loc-integrable}).
For any definable map $\Phi_{W}:W\ra P$ and the projection $\pi_W : W\times X \ra W$,  we denote by $u_{\Phi_W}$ the pull-back $(\Phi_{W}\times Id_X)^*u$. By Proposition \ref{loc-int-SB}, that function induces a $\mathcal C(W)^{\exp}$-linear 
\begin{equation}\label{distributionfct}
	<u_{\Phi_W},\varphi> = \pi_{W!} \left(\left((\Phi_W \times \Id_{X})^{*}u\right)\cdot \varphi\right) \in \mathcal C(W)^{\exp}
\end{equation}
namely, in notation with integrals:
$$ <u_{\Phi_W},\varphi> = w\in W \mapsto \int_{X}u(\Phi_{W}(w),x)\varphi(w,x)dx.$$
We prove that $(u_{\Phi_W})_{\Phi_W \in \Def_P}$ is a definable distribution. Let's prove the compatibility conditions. Let $\Phi_W$ and $\Phi_{W'}$ be two definable subsets in $\Def_{P}$ and
$g:\Phi_W \ra \Phi_{W'}$ be a definable morphism in $\Def_{P}$.
Let $\varphi$ be a Schwartz-Bruhat function in $S_{W'}(W'\times X)$. Using definition \ref{distributionfct} the equality $\Phi_{W'}\circ g = \Phi_{W}$,  Proposition \ref{compatibilite-pull-back-push-forward} we obtain
 $$g^{*}<u_{\Phi_{W'}},\varphi> =  <u_{\Phi_{W}},(g\times \Id_{X})^{*}\varphi>.$$
 Let $\varphi$ be a Schwartz-Bruhat function in $S_{W}(W\times X)$ which is $(g\times \Id_X)$-compatible.
 The constructible function $u_{\Phi_{W}}\cdot\varphi$ is equal to $\left((g\times \Id_X)^*(u_{\Phi_{W'}})\right)\cdot\varphi$. 
 As $\varphi$ is $(g\times Id_X)$-integrable, we deduce from the projection axiom (and remark \ref{remproj}) that $u_{\Phi_{W}}\cdot\varphi$
 is $(g \times Id_X)$-integrable with the equality 
  $$(g\times \Id_X)_{!}(u_{\Phi_W}\cdot\varphi) = u_{\Phi_{W'}}\cdot((g\times \Id_{X})_{!}\cdot\varphi),$$
  As $(g\times \Id_{X})_{!}\varphi$ is a Schwartz-Bruhat function by lemma \ref{push-forwardSB}, the constructible function 
  $u_{\Phi_{W'}}\cdot((g\times \Id_{X})_{!}\cdot\varphi)$ is $\pi_{W'}$-integrable by Proposition \ref{loc-int-SB}, then using the equality 
  $\pi_{W'}\circ (g\times Id_X) = g \circ \pi_{W}$ and Fubini axiom, we deduce that the constructible function
  $<u_{\Phi_W},\varphi>$ is $g$-integrable, with the equality
  $$g_{!}<u_{\Phi_W},\varphi>\: = \: <u_{\Phi_{W'}},(g\times Id_{V})_{!}\varphi>.$$
\end{example}
\begin{example}[Dirac measures]  \label{Dirac}
Let $P$ be a definable subset, $m$ be a positive integer and $X$ be an open definable subset of
$h[m,0,0]$. Fix a point $x_{0}$ in $X$ and $(Id_W\times i_{x_0})^*$ be the restriction morphism from $\mathcal C(W\times X)^{\exp}$ to 
$\mathcal C(W\times \{x_0\})^{\exp}$. For any $\Phi_W$ in $\Def_P$ we define the $\mathcal C(W)^{\exp}$-linear form $\delta_{x_0,\Phi_W}$ by
$$<\delta_{x_0,\Phi_W},\varphi> = (Id_W\times i_{x_0})^*(\varphi).$$
We observe that the family $(\delta_{x_0,\Phi_W})$ is a distribution (keeping track of the rational point $\{x_0\}$).
Let's check the compatibility conditions.
Let $\Phi_W$ and $\Phi_{W'}$ be two definable sets in $\Def_{P}$ and $g:\Phi_W \ra \Phi_{W'}$ be a definable morphism in $\Def_{P}$.
For any Schwartz-Bruhat function $\varphi$ in $S_{W'}(W'\times X)$, using the equality of functions
$$(g\times Id_{X}) \circ (Id_{W}\times i_{x_0}) =  (Id_{W'}\times Id_{x_0})\circ (g \times Id_{x_0})$$
on $W\times \{x_0\}$, we deduce the equality
$$(g\times Id_{x_0})^{*}<\delta_{x_0,\Phi_{W'}},\varphi>=<\delta_{x_0,\Phi_{W}},(g\times \Id_X)^{*}\varphi>.$$

For any $(g\times Id_{X})$-compatible Schwartz-Bruhat function $\varphi$ in $S_{W}(W\times X)$, $\varphi$ is $(g\times Id_X)$-integrable then, (for instance by Proposition 14.2.1 in \cite{CluLoe08a}), the constructible function $(Id_W\times i_{x_0})^*(\varphi)$ is $(g\times Id_{x_0})$-integrable 
with the equality
$$(g\times Id_{x_0})_{!}<\delta_{x_0,\Phi_W},\varphi>=<\delta_{x_0,\Phi_{W'}},(g\times \Id_{X})_{!}\varphi>.$$
\end{example}

\subsection{Definable distribution and average formula}

In the $p$-adic setting, a distribution $u \in S'(\mathbb Z_{p})$ is a linear form on the space $S(\mathbb Z_{p})$ of Schwartz-Bruhat functions on $\mathbb Z_p$ see for instance
Chapter 7 of \cite{Igusabook}. A Schwartz-Bruhat function $\varphi$ on $\mathbb Z_{p}$, is a locally constant function with a bounded support. In particular there is an integer $r$ such that the functions $\varphi$ and $x\mapsto \varphi(x) <u,1_{B(x,l+1)}>$ are constant on any ball of valuative radius $l\geq r$.
As all the balls of valuative radius $l+1$ are disjoint, we can write
$$\varphi = \sum_{(a_{i})\in \mathbb F_{p}^{l+1}} \varphi(a)1_{B(a,l+1)}$$
where for any $(a_{i})$ in  $F_{p}^{l+1}$, $a$ is the sum $\sum_{i=0}^{l+1}a_{i}p^i$.
Then, for any such $a$ we have
$$\int_{B(a,l+1)} \varphi(x) <u,1_{B(x,l+1)}>dx = p^{-(l+1)} \varphi(a) < u, 1_{B(a,l+1)}>,$$
and by Chasles relation we obtain
$$<u,\varphi> =  \sum_{(a_{i})\in \mathbb F_{p}^{l+1}} <u,\varphi(a)1_{B(a,l+1)}> =
p^{(l+1)}\int_{\mathbb Z_{p}}\varphi(x)<u,1_{B(x,l+1)}>dx.$$
We conclude that a distribution is known as soon as it is known on characteristic functions of balls, and furthermore evaluations should satisfy this average formula.
The situation is similar in the motivic case. We introduce the notation

\begin{notation} \label{deftalpha}
 Let $P$ be a definable set in $\Def_k$. Let $V$ be the definable set $h[m,0,0]$ with $m>0$.
 Let $\Phi_W$ be a definable morphism in $\Def_P$.
 For any definable function $\alpha^-$ and $\alpha^+$ from $W$ to $\mathbb Z$, satisfying $\alpha^+ \geq \alpha^-$ 
 we consider
 $$t_{\alpha^-,\alpha^+} := (w,z,x) \mapsto
 1_{B_{\alpha^-}(w)}(x)1_{B_{\alpha^+}(w)}(x-z) \in \mathcal S_{W\times V_z}(W\times V_{z,x}).$$
\end{notation}

\begin{prop}[Motivic average formula]  \label{average-formula}
 Let $P$ be a definable set in $\Def_k$. Let $V$ be the definable set $h[m,0,0]$ with $m>0$.
 Let $u$ be a definable distribution in $S'_{P}(V)$. Let $\Phi_W$ be a definable morphism in $\Def_P$.
 Let $\varphi$ be a Schwartz-Bruhat function in $S_{W}(W\times V_z)$.
 For any definable function $\alpha^-$ and $\alpha^+$ from $W$ to $\mathbb Z$ with $\alpha^+ \geq \alpha^{+}(\varphi)$ and $\alpha^- \leq \alpha^{-}(\varphi)$
 we have the equality
 $$<u_{\Phi_W},\varphi>=
 \pi_{W!}\left(\mathbb L^{\alpha m} \varphi
 <u_{\Phi_W \times V_z},t_{\alpha^-,\alpha^+} > \right)
 \in \mathcal C(W)^{\exp}$$
 with $\pi_W : W\times V_z \ra W$ the canonical projection.
In notation with integrals, the formula is
 $$<u_{\Phi_W},\varphi>=w \mapsto
 \mathbb L^{\alpha^+(w) m} \int_{z\in V_z}\varphi(w,z)
 <u_{\Phi_W\times V_z},1_{B_{\alpha^+(w)}}(\cdot - z)1_{B_{\alpha^-}}>dz.
$$
\end{prop}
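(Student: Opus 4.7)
The plan is to realize the Schwartz-Bruhat function $\varphi$ (viewed on $V_x$ after renaming the coordinate) as the pushforward along the projection $\pi_W\times \Id_{V_x}$ of a cleverly chosen Schwartz-Bruhat function $\psi$ on $W\times V_z\times V_x$, and then to invoke the push-forward compatibility axiom in Definition \ref{definable distribution} to move the distribution $u$ across the projection $\pi_W$.

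Concretely, I would set $\psi(w,z,x) := \mathbb L^{\alpha^+(w) m}\,\varphi(w,z)\,t_{\alpha^-,\alpha^+}(w,z,x)$. The function $t_{\alpha^-,\alpha^+}$ is already in $S_{W\times V_z}(W\times V_z\times V_x)$ by Notation \ref{deftalpha}, and the extra factor depends only on $(w,z)$, so it acts as a $\mathcal C(W\times V_z)^{\exp}$-scalar for the $V_x$-variable and $\psi$ itself is Schwartz-Bruhat on $V_x$ with parameters in $W\times V_z$. Crucially, one reads off $\alpha^-(\psi)(w,z)=\alpha^-(w)$ and $\alpha^+(\psi)(w,z)=\alpha^+(w)$, so both bounds factor through $\pi_W$.

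Next I would compute $(\pi_W\times \Id_{V_x})_!\psi$. Integration over $z \in V_z$ recognizes the convolution $\varphi\ast 1_{B_{\alpha^+}}$ of Remark \ref{convolution-remark}, which by the locally constant condition of Definition \ref{defn:schw} (using $\alpha^+\geq \alpha^+(\varphi)$) equals $\mathbb L^{-\alpha^+(w) m}\varphi(w,x)$. The prefactor $\mathbb L^{\alpha^+(w) m}$ cancels, and the surviving factor $1_{B_{\alpha^-(w)}}(x)$ is absorbed by $\varphi$ because $\alpha^-\leq \alpha^-(\varphi)$ together with the bounded support property gives $\varphi=\varphi\cdot 1_{B_{\alpha^-}}$. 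This yields the clean identity $(\pi_W\times \Id_{V_x})_!\psi = \varphi$. Combined with the factorization of $\alpha^\pm(\psi)$, this also verifies that $\psi$ is $(\pi_W\times \Id_{V_x})$-convenient in the sense of Definition \ref{definable distribution}.

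Finally, applying the push-forward axiom to $\pi_W:W\times V_z \to W$ and to $\psi$ yields
$$\pi_{W!}\,\langle u_{\Phi_W\times V_z},\, \psi\rangle \;=\; \langle u_{\Phi_W},\, (\pi_W\times \Id_{V_x})_!\psi\rangle \;=\; \langle u_{\Phi_W},\varphi\rangle,$$
and the $\mathcal C(W\times V_z)^{\exp}$-linearity of $u_{\Phi_W\times V_z}$ lets me factor the scalar $\mathbb L^{\alpha^+ m}\varphi$ out of the left-hand side, producing the claimed average formula. The only genuinely delicate step I foresee is the verification of $(\pi_W\times \Id_{V_x})$-convenience, in particular that both $\alpha^\pm(\psi)$ factor through $\pi_W$: this is precisely the reason for peeling the prefactor $\mathbb L^{\alpha^+(w) m}\varphi(w,z)$ off the $V_x$-Schwartz-Bruhat piece $t_{\alpha^-,\alpha^+}$ rather than mixing it in, and once that is in place the rest reduces to the convolution identity and the support hypotheses on $\varphi$.
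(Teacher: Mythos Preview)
Your argument is correct and matches the paper's proof essentially line for line: you introduce the same auxiliary function $\psi=\varphi\cdot t_{\alpha^-,\alpha^+}$ (the paper omits your harmless prefactor $\mathbb L^{\alpha^+ m}$), check that it is $(\pi_W\times\Id_{V_x})$-convenient because $\alpha^\pm(\psi)$ factor through $\pi_W$, identify $(\pi_W\times\Id_{V_x})_!\psi$ with $\varphi$ via the convolution and support identities, and conclude by the push-forward axiom of Definition~\ref{definable distribution}. The only cosmetic difference is the order of presentation: the paper first rewrites $\varphi$ as $\mathbb L^{\alpha^+ m}(\pi_W\times\Id_{V_x})_!(\varphi\cdot t_{\alpha^-,\alpha^+})$ and then names $\psi$, whereas you build $\psi$ first and compute its push-forward afterward.
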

\begin{proof}
 By the inequalities
 $$\alpha^+ \geq \alpha^+(\varphi) \geq \alpha^-(\varphi) \geq \alpha^- $$
 and the convolution identity on $\varphi$, we have
 $$\varphi = \mathbb L^{\alpha^+ m}(\varphi * 1_{B_{\alpha^+}}) \cdot 1_{B_{\alpha^-}}=
 \mathbb L^{\alpha^+ m} (\pi_{W}\times \Id_{V_x})_{!} \left(\varphi \cdot t_{\alpha^-,\alpha^+}\right).
 $$
 We denote by $\psi$ the constructible exponential function $\varphi \cdot t_{\alpha^-,\alpha^+}$.
 It is a $(\pi_W\times \Id_{V_x})$-convenient Schwartz-Bruhat function in $S_{W \times V_z}(W\times V_z \times V_x)$ with
 $$ \alpha^{-}(\psi) \colon   \begin{cases}
 W\times V_z \longrightarrow \mathbb Z \\
 (w,z)  \longmapsto  \alpha^{-}(\psi)(w,z) = \alpha^{-}(\varphi)(w)
    \end{cases}
    \:\text{and}\:
 \alpha^{+}(\psi) \colon
    \begin{cases}
W \times V_z \longrightarrow \mathbb Z \\
     (w,z)  \longmapsto  \alpha^{+}(\psi)(w,z) = \alpha^+(w).
    \end{cases}
 $$
The result follows from the push-forward compatibility conditions on $u$.
\end{proof}

\begin{example}[Splitting balls] \label{splitballs}
Let $P$ be a definable set in $\Def_k$.
Let $V$ be the definable set $h[m,0,0]$ with $m>0$.
Let $u$ be a definable distribution in $S'_{P}(V)$.
Let $\Phi_W : W \ra P$ be a definable morphism.
Following the motivic average formula, for any definable function $\beta^+$, $\beta^-$, $\alpha^+$, $\alpha^-$ from $W$ to $\mathbb Z$ satisfying
$\beta^+ \geq \alpha^+ \geq \alpha^- \geq \beta^-$ we have the relation
$$<u_{\Phi_W \times V_z},t_{\alpha^-,\alpha^+}>=(\pi_{W \times V_z })_{!}
\left( \mathbb L^{\beta^+ m} t_{\alpha^-, \alpha^+}
<u_{(\Phi_W \times V_z) \times V_x},s\mapsto t_{\beta^-, \beta^+}(w,x,s)>\right).
$$
\end{example}

\begin{rem}
By the motivic average formula, the following extension theorem shows that it is enough to define a definable distribution $(u_{\Phi_W})$ in $S'_{P}(X)$ only on the Schwartz-Bruhat functions of type $t_{\alpha^-, \alpha^+}$
with convenient compatibility relations.
\end{rem}

\begin{thm}[Extension theorem]  \label{extension-theorem}
Let $P$ be a definable set in $\Def_k$. Let $V$ be the definable set $h[m,0,0]$ with $m>0$.
For any definable morphism $\Phi_W$  in $\Def_P$, we consider a $\mathcal C(W\times V_z)^{\exp}$-linear map
$u_{\Phi_W\times V_z}$ defined on the
$\mathcal C(W\times V_z)^{\exp}$-submodule of $S_{W\times V_z}(W\times V_{z,x})$ generated by the constructible exponential functions $t_{\alpha^-, \alpha^+}$ defined in \ref{deftalpha}, with values in $\mathcal C(W\times V_z)^{\exp}$.
We assume:
\begin{itemize}
\item[$\bullet$] \emph{Integrability condition}. 
For any $t_{\alpha^-,\alpha^+}$ defined in \ref{deftalpha}, the constructible function $<u_{\Phi_{W \times V_z}}, t_{\alpha^-,\alpha^+}>$ is $W$-integrable. 

\item[$\bullet$] \emph{Pullback condition}.
For any definable set $\Phi_W$ and $\Phi_{W'}$ in $\Def_P$, for any definable morphism
$g:W\ra W'$ in $\Def_P$ such that $\Phi_W = g \circ \Phi_{W'}$,
for any definable functions $\alpha^-$ and $\alpha^+$ from $W$ to $\mathbb Z$, satisfying $\alpha^+ \geq \alpha^-$ 
we have 
\begin{equation} \label{pull} (g\times \Id_{V_z})^{*}(<u_{\Phi_{W'}\times V_z},t_{\alpha^-,\alpha^+}>)\: = \:
	<u_{\Phi_W\times V_z},(g\times \Id_{V_z \times V_x})^{*}(t_{\alpha^-,\alpha^+})> \:\in \:\mathcal C(W\times V_z)^{\exp}.
\end{equation}

\item[$\bullet$] \emph{Pushforward condition}. 
	Let $\Phi_W : W \ra P$ be a definable morphism. For any definable function $\beta^+$, $\beta^-$, $\alpha^+$, $\alpha^-$ from $W$ to $\mathbb Z$ with
$\beta^+ \geq \alpha^+ \geq \alpha^- \geq \beta^-$ we have the relation
\begin{equation} \label{push}
	<u_{\Phi_W \times V_z},t_{\alpha^-,\alpha^+}> = 
	(\pi_{W \times V_z })_{!} \left( \mathbb L^{\beta^+ m} t_{\alpha^-, \alpha^+} <u_{(\Phi_W \times V_z) \times V_x},s\mapsto t_{\beta^-, \beta^+}(w,x,s)>\right).
\end{equation}
\end{itemize}
With these assumptions, this family extends uniquely as a definable distribution $(u_{\Phi_W})$ in $S'_{P}(V)$ using the \textit{motivic average formula} : for any $\Phi_W$ in $\Def_P$, $\varphi$ in $S_{W}(W\times V)$
\begin{equation} \label{defmoyenne}
	<u_{\Phi_W},\varphi> :=
 \mathbb L^{\alpha^+(\varphi) m}\pi_{W!}\left( \varphi
 <u_{\Phi_W \times V_z},t_{\alpha^-(\varphi), \alpha^+(\varphi)}> \right)
 \in \mathcal C(W)^{\exp}.
 \end{equation}
\end{thm}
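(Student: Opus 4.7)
The strategy will be to define $u_{\Phi_W}$ on all of $S_W(W\times V)$ by the motivic average formula (\ref{defmoyenne}), then verify in turn that: the right-hand side is independent of the auxiliary pair $(\alpha^-,\alpha^+)$ used; the resulting linear form extends the data given on generators $t_{\gamma^-,\gamma^+}$; and the resulting family $(u_{\Phi_W})_{\Phi_W\in \Def_P}$ satisfies the pull-back and push-forward compatibilities of Definition \ref{definable distribution}. Uniqueness is automatic because any distribution extending the data must satisfy Proposition \ref{average-formula}.

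The heart of the argument is well-definedness. Given two admissible pairs $(\alpha_1^\pm)$ and $(\alpha_2^\pm)$ for a Schwartz--Bruhat function $\varphi$, I will reduce, by comparing each with the common pair $(\min\alpha_i^-,\max\alpha_i^+)$, to the case $\alpha_2^-\leq \alpha_1^-$ and $\alpha_2^+\geq \alpha_1^+$. I then apply the push-forward hypothesis (\ref{push}) over the base $\Phi_W\times V_z$ with $(\alpha^\pm)=(\alpha_1^\pm)$ and $(\beta^\pm)=(\alpha_2^\pm)$ to rewrite $<u_{\Phi_W\times V_z},t_{\alpha_1^-,\alpha_1^+}>$ as an iterated push-forward involving $t_{\alpha_2^-,\alpha_2^+}$. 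Substituting into (\ref{defmoyenne}), Fubini and the projection formula (Remark \ref{remproj}) collapse the two nested push-forwards, and the support/constancy of $\varphi$ (namely $\varphi\cdot 1_{B_{\alpha_1^-}}=\varphi$ and $\varphi*1_{B_{\alpha_1^+}}=\mathbb L^{-\alpha_1^+ m}\varphi$) make the auxiliary indicator $t_{\alpha_1^-,\alpha_1^+}$ disappear, producing exactly the right-hand side of (\ref{defmoyenne}) for the pair $(\alpha_2^\pm)$. Recovery of the prescribed value on $\varphi=t_{\gamma^-,\gamma^+}$ is then the content of (\ref{push}) applied with $\alpha^\pm=\beta^\pm=\gamma^\pm$, and $\mathcal C(W)^{\exp}$-linearity is immediate once a common admissible pair is chosen for both inputs.

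For the two compatibility axioms of Definition \ref{definable distribution}, let $g:W\to W'$ be a definable morphism in $\Def_P$. For the pull-back axiom, given $\varphi\in S_{W'}(W'\times V)$, Lemma \ref{pull-backSB} ensures $(g\times\Id_V)^*\varphi\in S_W(W\times V)$ with bounds $\alpha^\pm(\varphi)\circ g$. Applying (\ref{defmoyenne}) on both sides, the desired equality reduces to the hypothesis (\ref{pull}), which gives the pull-back of the generator values $<u_{\Phi_{W'}\times V_z},t_{\alpha^-(\varphi),\alpha^+(\varphi)}>$, combined with Proposition \ref{compatibilite-pull-back-push-forward} to swap $g^*$ with $\pi_{W'!}$. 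For the push-forward axiom, given a $(g\times\Id_V)$-convenient $\varphi\in S_W(W\times V)$, the convenience hypothesis supplies $\beta^\pm:W'\to\Z$ with $\alpha^\pm(\varphi)=\beta^\pm\circ g$, and Lemma \ref{push-forwardSB} gives $(g\times\Id_V)_!\varphi\in S_{W'}(W'\times V)$ with bounds $\beta^\pm$. Applying (\ref{defmoyenne}) to each side with pairs $(\beta^\pm\circ g)$ and $(\beta^\pm)$ respectively, the equality follows from (\ref{pull}) (to compare the inner generator values through the pull-back of $t_{\beta^-,\beta^+}$ along $g\times\Id_{V_{z,x}}$, which is $t_{\beta^-\circ g,\beta^+\circ g}$), the projection formula, and the relation $\pi_{W'}\circ(g\times\Id_V)=g\circ\pi_W$ combined with Fubini.

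The main obstacle will be the first step: one must carefully track how the push-forward hypothesis (\ref{push}) interlocks with the outer integration in the motivic average formula, and verify that the cancellation of $t_{\alpha_1^-,\alpha_1^+}$ against the support/constancy properties of $\varphi$ produces exactly the formula for $(\alpha_2^\pm)$. Once this is in place, steps (ii)--(iv) are variations on the bookkeeping already used in Lemmas \ref{pull-backSB} and \ref{push-forwardSB}, lifted to include the auxiliary $V_z$-direction.
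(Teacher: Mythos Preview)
Your proposal is correct and follows essentially the same approach as the paper: define via (\ref{defmoyenne}), verify independence of the chosen pair using (\ref{push}) together with Fubini and the projection formula, then check the two compatibility axioms via (\ref{pull}), Proposition \ref{compatibilite-pull-back-push-forward}, and the convenience hypothesis.

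One small point you elide in the well-definedness step: to apply the projection formula when collapsing the nested push-forwards, you need to know that the inner factor $\langle u_{(\Phi_W\times V_z)\times V_x},\, s\mapsto t_{\alpha_2^-,\alpha_2^+}(w,x,s)\rangle$ is actually independent of $z$, i.e.\ equals the pull-back of $\langle u_{\Phi_W\times V_x},\, t_{\alpha_2^-,\alpha_2^+}\rangle$ along the projection $W\times V_{z,x}\to W\times V_x$. This is precisely the hypothesis (\ref{pull}) applied to that projection, and the paper invokes it explicitly at this point. Without it the projection formula does not apply and the $z$-integration cannot be isolated to produce $\varphi * 1_{B_{\alpha_1^+}}$. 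Once you add this sentence, your argument matches the paper's.
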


\begin{proof}
Uniqueness comes from the motivic average formula. The existence is the main point to prove.
Let $\Phi_W$ be in $\Def_P$. Let $\varphi$ be a Schwartz-Bruhat function in $S_{W}(W\times V)$.
Using the fact that the constructible exponential function
$<u_{\Phi_W \times V_z},t_{\alpha^-(\varphi),\alpha^+(\varphi)}>$
is $\pi_W$-integrable by the integrability condition, using Proposition \ref{loc-int-SB}, we define
$<u_{\Phi_W},\varphi>$ by equation \ref{defmoyenne}. We check that this definition does not depend on the choice of $\alpha^{-}(\varphi)$ and $\alpha^{+}(\varphi)$. 
Let $\beta^{-}\leq \alpha^{-}(\varphi)$ and $\beta^{+} \geq \alpha^{+}(\varphi)$.
By \ref{push}, we have the equality
$$\mathbb L^{\alpha^+(\varphi)m}\left(\pi_{W}^{W\times V_z}\right)_{!}(\varphi <u_{\Phi_W\times V_z},t_{\alpha^{-},\alpha^{+}}>) =$$ 
$$\mathbb L^{\alpha^+(\varphi)m}\left(\pi_{W}^{W\times V_z}\right)_{!}\left(\varphi \left(\pi_{W\times V_z}^{W\times V_{z,x}}\right)_{!} 
\left(\mathbb L^{\beta^{+}m}t_{\alpha^-,\alpha^+}<u_{(\Phi_W \times V_z)\times V_x},s\to t_{\beta^-,\beta^+}(w,x,s)>\right)\right)$$
Applying Fubini axiom, we obtain
$$\mathbb L^{\alpha^+(\varphi)m}\left(\pi_{W}^{W\times V_z}\right)_{!}(\varphi <u_{\Phi_W\times V_z},t_{\alpha^{-},\alpha^{+}}>) =$$ 
$$\mathbb L^{\beta^+(\varphi)m}\left(\pi_{W!}^{W\times V_x}\right)_{!}\left(\pi_{W\times V_x}^{W\times V_z \times V_x}\right)_{!} 
\left(\mathbb L^{\alpha^{+}m}\varphi t_{\alpha^-,\alpha^+}<u_{(\Phi_W \times V_z)\times V_x},s\to t_{\beta^-,\beta^+}(w,x,s)>\right).$$
By the pull-back condition, we have the equality,
$$<u_{(\Phi_W \times V_z)\times V_x},s\to t_{\beta^-,\beta^+}(w,x,s)> = \left(\pi_{W\times V_x}^{W\times V_z \times V_x}\right)^{*}\left(
<u_{\Phi_W \times V_x},s\to t_{\beta^-,\beta^+}(w,x,s)>
\right)$$
then using the projection axiom (and remark \ref{remproj}) we obtain the equality
$$\mathbb L^{\alpha^+(\varphi)m}\left(\pi_{W}^{W\times V_z}\right)_{!}(\varphi <u_{\Phi_W\times V_z},t_{\alpha^{-},\alpha^{+}}>) =$$ 
$$\mathbb L^{\beta^+(\varphi)m}\left(\pi_{W!}^{W\times V_x}\right)_{!}
\left(<u_{\Phi_W \times V_x},t_{\beta^-,\beta^+}>
\left(\pi_{W\times V_x}^{W\times V_z \times V_x}\right)_{!} 
\left(\mathbb L^{\alpha^{+}m}\varphi t_{\alpha^-,\alpha^+}\right)
\right)
,$$
we conclude applying the convolution condition on $\varphi$ 
$$\mathbb L^{\alpha^+(\varphi)m}\left(\pi_{W}^{W\times V_z}\right)_{!}(\varphi <u_{\Phi_W\times V_z},t_{\alpha^{-},\alpha^{+}}>) = 
\mathbb L^{\beta^+(\varphi)m}\left(\pi_{W!}^{W\times V_x}\right)_{!} \left(<u_{\Phi_W \times V_x},t_{\beta^-,\beta^+}>\varphi \right).$$

It follows by $\mathcal C(W)^{\exp}$-linearity of the integral that $u_{\Phi_W}$ is also
$\mathcal C(W)^{exp}$-linear.
We prove now the compatibility relations.
Let $g:W \ra W'$ be a definable morphism. Let $\Phi_W$ and $\Phi_{W'}$ be two definable sets in $\Def_P$ such that $\Phi_W = \Phi_{W'}\circ g$.
\begin{itemize}
	\item[$\bullet$] 
Let $\varphi$ be a Schwartz-Bruhat function in $S_{W'}(W'\times V)$.
From equality \ref{defmoyenne} for $\Phi_{W'}$ and Proposition \ref{compatibilite-pull-back-push-forward} we get
$$g^{*}<u_{\Phi_{W'}},\varphi> =  \pi_{W'!}
\left((g\times \Id_{V_z})^* \left(\mathbb L^{\alpha^+(\varphi) m}\varphi
<u_{\Phi_{W'}\times V_z},t_{\alpha^-(\varphi),\alpha^+(\varphi)}>\right)\right),$$
applying the pull-back assumption we obtain
$$g^{*}<u_{\Phi_W'},\varphi> =  \pi_{W'!} \left(\mathbb L^{(\alpha^+(\varphi) \circ g) m}
 <u_{\Phi_{W}\times V_z},t_{\alpha^-(\varphi)\circ g, \alpha^+(\varphi)\circ g}>(g\times \Id_V)^{*}\varphi\right)$$
 and using equality \ref{defmoyenne} we conclude
$$g^{*}<u_{\Phi_{W'}},\varphi> =<u_{\Phi_W},(g\times \Id_{V_z})^{*}\varphi>.$$
by the equalities
$\alpha^{+}((g\times \Id_{V_z})^{*}\varphi)=g^{*}\alpha^{+}(\varphi)$ and $\alpha^{-}((g\times \Id_{V_z})^{*}\varphi)=g^{*}\alpha^{-}(\varphi)$.

\item[$\bullet$] Let $\varphi$ be a $(g\times \Id_V)$-convenient Schwartz-Bruhat function in $S_{W}(W\times V)$.
Let's prove that $<u_{\Phi_W},\varphi>$ is $g$-integrable.
By definition, there are two definable functions $\beta^+$ and $\beta^-$ from $W'$ to $\mathbb Z$ such that
$\beta^+ \circ g = \alpha^+(\varphi)$ and
$\beta^- \circ g = \alpha^{-}(\varphi)$.
By Proposition \ref{push-forwardSB} and its proof, the constructible exponential function
$(g\times \Id_{V_z})_{!}\varphi$ is a Schwartz-Bruhat function in $S_{W}(W\times V)$ with $\alpha^{+}((g\times \Id_{V_z})_{!}\varphi)$ equal to $\beta^+$ and
$\alpha^{-}((g\times \Id_{V_z})_{!}\varphi)$ equal to $\beta^-$.
By the relation $\Phi_W = \Phi_{W'}\circ g$ and the compatibility relation on pull-back we have the equality
$$<u_{\Phi_W \times V_z,t_{\alpha^-(\varphi),\alpha^+(\varphi)}}> =
(g\times \Id_{V_z})^* <u_{\Phi_{W'} \times V_z},t_{\beta^-,\beta^+}>.
$$
But the Schwartz-Bruhat function $\varphi$ is $(g\times \Id_{V_z})$-integrable, then by the projection axiom (and the remark \ref{remproj}) the constructible exponential function
$\left((g\times \Id_{V_z})^*<u_{\Phi_{W'}\times V_z},t_{\beta^-,\beta^+}>\right)\varphi \mathbb L^{\alpha^+(\varphi)m}$
is also $(g\times \Id_{V_z})$-integrable with the equality in $\mathcal C(W'\times V_z)$
\begin{equation} \label{*equality}
 (g\times \Id_{V_z})_! \left((g\times \Id_{V_z})^*<u_{\Phi_{W'}\times V_z},t_{\beta^-,\beta^+}>\varphi \mathbb L^{\alpha^+m}\right)
 = <u_{\Phi_{W'}\times V_z},t_{\beta^-,\beta^+}> (g\times \Id_{V_z})_{!}(\varphi \mathbb L^{\alpha^+ m}).
\end{equation}
This constructible function is $W'$-integrable by application of the integrability assumption, Proposition \ref{loc-int-SB} and the fact that the constructible function $(g\times \Id_{V_z})_{!}(\varphi \mathbb L^{\alpha^+ m})$ is in $S_{W'}(W'\times V)$. 
Thus, by Fubini, the constructible exponential function
$<u_{\Phi_W \times V_z},t_{\alpha^-(\varphi),\alpha^+(\varphi)}>\varphi \mathbb L^{\alpha^+(\varphi) m}$ is
 $\pi_{W'}\circ (g\times \Id_{V_z})$-integrable, namely
 $(g\circ \pi_W)$-integrable,
 but it is also $\pi_W$-integrable by the integrability condition and proposition \ref{loc-int-SB}, then, by Fubini,
 $<u_{\Phi_W},\varphi>$ is $g$-integrable
 and by  equality (\ref{*equality}) we get
 $$g_{!}<u_{\Phi_W},\varphi> = <u_{\Phi_{W'}},(g\times \Id_{V_z})_{!}\varphi>,$$
 which finishes the proof.
 \end{itemize}
\end{proof}

\begin{rem}  \label{remark-extension}
Note that, a priori, the data $\alpha^+(\varphi)$ and $\alpha^-(\varphi)$ are not canonically defined. Following the context, we can fix for any Schwartz-Bruhat function such data. We can use the extension theorem to obtain a definable distribution. Thanks to the motivic average formula we obtain the independence of the definable distribution from the data.
\end{rem}

\subsection{Fourier transform on definable distributions}
\begin{defn}[Fourier transform of a definable distribution]
Let $P$ be a definable set. Let $V$ be the definable subset $h[m,0,0]$ for a positive integer $m$.
Let $(u_{\Phi_{W}})$ be a definable distribution in $S'_{P}(V_\xi)$. We define $\mathcal F(u)$ in $S'_{P}(V_x)$ as
the definable distribution $(\mathcal F(u_{\Phi_{W}}))$ where for any definable $\Phi_{W}$ in $\Def_{P}$,
$$\forall \varphi \in S_{W}(W\times V),\:\: <\mathcal F u_{\Phi_{W}},\varphi>:=
<u_{\Phi_{W}},\mathcal F \varphi> \in \mathcal C(W)^{\exp}.$$
\end{defn}
\begin{proof}
Let $\Phi_W$ and $\Phi_{W'}$ be two definable subsets in $\Def_{P}$ and $g:W \ra W'$ be a definable morphism with
$\Phi_W = g\circ \Phi_{W'}$.
Let $\varphi$ be a Schwartz-Bruhat function in $S_{W'}(W'\times V_x)$. By the definition and compatibility condition on $u$ we have
$$g^{*}<\mathcal F u_{\Phi_{W'}},\varphi> = <u_{\Phi_W},(g\times \Id_{V_\xi})^{*}\mathcal F \varphi>.$$
  By Proposition \ref{compatibilite-pull-back-push-forward} we obtain the equality
  $$(g\times \Id_{V_{\xi}})^{*}(\mathcal F \varphi) = \mathcal F ((g\times \Id_{V_{x}})^{*} \varphi) :
  (w,\xi) \mapsto \int_{x\in V}\varphi(g(w),x)E(x\mid \xi)dx$$
  which implies the equality
  $$g^{*}<\mathcal F u_{\Phi_{W'}},\varphi> = <\mathcal F u_{\Phi_W},(g\times \Id_{V_x})^{*}\varphi>.$$
  Let $\varphi$ be a $(g\times \Id_{V_x})$-compatible Schwartz-Bruhat function $S_{W}(W \times V_x)$.
  By Fubini as in Example \ref{ex-loc-int} we obtain the equality
  $$(g\times \Id_{V_{\xi}})_{!}\left(\mathcal F \varphi \right) =
  \mathcal F\left( (g\times \Id_{V_{x}})_{!} \varphi \right) \in
  S_{W'}(W'\times V_\xi).$$
  By the definition and push-forward relation we have
  $$
  g_{!}<\mathcal F u_{\Phi_W},\varphi > =
  <u_{\Phi W'},(g\times \Id_{V_{\xi}})_{!}\left(\mathcal F \varphi \right)>
  = <\mathcal F u_{\Phi_W'},(g\times \Id_{V_{\xi}})_!\varphi >.
  $$
\end{proof}

\begin{prop}
Let $P$ be a definable set. Let $V$ be the definable subset $h[m,0,0]$ for a positive integer $m$.
The Fourier transform is an isomorphism on $S'_{P}(V)$ with inverse defined for any definable distribution $u$ in $S'_{P}(V)$ by: for any definable $\Phi_{W}$ in $\Def_{P}$,
$$\forall \varphi \in S_{W}(W\times V),\:\: <\overline{\mathcal F} u_{\Phi_{W}},\varphi>:=
<u_{\Phi_{W}},\mathbb L^{m} \mathcal F \check{\varphi}>.$$
where $\check{\varphi}$ is $(w,x)\mapsto \varphi(w,-x)$.
\end{prop}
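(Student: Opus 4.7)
The plan is to show that the formula
\[
<\overline{\mathcal F}u_{\Phi_W},\varphi> := <u_{\Phi_W},\mathbb L^{m}\mathcal F\check{\varphi}>
\]
defines a definable distribution, and that $\overline{\mathcal F}$ and $\mathcal F$ are mutually inverse on $S'_P(V)$.

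First, I would verify that $\overline{\mathcal F}u$ is indeed in $S'_P(V)$. The map $i:V\to V$ sending $x$ to $-x$ is a definable isomorphism with relative Jacobian of order $0$, and it commutes with any map of the form $g\times \Id_V$. Consequently, $(g\times\Id_V)^*\check{\varphi}=\widecheck{(g\times\Id_V)^*\varphi}$, and by the change of variables formula (Theorem \ref{cov}) the same commutation holds for the pushforward. Combining this with the compatibility of $\mathcal F$ with pullback and pushforward established in the proof of the preceding proposition, the pullback and pushforward conditions in Definition \ref{definable distribution} for $\overline{\mathcal F}u$ follow from those for $u$, exactly in the same way as for $\mathcal F u$.

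Next, I would prove the inversion identities by a direct computation on test functions, using Theorem \ref{thm:four}. For any $\varphi\in S_W(W\times V)$,
\[
<\overline{\mathcal F}\mathcal F u_{\Phi_W},\varphi>
= <\mathcal F u_{\Phi_W},\mathbb L^{m}\mathcal F\check{\varphi}>
= <u_{\Phi_W},\mathbb L^{m}\mathcal F(\mathcal F\check{\varphi})>
= <u_{\Phi_W},\mathbb L^{m}\cdot \mathbb L^{-m}\check{\check{\varphi}}>
= <u_{\Phi_W},\varphi>,
\]
since $\check{\check{\varphi}}=\varphi$. For the other direction, observe that a change of the integration variable $y\mapsto -y$ in the defining integral of $\mathcal F$ gives the key identity $\widecheck{\mathcal F\varphi}=\mathcal F\check{\varphi}$; this is a routine manipulation at the level of oscillatory integrals and makes rigorous use of the relative change of variables formula. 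Then
\[
<\mathcal F\overline{\mathcal F}u_{\Phi_W},\varphi>
= <\overline{\mathcal F}u_{\Phi_W},\mathcal F\varphi>
= <u_{\Phi_W},\mathbb L^{m}\mathcal F(\widecheck{\mathcal F\varphi})>
= <u_{\Phi_W},\mathbb L^{m}\mathcal F\mathcal F\check{\varphi}>
= <u_{\Phi_W},\varphi>.
\]
This establishes that $\mathcal F$ and $\overline{\mathcal F}$ are inverse to each other, proving the proposition.

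I do not expect any genuine obstacle: the only nontrivial ingredient is Theorem \ref{thm:four}, and everything else is formal manipulation. The one point that requires a little care is checking that the involution $\check{}$ interacts well with pullback and pushforward by morphisms of the form $g\times\Id_V$, which is immediate since the involution acts only on the $V$-coordinate and has trivial Jacobian order.
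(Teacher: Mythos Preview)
Your proposal is correct and follows the same approach as the paper, which simply states that the result follows from the Fourier inversion isomorphism at the level of Schwartz-Bruhat functions (Theorem \ref{thm:four}). Your version is more explicit than the paper's one-line justification, spelling out both directions of the inversion and the verification that $\overline{\mathcal F}u$ is again a definable distribution, but the underlying idea is identical.
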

\begin{proof}
 This follows from the isomorphism of the Fourier transform at the level of Schwartz-Bruhat functions.
\end{proof}

The following theorem is a motivic analogous of the usual Paley-Wiener theorem in real analysis. It will be used in the proof of Theorem \ref{projectionWF}.

\begin{thm}[Motivic Paley-Wiener theorem]  \label{rem-distrib}
Let $P$ be a definable set. Let $V_x$ and $V_\xi$ be
the definable subset $h[m,0,0]$ with $m$ a positive integer.
Let $u$ be a definable distribution in $S'_{P}(V_x)$.
Let $\varphi$ be a Schwartz-Bruhat function in $S_{P}(P\times V_x)$. Denote by $u_\varphi$
the constructible exponential function $<u_{P\times V_\xi},\varphi E( \cdot \mid  \cdot)>$ in $\mathcal C(P\times V_\xi)^{\exp}$ with
$$\varphi E( \cdot \mid  \cdot) :(p,x,\xi)\mapsto \varphi(p,x)E(x\mid \xi).$$
If there is a definable function $\alpha^{-}(u_\varphi)$ from $P$ to $\mathbb Z$ such that
$u_\varphi = u_\varphi 1_{B_{\alpha^{-}(u_\varphi)}}$, then the constructible exponential function $u_\varphi$ is a Schwartz-Bruhat function in $S_{P}(P\times V_\xi)$ and as a definable distribution
$\overline{\mathcal F}u_\varphi$ is equal to $(\mathbb L^{-m}\varphi) u.$
\end{thm}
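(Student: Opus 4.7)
The plan is to prove the two assertions separately. For the first, I would verify that $u_\varphi$ satisfies the three defining conditions of $S_P(P \times V_\xi)$; for the second, I would test both sides against an arbitrary Schwartz-Bruhat function and reduce the equality to the motivic Fourier inversion formula.

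First I would check that $u_\varphi$ is a well-defined constructible exponential function: by Proposition \ref{SB-exp}, the test function $(p, \xi, x) \mapsto \varphi(p, x) E(x \mid \xi)$ lies in $S_{P \times V_\xi}(P \times V_\xi \times V_x)$, so $u_\varphi = \langle u_{P \times V_\xi}, \varphi E(\cdot \mid \cdot)\rangle$ belongs to $\mathcal C(P \times V_\xi)^{\exp}$. The bounded support condition is part of the hypothesis. To establish local constancy, I would unfold the convolution $u_\varphi * 1_{B_\alpha}$ and use the pull-back of $u$ along the shift $(p,\xi,\eta) \mapsto (p, \xi-\eta)$ together with the push-forward compatibility of $u$ along the projection $P \times V_\xi \times V_\eta \to P \times V_\xi$ to move the $\eta$-integration inside the distribution. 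The inner integral evaluates, by Example \ref{Fourier-indicatrice}, to $\mathbb L^{-m\alpha} 1_{B_{-\alpha+1}}(x)$. For $\alpha > -\alpha^{-}(\varphi)$ this indicator is trivial on the support of $\varphi$, giving $u_\varphi * 1_{B_\alpha} = \mathbb L^{-m\alpha} u_\varphi$, so one may set $\alpha^{+}(u_\varphi) := -\alpha^{-}(\varphi)$. Integrability then follows from bounded support together with local constancy.

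For the second assertion, I would test both sides against an arbitrary $\chi \in S_W(W \times V_x)$ for $\Phi_W \in \Def_P$. Starting from
$$
\langle \overline{\mathcal{F}} u_\varphi, \chi\rangle_{\Phi_W} = \langle u_\varphi, \mathbb L^m \mathcal F \check{\chi}\rangle_{\Phi_W},
$$
I would use that $u_\varphi$ is Schwartz-Bruhat (from Part 1) to realize the right-hand side via Example \ref{ex-loc-int} as a motivic integral, then unfold the definition of $u_\varphi$ and exchange the $V_\xi$-integration with the distribution $u$ via Fubini and the compatibility axioms. The inner $\xi$-integral becomes $\mathcal F(\mathcal F \check{\chi})$, which by the inversion formula of Theorem \ref{thm:four} equals $\mathbb L^{-m} \chi$. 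Collecting factors and identifying the result with the product distribution of Proposition \ref{produitparSB} yields the claimed equality $\overline{\mathcal{F}} u_\varphi = (\mathbb L^{-m} \varphi) u$.

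The main technical obstacle is the rigorous justification of the Fubini-type interchanges of $u$ with the $\eta$- and $\xi$-integrations. Each swap must be implemented by applying the pull-back and push-forward compatibility conditions of Definition \ref{definable distribution}, which requires verifying that the relevant intermediate test functions are Schwartz-Bruhat and $(g \times \Id)$-convenient, using Proposition \ref{SB-exp}, Lemma \ref{pull-backSB}, and Lemma \ref{push-forwardSB}. The bounded support hypothesis on $u_\varphi$ is essential in Part 2 to guarantee the motivic integrability of $u_\varphi \cdot \mathbb L^m \mathcal F \check{\chi}$ and of its intermediate avatars during the exchange of order of integration.
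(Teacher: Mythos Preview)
Your proposal is correct and follows essentially the same strategy as the paper: verify the three Schwartz--Bruhat conditions for $u_\varphi$ (bounded support from the hypothesis, local constancy by moving the convolution variable inside $u$ via push-forward compatibility and recognizing $\mathcal F(1_{B_\alpha})$, integrability via the compatibility axioms), then establish the distributional identity by testing and using Fourier inversion. The paper differs only in minor bookkeeping: it truncates the test function by $1_{B_{\alpha^-(u_\varphi)}}(\xi)$ explicitly to force $\alpha^+$ of the intermediate test function to factor through the projection (hence be $(g\times\Id)$-convenient), and in Part~2 it first computes $\overline{\mathcal F}u_\varphi$ as the expression $\langle u_{P\times V_y},\mathbb L^{-m\beta^-}\varphi\,1_{B_{1-\beta^-}}(\cdot-y)\rangle$ and then pairs with $\psi$ using the convolution identity on $\psi$, rather than applying Fourier inversion to the test function as you do---both routes amount to the same Fubini-type exchange you flag as the main technical point.
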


\begin{proof}
We prove that  $u_\varphi$ is a Schwartz-Bruhat function in $S_{P}(P\times V_\xi)$ with data $\alpha^{-}(u_\varphi)$ given by assumption, and 
$\alpha^{+}(u_\varphi)$ be the definable function from $P$ to $\mathbb Z$ equal to $\max (1-\alpha^{-}(\varphi),\alpha^{-}(u_\varphi)).$
\begin{itemize}
	\item[$\bullet$] For any definable function $\beta^-$ from $P$ to $\mathbb Z$, with $\beta^-\leq \alpha^{-}(u_\varphi)$  we have the equalities
$$u_{\varphi} \cdot 1_{B_{\beta^-}} = \left(u_{\varphi} \cdot 1_{B_{\alpha^{-}(u_\varphi)}}\right) \cdot 1_{B_{\beta^-}} = 
u_{\varphi} \cdot 1_{B_\alpha^{-}(u_\varphi)}=u_{\varphi}.$$
\item[$\bullet$] We denote by $\phi$ the constructible exponential function
$$\phi : (p,x,\xi) \mapsto \varphi(p,x)E(x\mid \xi)1_{B_{\alpha^{-}(u_\varphi)(p)}}(\xi).$$
By Proposition \ref{SB-exp}, $\phi$ is a Schwartz-Bruhat function in $S_{P\times V_\xi}(P\times V_{\xi,x})$
with $\alpha^{+}(\phi)$ chosen as $\max(\alpha^{+}(\varphi),-\alpha^{-}(u_\varphi))$ and $\alpha^{-}(\phi)$ equal to $\alpha^{-}(\varphi)$.
With that choice, this function is $(\pi_{P}\times \Id_{V_x})$-compatible. By the push-forward condition \ref{definable distribution} on the definable distribution $u$, we deduce that the constructible exponential function $u_\varphi$ is $\pi_P$-integrable.
\item[$\bullet$]Let $\beta^+$ be a definable function from $P$ to $\mathbb Z$ with $\beta^+ \geq \alpha^{+}(u_\varphi)$.
By definition of the convolution product and the pull-back conditions on $u$ we have
$$u_{\varphi} * 1_{B_\beta}  =
\pi_{P\times V_{\eta}!}
\left<u_{P\times V_{\eta,\xi}}, \psi \right>.$$
where
$$ \Psi : (p,\eta,\xi,x)\mapsto \phi(p,x,\xi)1_{B_{\beta^+(p)}}(\eta-\xi)$$
is $(\pi_{P\times V_\eta} \times \Id_{V_x})$-compatible Schwartz-Bruhat function in $S_{P \times V_\eta}(P \times V_{\eta, \xi,x})$.
By the push-forward compatibility on $u$ we have
$$u_{\varphi} * 1_{B_\beta^+}  =
\pi_{P\times V_{\eta}!}
\left<u_{P\times V_{\eta,\xi}}, \Psi \right>
=
\left<u_{P\times V_{\eta}},(\pi_{P\times V_{\eta}} \times \Id_{V_x})_{!}\Psi \right> = \mathbb L^{-m\beta^+}u_{\varphi}.
$$
Indeed, using the inequality
$\beta^+ \geq \alpha^{-}(u_\varphi)$ we obtain the relation
$$
1_{B_{\alpha^{-}(u_\varphi)}}(\xi)1_{B_{\beta^+}}(\eta-\xi) =
1_{B_{\alpha^{-}(u_\varphi)}}(\xi)1_{B_{\alpha^{-}(u_\varphi)}}(\eta)1_{B_{\beta^+}}(\eta-\xi)
=1_{B_{\alpha^{-}(u_\varphi)}}(\eta)1_{B_{\beta^+}}(\eta-\xi)
$$
which implies by the projection axiom \ref{axiom-projection} and exponential properties \ref{exponential}
$$ (\pi_{P\times V_{\eta}} \times \Id_{V_x})_{!} \Psi = \phi \mathcal F(1_{B_{\beta^+}}) $$
and we conclude by the relations $\mathcal F(1_{B_{\beta^+}}) = \mathbb L^{-m\beta^+}1_{B_{1-\beta^+}}$ and
$\beta^+\geq 1-\alpha^-(\varphi)$.
Thus, the constructible exponential function $u_\varphi$ is a Schwartz-Bruhat function in $S_{P}(P\times V_\xi)$.
\end{itemize}
Let's prove that as a distribution $\overline{\mathcal F}u_\varphi$ is equal to $(\mathbb L^{-m}\varphi) u.$
By Proposition \ref{produitparSB} and Example \ref{ex-loc-int} we want to prove the following equality 
for any $\Phi_W$ in $\Def_P$ and $\psi$ in $S_{W}(W\times V)$
$$\pi_{W!}\left((\Phi_W\times \Id_{V_y})^{*}\left(\overline{\mathcal F} u_\varphi\right) \cdot \psi\right) =
 <u_{\Phi_W},\left(\left(\Phi_W\times \Id_{V_x}\right)^{*}\left(\mathbb L^{-m}\varphi\right)\right) \psi>.$$
 
Let $\beta^-$ be a definable function from $W$ to $\mathbb Z$ such that $\beta^- \leq \alpha^{-}(u_\varphi)$ and
 $1-(\beta^{-}\circ \Phi_W)\geq \alpha^{+}(\psi)$.

Using the definition of $\overline{\mathcal F}u_\varphi$, the pull-back and push-forward compatibility relations on $u$ and the Fourier transform of the characteristic function $1_{B_{\beta^-}}$ we have
$$\overline{\mathcal F}u_\varphi  =
\left<u_{P \times V_y}, \mathbb L^{-m\beta^-} \varphi \left(1_{B_{1-\beta^-}}\circ d\right) \right>.
$$
By the pull-back compatibility relation we have
$$(\Phi_W\times \Id_{V_y})^{*}\left(\overline{\mathcal F} u_\varphi\right) =
<u_{ \Phi_W \times V_y},
(\Phi_W\times \Id_{V_{y,x}})^{*}(\mathbb L^{-m\beta^-} \varphi (1_{B_{1-\beta^-}}\circ d))>.$$
We consider now the constructible exponential function
$$\Psi = (w,x,y) \mapsto (\Phi_W\times \Id_{V_{y,x}})^{*}(\mathbb L^{-m\beta^-} \varphi [1_{B_{1-\beta^-}}\circ d])(w,x,y)\psi(w,y) $$
which is a $(\pi_{W}\times \Id_{V_x})$-compatible Schwartz-Bruhat function in $S_{W\times V_y}(W\times V_{y,x})$ with
$$(\pi_{W}\times \Id_{V_x})_{!}\Psi = \mathbb L^{-m}\left((\Phi_W\times \Id_{V_x})^*\varphi\right)\psi,$$
using the convolution formula on $\psi$ with the assumption $1-(\beta^-\circ \Phi_W) \geq \alpha^{+}(\psi)$.\\
By Example \ref{ex-loc-int}, the previous point and the pull-back compatiblity relation 
we conclude
$$<(\Phi_W\times \Id_{V_y})^{*}\overline{\mathcal F} u_\varphi,\psi> =
\pi_{W !}(\left((\Phi_W\times \Id_{V_y})^{*}\left(\overline{\mathcal F} u_{\varphi}\right)\right)\cdot\psi) = <u_{\Phi_W},\mathbb L^{-m}\left((\Phi_W\times \Id_{V_y})^{*}\varphi\right)\psi>.$$
\end{proof}

\section{Motivic wave front sets of a definable distribution}  \label{section-microlocal}
\subsection{Notations}
Let $V$ be the definable set $h[m,0,0]$ for a positive integer $m$. Let $T^{*}(V)\setminus \{0\}$ be the \emph{cotangent space} $V\times (V\setminus \{0\})$ of $V$ without the zero section.
We denote by $(x,\xi)$ the elements of $T^{*}(V)$.
For a positive integer $\n$ we consider the definable subgroup $\Lambda_{\n}$ of $(h[1,0,0]\setminus \{0\},\times)$ (defined in definition \ref{Lambdan}) and
the closed and bounded definable set of $V_\xi$
$$\mathscr B_{\mathfrak n}:=
\bigsqcup_{r=0}^{\mathfrak n-1} B_{r}\setminus B_{r+1}=\{\xi \in V \mid 0\leq \ord \xi \leq \mathfrak n-1\},$$
with $\ord \xi = \min \{\ord \xi_i \mid 1 \leq i \leq m\}$.
\begin{rem}
 For any $\xi$ in $V\setminus \{0\}$, there is a unique $r$ in $\{0, \dots, \n-1\}$ equal to the rest of the euclidean division of $\ord \xi$ by $\n$, such that there is $\lambda$ in $\Lambda_\n$ and $\tilde{\xi}$ in $B_{r}\setminus B_{r+1}$ with $\xi = \lambda \tilde{\xi}$.
\end{rem}

\subsection{Singular support of a definable distribution}

\begin{defn}[Smooth point and singular support]  \label{singular-support}
Let $P$ be a definable set and $V$ be the definable subset $h[m,0,0]$ for a positive integer $m$.
A definable distribution $u$ in $S'_{P}(V)$ is said to be \emph{smooth} at a point $x$ in $V$ if and only if
there are a definable function $r_{x}$ from $P$ to $\mathbb Z$
and a Schwartz-Bruhat function $\psi$ in $S_P(P\times V)$
 such that the definable distributions $1_{B(x,r_x)}u$ and $1_{B(x,r_x)}\psi$ are equal. Namely
 for any
 definable set $\Phi_{W}$ in $\Def_{P}$,
 for any Schwartz-Bruhat function $\varphi$ in $S_{W}(W\times V)$ we have
 $$ <u_{\Phi_{W}},\left((\Phi_W \times \Id_{V})^*1_{B(x,r_x)}\right)\cdot\varphi>=<(\Phi_W\times \Id_V)^{*}\left(1_{B(x,r_x)}\psi\right),\varphi>.$$
 The complement of the set of smooth points is called \emph{singular support} of $u$ denoted by
  $\SS\:u$. 
\end{defn}

\begin{rem}
As the smoothness condition is open, the singular support is a closed subset of $X$. These sets are not defined by a first order condition and a priori are not definable.
\end{rem}

\subsection{Motivic microlocal smooth data of a definable distribution}
We recall first the definition by Heifetz in \cite{Hei85a} of the $p$-adic wave front set of a definable distribution.
\begin{defn}[$p$-adic wave front set of a definable distribution]
 Let $u$ be a definable distribution in $S'(\mathbb Q_{p}^m)$.
 Let $\Lambda$ be a subgroup of $\mathbb Q_p^*$ with finite index.
 Let $\Psi$ a non trivial additive character on $\mathbb Q_p$ which is trivial on $p\mathbb Z_p$.
 A point $(x_0,\xi_0)$ in $T^*(\mathbb Q_p^m)$ with $\xi_0 \neq 0$ is called
 \emph{$\Lambda$-microlocally smooth},
 if and only if there are integers $r_{x_0,\xi_0}>0$ and $\check{r}_{x_0,\xi_0}>0$ such that for any Schwartz-Bruhat function $\varphi$ in $S(B(x_0,r_{x_0,\xi_0}))$, there is an integer $N_\varphi$ such that for any $\lambda$ in $\Lambda$
 $$\ord \lambda \leq N_\varphi \:\:\Rightarrow \:\:
 <u,\varphi \Psi( \cdot \mid \lambda \xi)>1_{B(\xi,\check{r}_{x_0,\xi_0})} = 0.$$
 The complement in $T^{*}(\mathbb Q_{p}^m)\setminus \{0\}$ of the set of $\Lambda$-microlocally smooth points is called \emph{$\Lambda$-wave front set} of $u$.
\end{defn}

\begin{rem}
This definition implies two problems in the $k\llp t \rrp$-setting.
\begin{enumerate}
\item This definition is local and not global, and globalisation arguments use the compactness of the $p$-adic sphere.
\item Furthermore, the induced functions $r$ and $\check r$ in $(x_0,\xi_0)$ are a priori not definable, because the microlocal statement is not first order.
\end{enumerate}
\end{rem}
As a solution of these problems, we introduce the following notion.

\begin{defn}[Motivic microlocal smooth data]   \label{mls-data}
Let $V$ be the definable set $h[m,0,0]$  with $m$ a positive integer. Let $P$ be a definable set in $\Def_k$.
We assume $P$ and $\mathbb Z$ endowed with the discrete topology.
Let $u$ be a definable distribution in $S'_{P}(V)$. A \emph{$\Lambda_\n$-microlocally smooth data} of $u$ is a quadruple $(\mA,r,\check{r},N)$ with
\begin{enumerate}
	\item \label{mAdef} $\mA$ is a definable subset of $P\times V_x \times \Bn$.
	\item \label{condr} $r:\mA\ra \mathbb Z$ and $\check{r}:\mA \ra \mathbb Z$ are two definable and continuous maps such that for any $(p,x_0,\xi_0)$ in $\mA$, 
		the product $$\{p\}\times B(x_0,r(p,x_0,\xi_0)) \times B(\xi_0,\check{r}(p,x_0,\xi_0))$$ 
		is contained in $\mA$ and we have the inequalities $\check{r}(-,-,\xi_0)\geq \n \geq \ord \xi_0 + 1.$
	\item \label{condN} $N:\mathcal B\ra \mathbb Z$ is a definable and continuous map with 
		$$\mathcal B=\{\left((p,x_0,\xi_0),x',r'\right)\in \mA \times V_{x'} \times \mathbb Z \mid B(x',r')\subset B(x_0,r(p,x_0,\xi_0))\}$$
\end{enumerate}
such that we have the equality of constructible exponential functions in $\mathcal C(\Lambda_\n \times \D)^{\exp}$
$$\left(<u_{\Lambda_\n \times \D},T>1_{\E}\right)1_{B_N} = <u_{\Lambda_\n \times \D},T>1_{\E}$$
where
 $$\E=\left\{\left((p,x_0,\xi_0),x',r',\xi\right)\in \mA \times V_{x'} \times \mathbb Z \times V_{\xi} \: \:
 \begin{array}{|l}
 B(x',r')\subset B(x_0,r(p,x_0,\xi_0))\\
 \xi \in B(\xi_0,\check{r}(p,x_0,\xi_0))
 \end{array}
 \right\},$$
$$\D = P \times V_{x_0} \times V_{\xi_0} \times V_{x'} \times \mathbb Z \times V_{\xi}$$
$T$ is the Schwartz-Bruhat function in
 $S_{\Lambda_\n \times \D}(\Lambda_\n \times \D \times V)$ defined by
 $$T(\lambda,\left(p,x_0,\xi_0,x',r',\xi \right),x)=
 1_{B(x',r')}(x)E(x\mid \lambda \xi),$$
 and
 $$B_{N}=\{(\lambda,\left(p,x_0,\xi_0,x',r' \right),\xi)\in \Lambda_\n \times \mathcal B \times V_\xi \mid
 \ord \lambda \geq N(p,x_0,\xi_0,x',r')\}.$$
\end{defn}

\begin{rem}
 In the previous definition and below we will denote in the same way, $1_\E$ as a function on
 $\mathcal C(\Lambda_\n \times \mathcal D)^{\exp}$ or in $\mathcal C(\mathcal D)^{\exp}$. As well, we will denote in the same way $1_{B_N}$ as
 a function on $\mathcal C(\Lambda_\n \times \mathcal D)^{\exp}$ or in $\mathcal C(\Lambda_\n \times \mathcal B)^{\exp}$.
\end{rem}
\begin{prop}[Restriction of a $\Lambda_\n$-definable data]  \label{restriction-data}
 Let $P$ be a definable set. Let $V$ be the definable subset $h[m,0,0]$ with $m>0$. Let $u$ be a definable distribution in $S'_{P}(V)$.
 Let $( \mathcal A_1,r_{1},\check{r}_{1},N_{1})$ be a $\Lambda_\n$-definable data of $u$.
 Let $(\mathcal A_2,r_{2},\check{r}_{2},N_{2})$ be a quadruple such that
 $\mathcal A_2$ is a definable subset of $\mathcal A_1$,
  $r_2$ and $\check{r}_2$ are two definable continuous maps from $\mathcal A_2$ to $\mathbb Z$ such that $r_2 \geq r_1$ and
  $\check{r}_2 \geq \check{r}_1$. We assume also that for any $(p,x_0,\xi_0)$ in $\mathcal A_2$, the product
  $$\{p\}\times B(x_0,r_{2}(p,x_0,\xi_0)) \times (B(\xi_0,\check{r}_{2}(p,x_0,\xi_0)) \cap \Bn)$$
  is contained in $\mathcal A_2$ and
  $N_{2}$ is a definable continuous map from
  $\mathcal B_{2} = \mathcal B\cap (\mathcal A_2 \times V_{x'} \times \mathbb Z)$ to $\mathbb Z$
  such that $N_2 \leq N_{1\mid \mathcal B_{2}}$.
 With these assumptions, $(\mathcal A_2,r_2,\check{r}_2,N_2)$ is a $\Lambda_\n$-definable data.
 \end{prop}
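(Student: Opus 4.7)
The plan is to verify that $(\mathcal A_2, r_2, \check r_2, N_2)$ satisfies the four conditions of Definition \ref{mls-data}. Items \ref{mAdef}, \ref{condr}, \ref{condN} are either explicit hypotheses (definability and continuity of $r_2, \check r_2, N_2$, stability of $\mathcal A_2$ under the prescribed product of balls) or immediate consequences of them: the lower bound $\check r_2(-,-,\xi_0) \geq \n \geq \ord \xi_0 + 1$ is inherited from the corresponding inequality for $\check r_1$, using $\check r_2 \geq \check r_1$. So the substantive point is to establish the defining equality
\begin{equation*}
\bigl(\langle u_{\Lambda_\n \times \mathcal D}, T \rangle \, 1_{\mathcal E_2}\bigr) \, 1_{B_{N_2}} \;=\; \langle u_{\Lambda_\n \times \mathcal D}, T \rangle \, 1_{\mathcal E_2},
\end{equation*}
where $\mathcal E_2$ and $B_{N_2}$ are the definable sets built from $(\mathcal A_2, r_2, \check r_2, N_2)$.

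The key observation is the ultrametric reversal: a larger valuative radius gives a smaller ball. Hence $r_2 \geq r_1$ and $\check r_2 \geq \check r_1$, together with $\mathcal A_2 \subset \mathcal A_1$, yield the inclusion $\mathcal E_2 \subset \mathcal E_1$ as subsets of $\mathcal D$, and consequently the identity of characteristic functions $1_{\mathcal E_2} \cdot 1_{\mathcal E_1} = 1_{\mathcal E_2}$ in $\mathcal C(\Lambda_\n \times \mathcal D)^{\exp}$. Dually, because $N_2 \leq N_{1 \mid \mathcal B_2}$, the condition $\ord \lambda \geq N_2$ is weaker than $\ord \lambda \geq N_1$ on $\mathcal B_2$, so after restriction to $\mathcal B_2$ we have $B_{N_1} \subset B_{N_2}$ and therefore $1_{B_{N_1}} \cdot 1_{B_{N_2}} = 1_{B_{N_1}}$.

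Granted these two set-theoretic identities the conclusion is a one-line manipulation. Multiplying the defining equality for $(\mathcal A_1, r_1, \check r_1, N_1)$ by $1_{\mathcal E_2}$ gives
\begin{equation*}
\langle u, T \rangle \, 1_{\mathcal E_2} \, 1_{B_{N_1}} \;=\; \langle u, T \rangle \, 1_{\mathcal E_2}.
\end{equation*}
Multiplying this by $1_{B_{N_2}}$ and using $1_{B_{N_1}} \cdot 1_{B_{N_2}} = 1_{B_{N_1}}$ yields $\langle u, T \rangle \, 1_{\mathcal E_2} \, 1_{B_{N_1}} = \langle u, T \rangle \, 1_{\mathcal E_2} \, 1_{B_{N_2}}$, and combining the two identities gives the required equality.

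I do not foresee a genuine obstacle: the statement is a bookkeeping lemma asserting that the notion of microlocally smooth data is invariant under shrinking the smooth locus ($\mathcal A$ smaller, $r$ and $\check r$ larger) and relaxing the lower bound on $\ord \lambda$ ($N$ smaller). The only subtlety to flag is the direction of the inequalities, dictated by the non-archimedean convention $B(x,r) = \{y : \ord(y-x) \geq r\}$, which is what makes the inclusion $\mathcal E_2 \subset \mathcal E_1$ (rather than the reverse) correspond to the hypothesis $r_2 \geq r_1$, $\check r_2 \geq \check r_1$.
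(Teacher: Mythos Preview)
Your proposal is correct and follows essentially the same route as the paper's proof: both arguments reduce to the two set-theoretic identities $1_{\mathcal E_2}\,1_{\mathcal E_1}=1_{\mathcal E_2}$ (from $\mathcal A_2\subset\mathcal A_1$, $r_2\geq r_1$, $\check r_2\geq\check r_1$) and $1_{B_{N_1}}\,1_{B_{N_2}}=1_{B_{N_1}}$ on $\mathcal B_2$ (from $N_2\leq N_{1\mid\mathcal B_2}$), then multiply the data-$1$ equality by $1_{\mathcal E_2}$ and $1_{B_{N_2}}$. Your write-up is in fact slightly more explicit than the paper's about the intermediate step and about why the ultrametric convention makes the inclusions go the right way.
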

\begin{proof}
 By assumptions the constructible exponential function $1_{B_{N_1\mid \mathcal B_2}}$ is equal to $1_{B_{N_2}}1_{B_{N_1\mid \mathcal B_2}}$ and
 $$\E_2:=\left\{(p,x_0,\xi_0,x',r')\in \mathcal B_2 \times V_\xi \:
 \begin{array}{|l}
  B(x',r') \subset B(x_0,r_2(p,x_0,\xi_0)) \subset B(x_0,r_1(p,x_0,\xi_0)) \\
  \xi \in B(\xi_0,\check{r}_2(p,x_0,\xi_0)) \subset B(\xi_0,\check{r}_1(p,x_0,\xi_0))
 \end{array}
\right\}
 $$
 is contained in $\E_1$ giving the equality $1_{\mathcal E_2}1_{\mathcal E_1}=1_{\mathcal E_2}$.
Using these equalities and the fact that $(\mathcal A,r_1,\check r_1,N_1)$ is a $\Lambda_\n$-definable data, we obtain the equality
$$<u_{\Lambda_\n \times \mathcal D},T>1_{\mathcal E_2}1_{B_{N_2}} = <u_{\Lambda_\n \times \mathcal D},T>1_{\mathcal E_2}.$$
\end{proof}

\subsection{Motivic wave front set}

 \begin{defn}[$\Lambda_\n$-motivic microlocally smooth points]  \label{microlocally smooth point}
  \label{motivic wave front set}
Let $u$ be a definable distribution in $S'_{P}(V)$ with $V$ equal to $h[m,0,0]$ with $m$ a positive integer.

$\bullet$ A point $(x_0,\xi_0)$ in $V\times \Bn$ is a \emph{$\Lambda_\n$-microlocally smooth point} of $u$ if and only if there are definable functions  $r_{x_0,\xi_0}$ and $\check{r}_{x_0,\xi_0}$ from $P$ to $\mathbb Z_{\geq \n}$, there is a continuous and definable function
$N_{x_0,\xi_0}:B_{x_0,\xi_0} \to \mathbb Z$ from the definable set
$$B_{x_0,\xi_0} = \{(p,x',r')\in P \times V_{x'} \times \mathbb Z \mid B(x',r') \subset B(x_0,r_{x_0,\xi_0}(p)) \}$$
such that
$$\left(<u_{\Lambda_\n \times \D_{x_0,\xi_0}},T_{x_0,\xi_0}>1_{\E_{x_0,\xi_0}}\right)\cdot 1_{B_{N_{x_0,\xi_0}}} =
<u_{\Lambda_\n \times \D_{x_0,\xi_0}},T_{x_0,\xi_0}>1_{\E_{x_0,\xi_0}}$$
where $\D_{x_0,\xi_0}$ is the product
$P \times V_{x'} \times \mathbb Z \times V_{\xi},$
$$\E_{x_0,\xi_0}=\left\{\left[(p,x',r'),\xi \right] \in \mathcal B_{x_0,\xi_0} \times V_{\xi} \: \mid \:
 \xi \in B(\xi_0,\check{r}_{x_0,\xi_0}(p)) \right\},$$
$T_{x_0,\xi_0}$ is the Schwartz-Bruhat function in
 $S_{\Lambda_\n \times \D_{x_0,\xi_0}}(\Lambda_\n \times \D_{x_0,\xi_0} \times V)$ defined by
 $$T_{x_0,\xi_0}(\lambda,\left[p,x',r',\xi \right],x)=
 1_{B(x',r')}(x)E(x\mid \lambda \xi),$$
 and
 $$B_{N_{x_0,\xi_0}}=\{(\lambda,\left[p,x',r' \right],\xi)\in \Lambda_\n \times \mathcal B_{x_0,\xi_0} \times V_\xi \mid
 \ord \lambda \geq N_{x_0,\xi_0}(p,x',r')\}.$$

 $\bullet$ A point $(x_0,\xi_0)$ in $T^*(V)$ is $\Lambda_n$-microlocally smooth if and only if there is a $\lambda$ in $\Lambda_{\n}$ such that $\lambda \xi_0$ belongs to $\Bn$ and $(x_0,\lambda \xi_0)$ is $\Lambda_n$-microlocally smooth.
 We denote by $\mathcal S_{\Lambda_\n}(u)$ the set of $\Lambda_\n$-microlocally smooth points of $u$.\\
\end{defn}

\begin{rem} \label{egalitepointmicrolocallisse}
By restriction, a point $(x_0,\xi_0)$ in $ V\times \Bn$ which belongs to the underset $A$ of a $\Lambda_\n$-microlocally smooth data
$(\mA,r,\check{r},N)$ is a $\Lambda_\n$-microlocally smooth point. Inversely, let $(x_0,\xi_0)$ be a $\Lambda_\n$-microlocally smooth point with data $r_{x_0,\xi_0}$,
$\check{r}_{x_0,\xi_0}$ and $N_{x_0,\xi_0}$. We consider the definable set
$$\mathcal A=\{(p,x,\xi)\in P\times V\times \Bn \mid \ord(x-x_0)\geq r(p,w_0,\xi_0), \ord(\xi-\xi_0)\geq \check{r}(p,x_0,\xi_0)\}$$
and the continuous definable functions
$$ r \colon
 \begin{cases}
  \mathcal A \longrightarrow \mathbb Z \\
  (p,x,\xi)  \longmapsto & r(p,x_0,\xi_0)
 \end{cases}
 \:\:,\:\:
  \check r \colon
\begin{cases}
 \mathcal A \longrightarrow \mathbb Z \\
 (p,x,\xi)  \longmapsto  \ord \check{r}(p,x_0,\xi_0)
\end{cases}
,\:
 N \colon
 \begin{cases}
 \mathcal B \longrightarrow \mathbb Z \\
 (p,x_0,\xi_0,x',r')  \longmapsto  N_{x_0,\xi_0}(p,x',r').
 \end{cases}
 $$
The data $(\mA,r,\check{r},N)$ is a $\Lambda_{\n}$-definable data of $u$.
\end{rem}

\begin{defn}[$\Lambda_\n$-motivic wave front set]
Let $V$ be the definable set $h[m,0,0]$ with $m$ a positive integer.
Let $u$ be a definable distribution in $S'_{P}(V)$.
The \emph{$\Lambda_\n$-motivic wave front set} of $u$ denoted by $\WF_{\Lambda_\n}(u)$ is the complement of the set of the $\Lambda_\n$-microlocally smooth points of $u$ in $T^{*}(V)\setminus \{0\}$. 
\end{defn}

\begin{rem}
 By definition the set of $\Lambda_\n$-microlocally smooth points of $u$ and
 the $\Lambda_n$ motivic wave front set of $u$ are conical, in the following, we will just consider covector $\xi$ in $\Bn$. It follows also from the remark \ref{egalitepointmicrolocallisse} that 
 $$WF_{\Lambda_\n}(u) \cap (V\times \Bn) = \bigcap_{\mA \in \mathscr A}\pi_{V\times \Bn} (\mA^{c}),$$
where $\mathscr A$ is the set of support of $\Lambda$-definable microlocally smooth $\mA$, and for such support of $\mA$, $\mA^{c}$ denotes its complement in 
$P\times V\times \Bn$.
\end{rem}

\begin{rem}[Recovering Heifetz definition]  \label{recovering Heifetz}
Assume here the parameter set $P$ is a point. Let $u$ be a definable distribution in $S'(V)$ with $V$ the definable set $h[m,0,0]$ with $m>0$.
Let's prove that any $\Lambda_\n$-motivic microlocally smooth point of $u$ is also $\Lambda_\n$-microlocally smooth in the sense of Heifetz. Let $(x_0,\xi_0)$ be a $\Lambda_\n$-motivic microlocally smooth point of $u$ and consider the data $r_{x_0,\xi_0}$, $\check{r}_{x_0,\xi_0}$ and $N(x_0,\xi_0)$ given in definition \ref{microlocally smooth point} 
By application of the definable compactness Lemma \ref{definablecompactness} on the continuous and definable function $N(x_0,\xi_0)$, for any Schwartz-Bruhat function $\varphi$ in $S(B(x_0,r_{x_0,\xi_0}))$ we define
$$N_\varphi = \min_{z\in B(x_0,r_{x_0,\xi_0})}N_{x_0,\xi_0}(z,\alpha^{+}(\varphi)).$$
We denote by $B_{x_0}$ and $B_{\xi_0}$ the balls $B(x_0,r_{x_0,\xi_0})$ and $B(\xi_0,\check{r}_{x_0,\xi_0})$. Using the convenient notation $[x_0,\xi_0]$ for the product $\{(x_0,\xi_0)\}\times\Lambda_\n\times V_{\xi}$, we prove the equality
\begin{equation}  \label{Heifetz-formula}
<u_{[x_0,\xi_0]},\varphi E(\cdot \mid \cdot )>
1_{B_{\xi_0}} 1_{B_{N_\varphi}} =
<u_{[x_0,\xi_0]},\varphi E( \cdot \mid \cdot)>
1_{B_{\xi_0}}
\end{equation}
with $$\varphi E( \cdot \mid \cdot ) : (\lambda,\xi,x) \mapsto \varphi(x) E(x \mid \lambda \xi).$$
Let $\psi$ be the constructible exponential function
$$\psi : (\lambda,\xi,z,x)\mapsto 1_{B_{\alpha^{+}(\varphi)}}(x-z) E(x \mid \lambda \xi)$$
which is a $(\pi_{\Lambda_\n\times V_\xi}\times \Id_{V_x})$-compatible
Schwartz-Bruhat function in $S_{\Lambda_\n \times V_{\xi,z}}(\Lambda_\n \times V_{\xi,z,x})$.
By the convolution identity for $\varphi$ and the push-forward compatibility conditions of $u$ we obtain the equality
$$
<u_{[x_0,\xi_0]},\varphi E( \cdot \mid \cdot )> = \pi_{[x_0,\xi_0] !} \left( \mathbb L^{-\alpha^{+}(\varphi) m} \varphi <u_{[x_0,\xi_0]\times V_z},\psi>\right).
$$
With notations of Definition \ref{mls-data} and the pull-back relations on $u$ we have
$$i^{*}_{[x_0,\xi_0]\times V_z\times \{\alpha^{+}(\varphi)\}}
\left(<u_{\Lambda_\n \times \mathcal D},T>1_{\E} \right) =
<u_{[x_0,\xi_0]\times V_z},\psi>1_{B_{\xi_0}} 1_{B_{x_0}}.
$$
Thus, we obtain  equality (\ref{Heifetz-formula}) using the equality
$$<u_{\Lambda_\n \times \mathcal D},T>1_{\E}1_{B_N} = <u_{\Lambda_\n \times \mathcal D},T>1_{\E}$$
and  from the definition of $N_\varphi$ the equality
$$ i^{*}_{[x_0,\xi_0]\times V_z\times \{\alpha^{+}(\varphi)\}} 1_{B_N} = \left(i^{*}_{[x_0,\xi_0]\times V_z\times \{\alpha^{+}(\varphi)\}} 1_{B_N}\right)\cdot 1_{B_{N_\varphi}}.$$
\end{rem}

\begin{example}[The Schwartz-Bruhat function case]  \label{cas-SB}
 Let $V$ the definable set $h[m,0,0]$ with $m>0$. Let $P$ be a definable set.
 We consider $u$ a Schwartz-Bruhat function in $S_{P}(P\times V)$. By example
 \ref{ex-loc-int}, this constructible exponential function defines a definable distribution with an empty wave front set.
 Indeed, we prove that the quadruple $(\mathcal A,r,\check r, N)$ is a $\Lambda_\n$-definable data where $\mathcal A$ is the product
 $P \times V\times \Bn$, and $r$, $\check{r}$ and $N$ are definable functions defined by
 $$ r \colon
 \begin{cases}
  \mathcal A \longrightarrow \mathbb Z \\
  (p,x,\xi)  \longmapsto &\alpha^{-}(u)(p)
 \end{cases}
 \:\:,\:\:
  \check r \colon
\begin{cases}
 \mathcal A \longrightarrow \mathbb Z \\
 (p,x,\xi)  \longmapsto  \ord \xi +1
\end{cases}$$
and
 $$N \colon
 \begin{cases}
 \mathcal B \longrightarrow \mathbb Z \\
 (p,x_0,\xi_0,x',r')  \longmapsto  1-\max\left(\alpha^{+}(u)(p),r'\right)- \ord \xi_0.
 \end{cases}
 $$
 Indeed, by definition of $T$ we obtain (using variables to be explicit)
  $$  <u_{\Lambda_\n \times \D},T>1_{\E} =
  \mathcal F\left(u(p,-)1_{B(x',r')}\right)(\lambda \xi) 1_{\E}
  $$
 By the definition of $N$ we obtain the relation
 $$ <u_{\Lambda_\n \times \D},T>1_{\E} = <u_{\Lambda_\n \times \D},T>1_{\E} 1_{B_N}$$
  using the fact that $\mathcal F\left(u(p,-)1_{B(x',r')}\right)$ is a Schwartz-Bruhat function with support in the ball
  $B_{1-\max\left(\alpha^{+}(u)(p),r'\right)}$ because
  $\max\left(\alpha^{+}(u)(p),r'\right) \geq \alpha^{+}\left(u(p,-)1_{B(x',r')}\right)$.
  \end{example}

  \begin{example}[Wave front set of a Dirac measure]
	  By Example \ref{Dirac}, the motivic wave front set of a Dirac measure at a point $x_0$ of $h[m,0,0]$ is $\{x_0\}\times (V\setminus \{0\})$.
\end{example}

\begin{example}[Wave front set of a distribution defined by a smooth variety]  \label{wf-variete}
 Assume the base field $k=\mathbb Q$, the parameter space $P$ is a point, and $\n\geq 1$. Let $d\geq 2$ be a integer. Let $V_x$ be the definable set $h[d-1,0,0]$ and $V_y$ be the definable set $h[1,0,0]$.
 Let $g$ be a polynomial map in $V_x$ and $X$ be the graph of $g$ in $V_x \times V_y$.
 We consider the definable distribution $u$ on $S'(V)$ defined for any $\varphi$ in $S(V)$ by
 $$<u,\varphi> = \int_X \varphi_{\mid X}d\mu_X = \int_{V_x} \varphi(x,g(x))dx.$$
 As $g$ has $\mathbb Q$-coefficients, we can consider the $p$-adic versions of that distribution. By the same proof as in the real case \cite[Example 8.2.5]{Hormander83},
 the $\Lambda_\n$-$p$-adic wave front set is the conormal space to the variety $X_{p}$. We show that the result is the same in this motivic setting.
 The function $(\Id_{V_x}\times g)^*\varphi$ is a Schwartz-Bruhat function in $S(V_x)$ with the data
 $$\alpha^{-}((\Id_{V_x}\times g)^*\varphi) = \alpha^{-}(\varphi)\:\:\mathrm{and}\:\:
 \alpha^{+}((\Id_{V_x}\times g)^*\varphi) = \alpha^{+}(\varphi)-\min \ord_{x\in B_{\alpha^{-}(\varphi)}} dg(x),$$
 where the existence of $\min \ord_{x\in B_{\alpha^{-}(\varphi)}} dg(x)$ follows from the definable compactness lemma \ref{definablecompactness}.
 The $\Lambda_\n$-motivic wave front set of $u$ is equal to the conormal bundle of $X$. Indeed, the definable distribution vanishes on any Schwartz-Bruhat function with support in 
 $(V_x\times V_y)\setminus X$, then its singular support is contained in $X$. Let $(x_0,g(x_0))$ be a point on $X$.
 Let $r$ be an integer. We consider a non zero covector $(\xi,\eta)$ and the integral
 $$<u,1_{B_{(x_0,g(x_0)),r}}E(\cdot \mid \lambda(\xi,\eta))>= \int_{B_r}E(\lambda(((x+x_0)\mid\xi) + g(x+x_0)\eta))dx.$$
Using Taylor expansion we have
$$E(\lambda(x\mid\xi + g(x)\eta)) =
E(\lambda(x_0\mid\xi + g(x_0)\eta))E(\lambda((\xi+\: \eta ^{t}dg(x_0))\mid x))
E(\lambda \eta R_g(x_0,x)x\mid x).$$
By definable compactness the function $x\mapsto \ord R_g(x_0,x)$ admits a minimum $N_{R}$ on the ball $B_r$.
\begin{itemize}
	\item  If $(\xi,\eta)$ is not colinear to $(-\:^{t}dg(x_0), 1)$ then by Proposition \ref{oscillante}, 
$\lambda \mapsto <u,1_{B_{(x_0,g(x_0)),r}}E(\cdot \mid \lambda(\xi,\eta))>$ has a bounded support.
This implies that the point $((x_0,g(x_0)),(\xi,\eta))$ is microlocally smooth.
\item  If $(\xi,\eta)$ is colinear to $(-\:^{t}dg(x_0), 1)$ then by specialization on $p$-adic integrals, the integral $ \int_{x\in B_r}E(\lambda \eta R_g(x_0,x)x\mid x)dx$
does not have a bounded support in $\lambda$. Hence, the point
$((x_0,g(x_0)),(\xi,\eta))$ is not microlocally smooth.
\end{itemize}

For a definable set $X$ which is locally a graph of a definable function, we can define a definable distribution by
$$<u,\varphi> = \int_X \varphi_{\mid X}d\mu_X,$$
Its singular support will be contained in $X$, and its $\Lambda_\n$-motivic wave front set will be contained in the conormal bundle of $X$.
 \end{example}

\subsection{Projection}
We recall first the statement in the $p$-adic setting of Heifetz \cite{Hei85a}.
\begin{prop}
 Let $U$ be an open set of $\mathbb Q_{p}^m$ with $\pi$ the canonical projection of $T^{*}(U)$ on $U$. Let $u$ be a distribution in $S'(U)$.
 Let $\Lambda$ be a subgroup of $\mathbb Q_p^*$ with finite index.
 The projection $\pi(\WF_{\Lambda}(u))$ is equal to the singular support of $u$.
\end{prop}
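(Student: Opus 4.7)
My plan is to prove the two inclusions separately. For the easy direction $\pi(\WF_{\Lambda}(u)) \subset \SS u$, suppose $x_0 \notin \SS u$, so there exist a radius $r$ and a Schwartz-Bruhat function $\psi$ with $u = \psi$ on $B(x_0, r)$. For any $\xi_0 \neq 0$ and any $\varphi \in S(B(x_0, r))$,
\[
\langle u, \varphi\, \Psi(\cdot \mid \lambda \xi)\rangle \;=\; \mathcal F(\varphi \psi)(\lambda \xi).
\]
Since $\varphi\psi$ is Schwartz-Bruhat, its Fourier transform has bounded support, which furnishes the integer $N_\varphi$ required by Heifetz's definition uniformly over $\xi \in B(\xi_0, \check r)$, provided $\check r$ is chosen large enough to keep $\ord \xi$ constant and equal to $\ord \xi_0$. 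Hence every $(x_0, \xi_0)$ with $\xi_0 \neq 0$ is microlocally smooth, so $x_0 \notin \pi(\WF_{\Lambda}(u))$.

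For the harder direction $\SS u \subset \pi(\WF_{\Lambda}(u))$, I argue contrapositively: assume every $(x_0, \xi)$ with $\xi \neq 0$ is microlocally smooth, and produce a ball around $x_0$ on which $u$ agrees with a Schwartz-Bruhat function. For each nonzero $\xi$ one has radii $r_\xi$ and $\check r_\xi$ from the definition; by $\Lambda$-conicity of the wave front set it suffices to cover a $\Lambda$-fundamental domain in $\Qp^m \setminus \{0\}$, which is $p$-adically compact. Extract a finite subcover $\{B(\xi_j, \check r_j)\}_{j=1}^k$ and set $r := \min_j r_{\xi_j}$. For any $\varphi \in S(B(x_0, r))$, setting $N_\varphi := \min_j N_{j,\varphi}$ yields $\mathcal F(\varphi u)(\lambda \xi) = 0$ for all $\xi$ in the fundamental domain and every $\lambda \in \Lambda$ with $\ord \lambda \leq N_\varphi$. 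By $\Lambda$-conicity this forces $\mathcal F(\varphi u)$ to have bounded support on $\Qp^m \setminus \{0\}$, and the $p$-adic Paley-Wiener theorem then makes $\varphi u$ itself Schwartz-Bruhat; applying this with $\varphi = 1_{B(x_0, r)}$ exhibits $u$ as Schwartz-Bruhat on $B(x_0, r)$, so $x_0 \notin \SS u$.

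The delicate step, and the principal obstacle when one seeks to transfer this argument to the motivic setting, is the twofold use of compactness of the $\Lambda$-fundamental domain: to extract a finite subcover and to pass from pointwise $N_{j,\varphi}$ to a uniform integer $N_\varphi$. In the $k\llp t \rrp$-setting fundamental domains fail to be compact, so the argument must be replaced by the definable-compactness Proposition \ref{definablecompactness} of Section \ref{dc}, which ensures that a continuous definable $\mathbb Z$-valued function on a closed bounded definable set takes only finitely many values, and thereby supplies the correct substitute for the $\min$ and $\max$ operations used above.
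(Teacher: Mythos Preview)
Your proposal is correct and matches the approach the paper endorses. Note however that the paper does not give its own proof of this proposition: it is stated as a recall of Heifetz's result from \cite{Hei85a}, followed only by the Remark that the easy inclusion $\pi(\WF_{\Lambda}u)\subset \SS u$ carries over to the motivic setting (Proposition \ref{p(WF)-included-in-suppsing}), while ``the main point to prove the inverse inclusion is the compactness of the $p$-adic sphere''. Your argument for both directions---Fourier transform of a Schwartz-Bruhat function having bounded support for the easy inclusion, and finite subcover of a $\Lambda$-fundamental domain plus Paley-Wiener for the hard one---is exactly the standard Heifetz proof that the paper is alluding to, and your closing paragraph correctly identifies the obstruction to transferring the argument verbatim to $k\llp t\rrp$ and the r\^ole of Proposition \ref{definablecompactness} as the substitute.
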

\begin{rem}
The inclusion of the projection $\pi(\WF_{\Lambda}u)$ in the singular support of $u$ is easy. It is still true in the motivic setting by proposition
\ref{p(WF)-included-in-suppsing}.
The main point to prove the inverse inclusion is the compactness of the $p$-adic sphere. In the motivic setting we have such inequality up to a $\Lambda_\n$-definable data, see Proposition \ref{projectionWF}. Indeed, the finiteness will come from the application of the compactness Lemma \ref{definablecompactness} on definable and continuous functions defined on a definable data.
\end{rem}
\begin{prop}  \label{p(WF)-included-in-suppsing}
	Let $P$ be a definable set, $\n \geq 1$, $V=h[m,0,0]$ with $m\geq 1$ and $\pi_{V}$ be the projection from $T^{*}(V)\setminus \{0\}$ to $V$.
Let $u$ be a definable distribution in $S'_{P}(V)$.
The projection $\pi_{V}(\WF_{\Lambda_\n}(u))$ is contained in the singular support of $u$.
\end{prop}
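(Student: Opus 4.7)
The plan is to prove the contrapositive: if $x_0 \notin \SS u$, then every point $(x_0, \xi_0)$ with $\xi_0 \in V \setminus \{0\}$ is $\Lambda_\n$-microlocally smooth, hence $x_0 \notin \pi_V(\WF_{\Lambda_\n}(u))$. By conicity of the wave front set I may restrict to $\xi_0 \in \Bn$. By Definition \ref{singular-support} applied at $x_0$, I extract a definable function $r_{x_0} : P \to \Z$ and a Schwartz-Bruhat function $\psi \in S_P(P \times V)$ such that $1_{B(x_0, r_{x_0})} u = 1_{B(x_0, r_{x_0})} \psi$ as definable distributions; explicitly, this means that for every $\Phi_W$ in $\Def_P$ and every $\varphi\in S_W(W\times V)$, the pairings $<u_{\Phi_W}, 1_{B(x_0, r_{x_0}\circ \Phi_W)}\varphi>$ and $<\psi_{\Phi_W}, 1_{B(x_0, r_{x_0}\circ \Phi_W)}\varphi>$ coincide.

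I would then propose the candidate microlocal data $r_{x_0, \xi_0}(p) := \max(r_{x_0}(p), \n)$, $\check r_{x_0, \xi_0}(p) := \n$, and
\[
N_{x_0, \xi_0}(p, x', r') := 1 - \max(\alpha^+(\psi)(p), r') - \ord \xi_0.
\]
Both radii take values $\geq \n$ with $\check r_{x_0, \xi_0} \geq \ord \xi_0 + 1$ since $\ord \xi_0 \leq \n - 1$, and $N_{x_0, \xi_0}$ is definable and continuous on $B_{x_0, \xi_0}$ (its arguments live in discrete targets). The key identity to establish is
\[
<u_{\Lambda_\n \times \D_{x_0, \xi_0}}, T_{x_0, \xi_0}> \cdot 1_{\E_{x_0, \xi_0}} = <\psi_{\Lambda_\n \times \D_{x_0, \xi_0}}, T_{x_0, \xi_0}> \cdot 1_{\E_{x_0, \xi_0}}.
\]
On $\E_{x_0, \xi_0}$ one has $B(x', r') \subset B(x_0, r_{x_0, \xi_0}(p)) \subset B(x_0, r_{x_0}(p))$, so the Schwartz-Bruhat function $1_{\E_{x_0, \xi_0}} \cdot T_{x_0, \xi_0}$ equals its product in the $x$-variable with the indicator $1_{B(x_0, r_{x_0}\circ \pi_P)}$. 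Combining the $\mathcal C^{\exp}$-linearity of $u_{\Lambda_\n \times \D_{x_0, \xi_0}}$ in the parameter-only factor $1_{\E_{x_0, \xi_0}}$ with the smoothness identity applied to the test function $1_{\E_{x_0, \xi_0}} \cdot T_{x_0, \xi_0}$ yields the desired swap of $u$ and $\psi$.

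Once this replacement is done, I would invoke Example \ref{cas-SB}: the pairing equals $\mathcal F(\psi(p, \cdot)\, 1_{B(x', r')})(\lambda \xi)$, whose support in the variable $\lambda \xi$ is contained in $B_{1 - \max(\alpha^+(\psi)(p), r')}$. On $\E_{x_0, \xi_0}$, $\ord(\xi - \xi_0) \geq \n > \ord \xi_0$ forces $\ord \xi = \ord \xi_0$, so the support condition translates to $\ord \lambda \geq N_{x_0, \xi_0}(p, x', r')$, which is precisely the condition defining $B_{N_{x_0, \xi_0}}$. Hence the identity required by Definition \ref{microlocally smooth point} holds, and $(x_0, \xi_0)$ is $\Lambda_\n$-microlocally smooth. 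The hard part will be the replacement step: one must carefully track how the constructible indicator $1_{\E_{x_0, \xi_0}}$ interacts with the distribution pairing and verify that all intermediate products remain in the appropriate Schwartz-Bruhat class with parameters.
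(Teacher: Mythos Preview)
Your proposal is correct and follows essentially the same approach as the paper: both replace $u$ by the Schwartz--Bruhat representative $\psi$ on the ball $B(x_0,r_{x_0})$ via the smoothness identity, and then invoke the bounded-support property of the Fourier transform of a Schwartz--Bruhat function (Example~\ref{cas-SB}) to obtain the required bound $N$. The only cosmetic difference is that the paper packages the conclusion as a $\Lambda_\n$-microlocally smooth data $(\mA,r,\check r,N)$ in the sense of Definition~\ref{mls-data}, with $\mA$ the whole tube over $B(a,r_a)$, whereas you verify Definition~\ref{microlocally smooth point} pointwise; by Remark~\ref{egalitepointmicrolocallisse} these formulations are equivalent, and your explicit choice $N_{x_0,\xi_0}(p,x',r')=1-\max(\alpha^+(\psi)(p),r')-\ord\xi_0$ is exactly the concrete form of the paper's $-\ord\xi_0-\alpha^-(\mathcal F(\phi))$.
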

\begin{proof}
Let $a$ be a smooth point of $u$. By Definition \ref{singular-support} there is a definable function $r_{a}$ from $P$ to $\mathbb Z$ and a Schwartz-Bruhat function $\psi$ in $S_P(P\times V)$ such that for any definable set $\Phi_{W}$ in $\Def_{P}$,
 for any Schwartz-Bruhat function $\varphi$ in $S_{W}(W\times V)$ we have the equality in $\mathcal C(W)^{\exp}$
 $$<u_{\Phi_{W}},\left((\Phi_W\times \Id_V)^{*}1_{B(a,r_a)}\right)\cdot \varphi>=<(\Phi_W\times \Id_V)^{*}\psi,\varphi>.$$
 We consider the definable set $$\mA=\{(p,x,\xi)\in P\times T^{*}(V)\setminus \{0\} \mid x \in B(a,r_{a}(p)) \}$$ and the definable functions from $\mathcal A$ to $\mathbb Z$ defined by
$$r \colon \begin{cases}
    \mathcal A \longrightarrow \mathbb Z \\
     (p,x,\xi) \longmapsto  r_a(p)
   \end{cases}
\:\:,\:\:
   \check r \colon
   \begin{cases}
      \mathcal A \longrightarrow \mathbb Z \\
     (p,x,\xi)  \longmapsto 1+\ord \xi.
   \end{cases}
$$
The constructible exponential functions $<u_{\pi_{\Lambda_\n \times \mathcal D}},T>$ and $\mathcal F(\phi)(\lambda\xi)$ are equal
 with $\phi$ the Schwartz-Bruhat function in $S_{\D}(\Lambda_\n \times \D \times V)$ defined by
 $$\phi(\lambda,\left(p,x_0,\xi_0,x',r',\xi \right),x)=\psi(p,x)1_{B(x',r')}(x).$$
 The Fourier transform of $\phi$ is still a Schwartz-Bruhat function. Thus we obtain the relation
 $$ <u_{\pi_{\Lambda_\n \times \mathcal D}},T>1_{\E} = <u_{\pi_{\Lambda_\n \times \mathcal D}},T>1_{\E} 1_{B_N} $$
 with
 $$
 N \colon\begin{cases}
 \mathcal B \longrightarrow \mathbb Z \\
 \left(p,x_0,\xi_0,x',r'\right)  \longmapsto
  -\ord \xi_0 - \alpha^{-}(\mathcal F(\phi))(p,x_0,\xi_0,x',r').
\end{cases}
  $$
  Then, the quadruple $(\mA,r,\check{r},N)$ is a $\Lambda_\n$-microlocally smooth data of $u$. As there is no condition on $\xi$ in the definition of $\mA$, this prove that $a$ does not belong to the projection $\pi_{V}(\WF_{\Lambda_\n}(u))$.
 \end{proof}

\begin{thm}  \label{projectionWF}
	Let $P$ be a definable set, $\n \geq 1$, $V=h[m,0,0]$ with $m\geq 1$. We denote by $\pi:P\times T^{*}(V)\to V$ and $\pi_{V}:T^{*}(V)\to V$ the canonical projections.
Let $u$ be a definable distribution in $S'_{P}(V)$.
For any $\Lambda_{\n}$-definable microlocally smooth data $(\mA,r,\check{r},N)$ of $u$, the singular support of $u$ is contained in the projection $\pi(\mA^{c}),$
where $\mA^{c}$ is the complement of $\mA$ in $P\times (T^{*}(V)\setminus\{0\})$.
As a consequence, we have
the following inclusions
$$
\pi_{V}(\WF_{\Lambda_{\n}}(u))\subset \SS (u) \subset \bigcap_{\mA \in \mathscr A}
\pi(\mA^{c}),
$$
where $\mathscr A$ is the set of support of $\Lambda$-definable microlocally smooth
data.
\end{thm}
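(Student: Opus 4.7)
The first inclusion $\pi_V(\WF_{\Lambda_\n}(u))\subset\SS(u)$ was already established in Proposition \ref{p(WF)-included-in-suppsing}, so the main content of the theorem is the claim that $\SS(u)\subset\pi(\mA^c)$ for any fixed $\Lambda_\n$-microlocally smooth data $(\mA,r,\check r,N)$ of $u$. The second displayed inclusion then follows by intersecting over all such data.

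Fix $a\in V$ with $a\notin\pi(\mA^c)$, equivalently $\{p\}\times\{a\}\times\Bn\subset\mA$ for every $p\in P$. The plan is to produce a definable function $r_a:P\to\mathbb Z$ and a Schwartz-Bruhat function $\psi\in S_P(P\times V)$ satisfying $1_{B(a,r_a)}u = 1_{B(a,r_a)}\psi$ as definable distributions. The function $\psi$ will come from the motivic Paley-Wiener Theorem \ref{rem-distrib}: setting $\varphi:=1_{B(a,r_a)}$, it suffices to show that
$$u_\varphi \;=\; <u_{P\times V_\xi},\,\varphi\,E(\cdot\mid\cdot)>\;\in\;\mathcal C(P\times V_\xi)^{\exp}$$
has bounded support in the $\xi$-variable, for then $\psi:=\mathbb L^{m}\overline{\mathcal F}(u_\varphi)$ is a Schwartz-Bruhat function in $S_P(P\times V)$ equal as a distribution to $\varphi u$; multiplying by the idempotent $\varphi$ then yields $1_{B(a,r_a)}\psi = 1_{B(a,r_a)}u$, which is the smoothness of $u$ at $a$.

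The bounded support of $u_\varphi$ is obtained via two applications of the parametric definable compactness Corollary \ref{compacite-param}. Since $r$ and $N$ are continuous and definable on $\mA$ and $\mathcal B$ respectively, and $\{p\}\times\{a\}\times\Bn$ is a closed bounded slice inside $\mA$, Corollary \ref{compacite-param} first produces the definable function $r_a(p):=\max_{\tilde\xi\in\Bn}r(p,a,\tilde\xi)$. By construction $B(a,r_a(p))\subset B(a,r(p,a,\tilde\xi))$ for every $\tilde\xi\in\Bn$, so $(p,a,\tilde\xi,a,r_a(p))\in\mathcal B$, and a second application of Corollary \ref{compacite-param} yields a definable $N_0(p):=\max_{\tilde\xi\in\Bn}N(p,a,\tilde\xi,a,r_a(p))$. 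Now any nonzero $\xi\in V$ admits a unique decomposition $\xi=\lambda\tilde\xi$ with $\lambda\in\Lambda_\n$ and $\tilde\xi\in\Bn$. Evaluating the defining equation $(<u_{\Lambda_\n\times\mathcal D},T>1_{\mathcal E})\cdot 1_{B_N} = <u_{\Lambda_\n\times\mathcal D},T>1_{\mathcal E}$ of $(\mA,r,\check r,N)$ at the point $(p,a,\tilde\xi,a,r_a(p),\tilde\xi)\in\mathcal E$ gives
$$<u,\,1_{B(a,r_a)}\,E(\cdot\mid\lambda\tilde\xi)>\;=\;0\qquad\text{whenever}\qquad\ord\lambda<N(p,a,\tilde\xi,a,r_a(p)),$$
and a fortiori whenever $\ord\lambda<N_0(p)$. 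Since $\ord\lambda = \ord\xi-\ord\tilde\xi\leq\ord\xi$, one concludes that $u_\varphi(p,\xi)=0$ for $\ord\xi<N_0(p)$, so $u_\varphi$ is supported in the ball $B_{N_0}$: the hypothesis of Theorem \ref{rem-distrib} is satisfied.

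The main technical obstacle is precisely the extraction of the uniform bound $N_0(p)$ (and of $r_a(p)$): at each covector $\tilde\xi\in\Bn$ one has local thresholds $r(p,a,\tilde\xi)$ and $N(p,a,\tilde\xi,\cdot,\cdot)$, but the motivic Paley-Wiener argument requires a single $p$-definable threshold valid for all $\tilde\xi\in\Bn$ simultaneously. This replaces the compactness of the $p$-adic sphere invoked in Heifetz's original argument \cite{Hei85a} by the non-archimedean definable compactness Proposition \ref{definablecompactness} and its parametric refinement Corollary \ref{compacite-param}.
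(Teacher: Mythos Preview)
Your overall strategy coincides with the paper's: pick $a\notin\pi(\mA^c)$, use definable compactness on the $\Bn$-slice to manufacture $r_a$ and a uniform threshold, then feed $u_\varphi=<u,1_{B(a,r_a)}E(\cdot\mid\cdot)>$ into the Paley--Wiener Theorem \ref{rem-distrib}. Two genuine issues remain in the execution.

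First, a sign slip: you set $N_0(p)=\max_{\tilde\xi}N(p,a,\tilde\xi,a,r_a(p))$ and then claim ``a fortiori whenever $\ord\lambda<N_0(p)$''. But ``$\ord\lambda<N(\dots,\tilde\xi,\dots)\Rightarrow$ vanishes'' becomes a fortiori only for the \emph{minimum} over $\tilde\xi$, which is exactly what the paper takes ($\mathcal N_a(p)=\min N(p,a,-,\dots)$).

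Second, and more substantive, the passage from the defining equation in $\mathcal C(\Lambda_\n\times\mathcal D)^{\exp}$ to the bounded-support statement for $u_\varphi\in\mathcal C(P\times V_\xi)^{\exp}$ cannot be done by ``evaluating at the point $(p,a,\tilde\xi,a,r_a(p),\tilde\xi)$'' together with a ``unique decomposition $\xi=\lambda\tilde\xi$''. The decomposition is \emph{not} unique: $\Lambda_\n=\{x:\n\mid\ord x,\ \ac x=1\}$ fixes only $\ord\lambda$, not $\lambda$ itself, so there is no definable section $\xi\mapsto(\lambda,\tilde\xi)$ to pull back along. Moreover, in the motivic framework identities of constructible exponential functions are not established by pointwise checks; one must transport the equation via definable morphisms and the compatibility conditions of $u$. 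The paper carries this out explicitly: it pulls back along a definable map $i_a:(p,\lambda,\xi_0,\xi)\mapsto(\lambda,(p,a,\xi_0,a,2r_a(p),\xi))$ using the pull-back condition on $u$, and then pushes forward along the (many-to-one) multiplication map $\mathscr M:(\lambda,p,\xi_0,\xi)\mapsto(p,\lambda\xi)$ using the push-forward condition, after checking that the relevant test function is $(\mathscr M\times\Id_V)$-compatible. This is precisely the machinery that replaces your informal ``evaluate and decompose'' step; without it the argument is incomplete.
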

\begin{proof}
Let  $(\mA,r,\check{r},N)$ be a $\Lambda_{\n}$-definable microlocally smooth data.
Let $a$ be a point which is not in the projection $\pi(\mA^{c})$.
Then, for every $p$ in $P$, for every $\xi_0$ in $\Bn$,
the point $(p,a,\xi_0)$ belongs to $\mA$.
 For any parameter $p$ in $P$, by Proposition \ref{definablecompactness}, the restrictions of the definable and continuous maps $r(p,a,-)$ and $\check{r}(p,a,-)$ defined on the closed bounded and definable set $\mathscr B_{\mathfrak n}$
admit a maximum $r_{a}(p)$ and $\check{r}_{a}(p)$.
Applying Corollary \ref{compacite-param} the following functions are definable
$$
 r_{a} \colon
\begin{cases}
 P \longrightarrow \mathbb Z \\
 p  \longmapsto  r_a(p)=\max r(p,a,-)
\end{cases}
\:\:\text{and}\:\:
 \check{r}_{a} \colon
\begin{cases}
 P  \longrightarrow \mathbb Z \\
p  \longmapsto \check{r}_a(p)=\max \check{r}(p,a,-)
\end{cases}.
$$
Using these morphisms we define a convenient $\Lambda$-definable data $(B_a,R_a,\check{R}_a, N_{a})$.
\begin{itemize}
	\item  We consider
$$B_a = \{(p,x_0,\xi_0) \in P\times V_{x_0} \times \Bn \mid x_0 \in B(a,r_a(p))\}.$$
 By construction of $r_a$, $B_a$ is contained in $\mA$.
\item  For all $(p,x_0,\xi_0)$ in $B_a$ we denote
$$
 R_{a} \colon\begin{cases}
B_a \longrightarrow \mathbb Z \\
 (p,x_0,\xi_0) \longmapsto  r_{a}(p)
\end{cases}
\:\:\text{and}\:\:
 \check{R}_{a} \colon
 \begin{cases}
 B_a \longrightarrow \mathbb Z \\
 (p,x_0,\xi_0) \longmapsto \check{r}_a(p).
 \end{cases}
$$
 We set
$$\mathcal B_a:=\{((p,x_0,\xi_0),x',r') \in B_a\times V_{x'} \times \mathbb Z \mid B(x',r')\subset  B(x_0,r_a(p))\}$$
and we consider
$$\mathcal E_{a} := \{((p,x_0,\xi_0,x',r'),\xi) \in \mathcal B_a \times V_\xi \mid \xi \in B(\xi_0, \check{r}_a(p)) \}.$$
\item  Using Corollary \ref{compacite-param} we consider the definable and continuous functions
$$ N_{a} \colon
\begin{cases}
 \mathcal B_a \longrightarrow \mathbb Z \\
 (p,x_0,\xi_0,x',r')  \longmapsto  \min  N(p,x_0,-,x',r')
\end{cases}
\:\text{and}\:\:
\mathcal N_{a} \colon
\begin{cases}
 P \longrightarrow \mathbb Z \\
 p  \longmapsto \min  N(p,a,-,a,2r_{a}(p))
\end{cases}.
$$
\end{itemize}
By Proposition \ref{restriction-data} the quadruple $(B_a,R_a,\check{R}_a, N_{a})$ is a definable data of $u$ and we prove below the equality in $\mathcal C(P\times V_{\xi'})^{\exp}$
\begin{equation}  \label{egalite}
 <u_{P\times V_{\xi'}},1_{B(a,2r_{a})}E(\cdot\mid \cdot))> 1_{B_{\N_{a}}} =
<u_{P\times V_{\xi'}},1_{B(a,2r_{a})}E(\cdot\mid \cdot))>.
\end{equation}
Then, using Proposition \ref{rem-distrib} this constructible exponential function is also a Schwartz-Bruhat function in $S_{P}(P\times V_{\xi'})$, its inverse Fourier transform represents
the definable distribution $\mathbb L^{-m} u1_{B(a,2r_{a})}$ and is a Schwartz-Bruhat function. Thus, $a$ does not belong to
the singular support of $u$.\\
We prove now equality (\ref{egalite}).
By definition
we have the equality
\begin{equation}  \label{relation}
<u_{\Lambda_\n \times \D},T>1_{\E_a}1_{B_{N_a}} =
<u_{\Lambda_\n \times \D},T>1_{\E_a} \in \mathcal C(\Lambda_\n \times \D)^{\exp}.
\end{equation}
We consider the morphisms
$$
\begin{array}{ccc}
i_{a} \colon
\begin{cases}
 \Lambda_\n \times P \times V_{\xi_0,\xi} \longrightarrow \Lambda_\n \times \D \\
 (\lambda,p,\xi_0,\xi)  \longmapsto  (\lambda,(p,a,\xi_0,a,2r_{a}(p),\xi))
\end{cases}
& \text{and} & 
\mathscr{M} \colon
\begin{cases}
\Lambda_\n \times P \times V_{\xi_0,\xi} \longrightarrow P \times V_{\xi'} \\
 (\lambda,p,\xi_0,\xi)  \longmapsto  (p, \xi' = \lambda \xi)
\end{cases}
\end{array}.
$$
By relation (\ref{relation}) and the pull-back compatibility relation of $u$
we obtain the equality
$$<u_{\Lambda_\n \times P \times V_{\xi_0,\xi}},(i_{a}\times Id_{V})^{*}T>i^{*}_{a}
(1_{\E_a}1_{B_{N_a}}) =
<u_{\Lambda_\n \times P \times V_{\xi_0,\xi}},(i_{a}\times Id_{V})^{*}T>i^{*}_{a}(1_{\E_a}),$$
with
$$(i_{a}\times Id_V)^{*}T \colon  (\lambda,p,\xi_0,\xi,y)   \longmapsto  1_{B(a,2r_{a}(p))} E(y\mid \lambda \xi),\:
i_{a}^{*}1_{\E_a} \colon (\lambda,p,\xi_0,\xi)  \longmapsto  1_{B(\xi_0,\check{r}_a(p)}(\xi)$$
and
$$i_{a}^{*} 1_{B_{N_a}} \colon (\lambda,p,\xi_0,\xi)  \longmapsto 1_{B_{\N_a(p)}}(\lambda).$$
By Proposition \ref{SB-exp} the constructible functions
$(i_{a}\times Id_{V})^*(T)i_{a}^{*}(1_{\E_a}1_{B_{N_a}})$ and $(i_{a}\times Id_{V})^{*}(T)i_{a}^{*}(1_{\E_a})$ are $(\mathscr M \times \Id_V)$-compatible Schwartz-Bruhat functions in
$S_{P\times V_{\xi'}}(P\times V_{\xi',x})$. Then,
applying the push-forward morphism $\mathscr M_!$ and the push-forward compatibility relation of $u$ we obtain the equality which induces equality \ref{egalite}
$$<u_{P \times V_{\xi'}},(\mathscr M \times \Id_V)_{!} \left((i_{a}\times Id_{V})^*(T) \cdot i_{a}^{*}(1_{\E_a}1_{B_{N_a}}))\right)>  =
  <u_{P \times V_{\xi'}},(\mathscr M \times \Id_V)_{!} \left((i_{a}\times Id_{V})^{*}(T) \cdot i_{a}^{*}(1_{\E_a})\right)>.
$$
\end{proof}

\subsection{Pull-back}
In the following theorem we explain how to construct the pull-back $f^*u$ of a definable distribution
relatively to a convenient $\Lambda$-microlocally smooth data. For any $m_x$, $m_y$, $\n$ positive integers, when there is no confusion, we will simply denote by $\Bn$ the definable sets $\Bn^{(m_x)}$ of $h[m_x,0,0]$ and
$\Bn^{(m_y)}$ of $h[m_y,0,0]$.

\begin{thm}  \label{thmf^{*}u}
Let $m_x$, $m_y$ and $\n$ be positive integers. Let $\pi$ be the canonical projection of $T^*(V_y)$ to $V_y$. Let
$$f:V_x=h[m_x,0,0]\ra V_y=h[m_y,0,0]$$
be a $\mathcal C^1$ definable function. Let $P$ be a definable set and $u$ be a definable distribution in $S'_{P}(V_y)$. We assume 
\begin{enumerate}
\item  \label{defdataprod}
the distribution $u$ admits a $\Lambda$-microlocally smooth data $(\mA=P\times A,r,\check{r},N)$, with $A$ an open and closed definable set of $V_y\times \Bn$. We assume the projection $\pi(A)$ to be an open and closed definable set of $V_y$ and for any $y_0$ in $\pi(A)$, we will suppose
$A_{y_0} = \{\xi \in \Bn \mid (y_0,\xi)\in A\}$ to be closed and open. 
\item  \label{delta} there is $\delta>0$, $N_\delta$ in $\mathbb Z$ such that 
for any $(x,\xi)$ in $f^{-1}(\pi(A)) \times \Bn$, if $(f(x),\xi)\notin A$ then $\abs{^{t}df(x)\xi}\geq \delta >0$ and $\ord \:^{t}df(x)\xi \leq N_{\delta}$.
In particular this implies the inclusion
$$N_{f}:=\{(y,\xi)\in T^{*}(\pi(A))\setminus \{0\} \mid \exists x \in f^{-1}(\pi(A)),\: y=f(x), \:^{t}df(x)\xi=0\}\subset A.$$
\item  \label{Rf}
there is a definable function $R_{f}$ such that
\begin{equation} \label{eqTaylor} (f(x+z)\mid \xi)=(f(x)\mid \xi)+(df(x)z\mid \xi)+(R_{f}(x,z,\xi)z\mid z). \end{equation}
There is a constant $N_{R}$, such that for any $x$ in $f^{-1}(\pi(A))$, for any $z$ in $V_x$ and any
$\xi$ in $\Bn$ we have $\ord R_{f}(x,z,\xi) \geq N_{R}$.
\end{enumerate}
With these assumptions
\begin{enumerate}
\item there is a definable distribution $f^{*}u$ in
$S'_P(f^{-1}(\pi(A)))$ such that
we have the inclusion
$$\WF_{\Lambda_\n}(f^{*}u) \:\: \subset \:\: f^{*}(\Lambda_\n A^{c})\cup U_{f,A} $$
where
$$\Lambda_\n A^{c} = \{(y,\lambda \xi) \in T^{*}(V_y) \setminus \{0\} \mid (y,\xi) \notin A\},\:
U_{f,A}=\{(x,\eta)\in f^{-1}(\pi(A))\times V_\eta \mid \eta \notin \mathrm{im}\: ^{t}df(x)\}$$
and
$$f^{*}(\Lambda_\n A^{c})=\{(x,\eta)\in T^{*}(f^{-1}(\pi(A)))\setminus \{0\} ,\: \exists (y,\xi)\in \Lambda_\n A^{c}, y=f(x),\: ^{t}df(x)\xi=\eta\}.$$

\item for any $(B,r_B, \check{r}_{B}, N_B)$ a $\Lambda$-definable microlocally smooth data of $u$ with same assumptions such that $\pi(B)\cap \pi(A)$ is non empty the two pull-backs are equal on the Schwartz-Bruhat functions of
$S_{W}(f^{-1}(\pi(A)\cap \pi(B)))$. \\

\item if $u$ is a Schwartz-Bruhat function in $S_{P}(P\times V)$ the construction gives
$$f^{*}u= \mathbb L^{-m_x} u\circ (\Id_P\times f).$$
\end{enumerate}

\end{thm}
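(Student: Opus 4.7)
The plan is to mimic the $p$-adic construction of Heifetz \cite{Hei85a}, defining $f^*u$ via a Fourier-inversion formula that respects the microlocally smooth data $(\mA=P\times A,r,\check r,N)$. Formally, one would like
\[
<(f^*u)_{\Phi_W}, \varphi>\;=\;\mathbb L^{-m_y}\,<u_{\Phi_W\times V_\xi},\;(y,\xi)\mapsto\int_x\varphi(w,x)\,E(f(x)-y\mid\xi)\,dx>,
\]
for $\varphi\in S_W(W\times f^{-1}(\pi(A)))$, and then to verify the axioms of the extension Theorem \ref{extension-theorem}. Since the motivic average formula reduces everything to values on characteristic functions $t_{\alpha^-,\alpha^+}$ of balls $B(z,\alpha^+)\subset f^{-1}(\pi(A))$, I would first make sense of $<(f^*u)_{\Phi_W\times V_z},t_{\alpha^-,\alpha^+}>$ via the displayed formula and then extend by the average formula.

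The main analytic ingredient is a dichotomy for the oscillatory integral $I(\xi)=\int_{B(z,\alpha^+)}E(f(x)-y\mid\xi)\,dx$. When $\xi$ ranges over the microlocally smooth set $A$ (or a $\Lambda_\n$-translate of it), the pairing of $u$ with $I(\xi)$ against a cutoff on $A$ is controlled by the function $N$ of the smoothness data: the bounded-support condition in Definition \ref{mls-data} gives $W$-integrability. When $\xi$ lies outside $A$, hypothesis (\ref{delta}) provides $\ord\,^t df(x)\xi\leq N_\delta$ and hypothesis (\ref{Rf}) provides $\ord R_f(x,z,\xi)\geq N_R$, which are exactly the two assumptions of the motivic stationary phase Proposition \ref{oscillante}; its conclusion yields a uniform bounded-support statement in $\lambda\in\Lambda_\n$ for $\lambda\mapsto I(\lambda\xi)$, ensuring integrability of the $A^c$-contribution. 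The pullback and pushforward compatibility conditions of the extension theorem then follow from the corresponding conditions on $u$, combined with the Fubini-type Proposition \ref{compatibilite-pull-back-push-forward} applied to the diagrams involving $f\times\Id$ and Fourier inversion.

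The three listed conclusions are handled in turn. For the wave front bound, given $(x_0,\eta_0)$ outside $f^*(\Lambda_\n A^c)\cup U_{f,A}$, we write $\eta_0={}^t df(x_0)\xi_0$ with $(f(x_0),\xi_0)\in A$, and transfer the data $(r,\check r,N)$ of $u$ at $(f(x_0),\xi_0)$ to a smoothness data for $f^*u$ at $(x_0,\eta_0)$; the continuity of $f$, ${}^t df$, and of $r,\check r, N$, together with the openness properties built into hypothesis (\ref{defdataprod}), yield the required definable data. The consistency statement when $u\in S_P(P\times V_y)$ is a direct Fubini computation: applying the inversion Theorem \ref{thm:four}, the $\xi$-integration recovers $u$ and one obtains $\mathbb L^{-m_x}\,u\circ(\Id_P\times f)$. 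Independence from the data is immediate from the motivic average formula and the uniqueness clause of Theorem \ref{extension-theorem}: both constructions yield the same values on characteristic functions of balls contained in $f^{-1}(\pi(A)\cap\pi(B))$ and hence define the same distribution on $f^{-1}(\pi(A)\cap\pi(B))$.

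The main technical obstacle will be verifying the pushforward compatibility in the extension theorem, where the definable functions controlling the oscillatory estimate must be bounded uniformly in $w\in W$. This is where definable compactness of $\Bn$ enters through Corollary \ref{compacite-param}: it produces the uniform definable bounds allowing Proposition \ref{oscillante} to be applied with parameters in $W$ and guaranteeing that the resulting object is genuinely an element of $\mathcal C(W)^{\exp}$ varying functorially in $\Phi_W$.
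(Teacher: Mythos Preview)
Your proposal follows essentially the same route as the paper: define $f^*u$ on the test functions $t_{\alpha^-,\alpha^+}$ via the Heifetz-type formula (with a localizing cutoff $\chi$ in $y$ around $f(z)$, built from a radius $R_y$ obtained by definable compactness), split the $\xi$-integration into $\Lambda_\n A$ and $\Lambda_\n A^c$, control the first piece by the data $N$ and the second by Proposition~\ref{oscillante}, then invoke Theorem~\ref{extension-theorem}. The only point your sketch understates is the wave-front bound: transferring the data at a single preimage $\xi_0$ is not enough, and the paper runs the same $A$/$A^c$-type dichotomy a second time in the $\xi$-variable (now relative to a cone $\mathcal C_{x_0,\eta_0,p}$ around the full fiber $^t df(x_0)^{-1}(\eta_0)\cap A_{f(x_0)}$), applying Proposition~\ref{oscillante} again on the complement where $^t df(x_0)\xi-\eta$ is bounded away from zero.
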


\begin{rem}
By the second point of the theorem we can patch all the $f^*u$ along some $\Lambda$ definable data (all of them if $P$ is for instance a point). In the real and $p$-adic settings it is proved that extension of the pull-back $f^*$ from the definable distribution defined by smooth functions to the definable distribution is unique (see \cite[Theorem 8.2.4]{Hormander83} and \cite[Theorem 2.9.3]{CHLR}). In our setting, there is no topology on motives which can give such unicity. Nevertheless, the construction here is parallel to the construction of the $p$-adic and real case. By specialization theorems in \cite{CluLoe10a} of motivic integrals to $p$-adic integrals, this construction specializes on the construction of the pull-back in the $p$-adic setting. In particular, in  Remark \ref{localization} below, we recover  Theorem 2.8 of Heifetz  \cite{Hei85a}.
\end{rem}

\begin{proof}
 We start defining optimal radius functions, useful for the proof.\\

$\bullet$ We define a continuous definable map $R_{y}:P \times \pi(A)\ra \mathbb Z$ such that
\begin{equation} \label{*-property}
 \forall p\in P,\: \forall y_0 \in \pi(A),\:\forall (y,\xi)\in B(y_0,R_y(p,y_{0}))\times \Bn,
(y,\xi)\in A \Leftrightarrow (y_0,\xi)\in A.
\end{equation}
Indeed, for any $p$ in $P$, for any $y_0$ in $\pi(A)$ by definable compactness (Corollary \ref{compacite-param}) the definable and continuous function $r(p,y_0,-)$ admits a maximum on the closed and bounded definable set $A_{y_0}$ and we consider the definable function
$$r_{y_0,\max} \colon
\begin{cases}
 P \longrightarrow \mathbb Z \\
 p  \longmapsto  \max r(p,y_0,-)_{\mid A_{y_0}}.
  \end{cases}
$$
Similarly, we consider the definable and continuous function
$$R_{y} \colon
\begin{cases}
 P \times \pi(A) \longrightarrow \mathbb Z \\
 (p,y_0)  \longmapsto  \max r(p,-,-)_{\mid A\cap (B(y_0,r_{y_{0},\max}(p))\times \Bn)}.
  \end{cases}
$$
Let $p$ be in $P$, $y_0$ be in $\pi(A)$ and $(y,\xi)$ be in $B(y_0,R_y(p,y_{0}))\times \Bn$.
\begin{itemize}
	\item  If the point $(y_0,\xi)$ belongs to $A$ then by Definition \ref{mls-data} we have the inclusion
		$$B(y_0,r(p,y_0,\xi))\times (B(\xi, \check{r}(p,y_0,\xi)) \cap \Bn) \subset A,$$
		and $(y,\xi)$ belongs to $A$ because by definition the ball $B(y_0,R_y(p,y_{0}))$ is contained in the ball $B(y_0,r(p,y_0, \xi))$.
	\item  If the point $(y,\xi)$ belongs to $A$, then again by definition of $R_y(p,y_{0})$ we have the inclusion
		$$B(y,R_y(p,y_{0}))\times (B(\xi, \check{r}(p,y,\xi)) \times \Bn) \subset
		B(y,r(p,y,\xi))\times (B(\xi, \check{r}(p,y,\xi)) \times \Bn) \subset A$$
		and we conclude that $(y_0,\xi)$ belongs to $A$ by the equality of balls
		$ B(y,R_y(p,y_{0})) $ and $B(y_0,R_y(p,y_{0})).$
\end{itemize}

 $\bullet$ By Corollary \ref{compacite-param}, the construction and the continuity of $\check r$, we define a continuous definable function
$$ \check R_{y} \colon
\begin{cases}
P \times \pi(A) \longrightarrow \mathbb Z \\
(p,y_0)  \longmapsto \max \check{r}(p,y_0,-)_{\mid A_{y_0}}.
  \end{cases}
$$
By assumption \ref{condr} on $\check{r}$, we have $\check R_{y}(p,y_0)\geq \n \geq \ord \xi_0+1$ for any $\xi_0$ in $\Bn$.

$\bullet$ By Corollary \ref{compacite-param}, the construction and the continuity of $N$, we define a continuous definable map
$$ \overline{N} \colon
\begin{cases}
\mathcal B \longrightarrow \mathbb Z \\
 (p,y_0,\xi_0,y',r')  \longmapsto  \min N(p,y_{0},-,y',r')_{\mid A_{y_0}}
  \end{cases}
$$
 with
  $$\mathcal B=\{\left((p,y_0,\xi_0),y',r'\right)\in \mA \times V_{y'} \times \mathbb Z \mid
  B(y',r')\subset B(y_0,r(p,y_0,\xi_0))\}.$$
  By definition of $\overline{N}$ and by Proposition \ref{restriction-data}, the quadruple
$(A,r,\check r, \overline{N})$ is a definable data.\\

$\bullet$ We define a continuous definable map
$$  \mathcal{N} \colon
\begin{cases}
 P\times \pi(A) \longrightarrow \mathbb Z \\
   (p,y_0)  \longmapsto \min N(p,y_{0},-,y_0,R_y(p,y_{0})+1)_{\mid A_{y_0}}.
  \end{cases}
$$
Again the existence follows from Corollary \ref{compacite-param}.
In particular, for any $(p,y_0)$ in $P\times \pi(A)$, for any $\xi_0$ in $A_{y_0}$, we have
$$\mathcal N(p,y_0)=\overline{N}(p,y_0,\xi_0,y_0,R_y(p,y_{0})+1)$$
The continuity comes from the construction and the continuity of $N$ and $R_y$.\\

$\bullet$ We define a definable and continuous map $R_x:P\times f^{-1}(\pi(A))\ra \mathbb Z$.\label{R_x(x_0)}
As $f$ is continuous, for any $p$ in $P$, for any $y_0$ in $\pi(A)$, the inverse image
$f^{-1}(B(y_{0},R_y(p,y_{0})))$ is open,
and for any $x_0$ in the fiber of $y_0$, we denote  by $R_x(p,x_0)$ the smallest integer such that the ball $B(x_{0},R_x(p,x_0))$ is contained in the inverse image  $f^{-1}(B(y_{0},R_y(p,y_{0})))$
$$
 R_x(p,x)  = \min \left\{a\in \mathbb Z \mid f(B(x,a))\subset B\left(f(x),R_y(p,f(x))\right)\right\}.$$
namely
 $$R_x(p,x) = \min \{a \in \mathbb Z \mid \forall x'\in V_x,\: \ord (x'-x)\geq a \Rightarrow \ord (f(x')-f(x))\geq R_y(p,f(x))\}.$$
As the function $(p,x)\mapsto R_y(p,f(x))$ is definable, $R_x$ is also definable and continuous (locally constant by construction).\\

$\bullet$ For any $\Phi_W$ in $\Def_P$ we denote by
$$R_{y,\Phi_W}=R_{y}\circ \left(\Phi_W \times \Id_{\pi(A)}\right),
\:\check{R}_{y,\Phi_W}=\check{R}_y\circ \left(\Phi_W \times \Id_{\pi(A)}\right),$$
and
$$
\mathcal N_{\Phi_W}=\mathcal N\circ \left(\Phi_W \times \Id_{\pi(A)}\right),
\:R_{x,\Phi_W}=R_{x}\circ \left(\Phi_W \times \Id_{f^{-1}(\pi(A))}\right).$$

 \textbf{1. Definition of $(f^*u)_{\Phi_W}$}.
  In order to define the definable distribution $f^*u$ we will use the extension Theorem \ref{extension-theorem}.
 We fix a definable morphism $\Phi_W$ in $\Def_P$ and a definable function $\alpha^{-} : W \ra \mathbb Z$.
 By assumption $\pi(A)$ is closed, then $f^{-1}(\pi(A))$ is closed and by Proposition \ref{definablecompactness} we may consider the definable map
 $$ R_{x,\alpha^{-}} \colon
  \begin{cases}
   W \longrightarrow \mathbb Z \\
   w  \longmapsto  \max \{R_{x,\Phi_W}(w,x)\mid x \in f^{-1}(\pi(A)),\: \ord x \geq \alpha^{-}(w)\}.
    \end{cases}
 $$
 We consider a definable function $\alpha^{+}:W\ra \mathbb Z$ satisfying the inequalities
\begin{equation}  \label{inegalites-alpha}
\alpha^{+} \geq R_{x,\alpha^{-}}\:\:\mathrm{and}\:\:\alpha^+ \geq \alpha^-.
\end{equation}
Inspired by the proof in the $p$-adic case in \cite{Hei85a} or \cite{CHLR}, we explain the strategy of the construction.
 \begin{enumerate}
	 \item We show the existence of the following constructible exponential function in $\mathcal C(W \times V_z)^{\exp}$
 \begin{equation}  \label{def-f*u-alpha}
  <(f^*u)_{\Phi_W \times V_z}, t_{\alpha^-,\alpha^+}> := \pi_{W\times V_z !} (\psi I_{f,\alpha^{-},\alpha^{+}})
 \end{equation}
 where $\psi$ is the constructible exponential function in $\mathcal C(W\times V_{z,\xi})^{\exp}$
 $$
 \psi : (w,\xi,z) \mapsto \left<u_{\Phi_W \times V_{\xi,z}}, y\mapsto \chi_{\Phi_W}(w,z,y)E(y\mid \xi)\right>,
 $$
 with
 $$\chi_{\Phi_{W}} = (\Phi_{W} \times \Id_{V_{z,y}})^{*}\chi,\:\:\:\: \chi : (p,z,y)\mapsto 1_{B(f(z),R_y(p,f(z))}(y)$$
 and $I_{f,\alpha^{-},\alpha^{+}}$ is the constructible exponential function in $\mathcal C(W\times V_{z,\xi})^{\exp}$
 $$
 I_{f,\alpha^{-},\alpha^{+}} : (w,\xi,z)  \mapsto
 \int_{x\in V_x} 1_{B_{\alpha^{-}(w)}}(x) 1_{f^{-1}(\pi(A))}(x)
 1_{B_{\alpha^{+}(w)}}(x-z) E(-f(x)\mid \xi)dx.
 $$
  \item We prove the assumptions of the extension Theorem \ref{extension-theorem} and by its application
 we define for any
 $\varphi$ in $S_{W}(W\times V)$ the constructible exponential function in $\mathcal C(W)^{\exp}$
 \begin{equation} \label{evalf*u}
	 <(f^*u)_{\Phi_W},\varphi>=
 \pi_{W!}\left(\mathbb L^{\alpha^{+}(\varphi) m_x} \mathbb  \varphi
 <(f^*u)_{\Phi_W \times V_z},t_{\alpha^-(\varphi),\alpha^+(\varphi)}> \right)
 \end{equation}
 choosing $\alpha^{+}(\varphi)\geq R_{x,\alpha^{-}(\varphi)}$.
 \item We consider the restriction of $f^*u$ to $f^{-1}(\pi(A))$ in $S'_{P}(f^{-1}(\pi(A)))$.

 \end{enumerate}

  \textbf{Step 1. Existence of $<(f^*u)_{\Phi_W \times V_z},t_{\alpha^-,\alpha^+}>$.}
  Let $\Phi_W$ a definable morphism in $\Def_P$ and $\alpha^+$ and $\alpha^-$ two definable morphims from $W$ to $\mathbb Z$ satisfying conditions \ref{inegalites-alpha}. We consider
  $$(\Lambda_\n A)_{W} = \{(w,z,\xi) \in W\times V_{z,\xi} \mid
\ord z \geq \alpha^{-}(w)\:\:\mathrm{and}\:\:\exists \lambda \in \Lambda_\n, (f(z),\lambda^{-1}\xi)\in A\}$$
and
$$(\Lambda_\n A^{c})_{W} = \{(w,z,\xi)\in W\times V_{z,\xi} \mid \ord z \geq \alpha^{-}(w)\:\:\mathrm{and}\:\:\forall \lambda \in \Lambda_\n, (f(z),\lambda^{-1}\xi)\notin A\}.$$
These sets are definable and we have the decomposition
$$B_{\alpha^{-}(w)}\times V_{\xi} = \Lambda_\n A_w \sqcup \Lambda_\n A^{c}_w.$$

$\bullet$ By $\Lambda_{\n}$-microlocally smoothness, we define
$C_A : \{(w,z)\in W \times V_z \mid z\in B_{\alpha^{-}(w)}\} \ra \mathbb Z$ a definable function such that
\begin{equation}  \label{relation-psi}
\psi \cdot 1_{(\Lambda_{\n} A)_W}1_{B_{C_A}} = \psi\cdot1_{(\Lambda_{\n} A)_W},
\end{equation}

with
$B_{C_{A}} = \{(w,z,\xi)\in W\times V_{z,\xi} \mid \ord \xi \geq C_{A}(w,z)\}.$

$\bullet$ By assumption on $f$ and $A$, we define $C_{A^c} : \{(w,z)\in W \times V_z \mid z\in B_{\alpha^{-}(w)}\} \ra \mathbb Z$ a definable function
such that
\begin{equation}  \label{relation-I}
I_{f,\alpha^{-},\alpha^{+}}\cdot1_{(\Lambda_\n A^{c})_W}1_{B_{C_{A^c}}}=I_{f,\alpha^{-},\alpha^{+}}\cdot1_{(\Lambda_\n A^{c})_W},
\end{equation}

with
$B_{C_{A^c}} = \{(w,z,\xi)\in W\times V_{z,\xi} \mid \ord \xi \geq C_{A^c}(w,z)\}.$

$\bullet$ Defining the constructible exponential function $\mathcal I_{A}$ by
$$\mathcal I_A(w,z,\xi) =
1_{(\Lambda_\n A)_W}(w,z,\xi) 1_{B_{C_{A}(w)}}(\xi) +
1_{(\Lambda_\n A^c)_W}(w,z,\xi) 1_{B_{C_{A^c}(w)}}(\xi),$$
 we obtain the equality
$$\psi I_{f,\alpha^-, \alpha^+} = \psi I_{f,\alpha^-,\alpha^+}\mathcal I_A.$$
The function $\psi I_{f,\alpha^{-},\alpha^{+}}$ will be $\xi$-integrable which will imply by \ref{def-f*u-alpha} the existence of the constructible exponential 
function $<(f^*u)_{\Phi_W \times V_z},t_{\alpha^-,\alpha^+}>$ in $\mathcal C(W\times V_z)^{\:\exp}$.\\

\textbf{Existence of $C_A$}.
As $(A,r,\check r, \overline{N})$ is a $\Lambda$-microlocally smooth data, using notations $\E$, $\mathcal D$ and $T$ of Definition \ref{mls-data}  we have the equality 
$$<u_{\Lambda_\n \times \mathcal D }, T>1_{\E}1_{B_{\overline{N}}} = <u_{\Lambda_\n \times \mathcal D }, T>1_{\E} \in C(\Lambda_\n \times \mathcal D)^{\exp}.$$
We consider now the morphism
$$i_{f}  \colon
\begin{cases}
 P\times \Lambda_\n \times V_{z,\xi_0,\xi} \longrightarrow \Lambda_\n \times \mathcal D \\
 (p,\lambda,z,\xi_{0},\xi)  \longmapsto  \left(\lambda, \left(p,f(z),\xi,f(z),R_{y}(p,f(z)),\xi\right)\right).
\end{cases}
$$
By the pull-back compatibility relation of $u$ we obtain the equality in the ring $\mathcal C(P\times \Lambda_\n \times V_{z,\xi_0,\xi})^{\exp}$
$$i_{f}^{*}\left(<u_{\Lambda_\n \times \mathcal D }, T>1_{\E}1_{B_{\overline{N}}}\right) =
<u_{P\times \Lambda_\n \times V_{z,\xi_0,\xi} },(i_{f}\times Id_{V})^{*}T>i_{f}^{*}(1_{\E})i_{f}^{*}(B_{\overline{N}})
$$
with
$$(i_{f}\times Id_V)^* T \colon ((p,\lambda,z,\xi_{0},\xi),y) \longmapsto  \chi(p,z,y)E( y\mid \lambda \xi)\:\:,\:\:
i_{f}^{*}(1_{\E}) \colon(p,\lambda,z,\xi_{0},\xi)  \longmapsto  1_{B(\xi_{0},\check{R}_{y}(p,f(z)))}(\xi)
$$
and
$$
i_{f}^{*}(1_{B_{\overline{N}}}) \colon (p,\lambda,z,\xi_{0},\xi)   \longmapsto  1_{B_{\mathcal{N}(p,f(z))}}(\lambda).
$$
We consider the morphism
$$m \colon
\begin{cases}
 P\times \Lambda_\n \times V_{z} \times V_{\xi_{0}}\times V_{\xi}  \longrightarrow  P\times V_{z} \times V_{\xi'} \\
 (p,\lambda,z,\xi_{0},\xi) \longmapsto  (p,z,\xi':=\lambda\xi).
\end{cases}
$$
The constructible exponential function $(i_{f}\times Id_{V})^{*}(T) \cdot i_{f}^{*}(1_{\E} 1_{B_{N}})$ is a $(m\times \Id_{V_y})$-compatible Schwartz-Bruhat function in
$S_{P\times \Lambda_\n \times V_{z,\xi_0,\xi}}(P\times \Lambda_\n \times V_{z,\xi_0,\xi}\times V_y)$, then by the push-forward compatibility relation of $u$ we obtain the equality
$$m_{!} \left<u_{P\times \Lambda_\n \times V_{z,\xi_0,\xi}},(i_{f}\times Id_{V})^{*}(T) \cdot i_{f}^{*}(1_{\E} 1_{B_{N}}) \right> =
\left<u_{P\times V_{z,\xi'} },(m \times \Id_{V_{y}})_{!}\left(\left(i_{f}\times Id_{V})^{*}(T) \cdot i_{f}^{*}(1_{\E} 1_{B_{N}}\right)\right)\right>
$$
and we deduce
$$<u_{P\times V_{z,\xi'}}, \zeta > 1_{\{\ord \xi' \geq \mathcal N(p,f(z))\}}= <u_{P\times V_{z,\xi'}},\zeta>.$$
using the constructible exponential function  $\mathcal C(P\times V_{z,\xi',y})^{\exp}$
$$\zeta : (p,z,\xi',y) \mapsto E( y\mid \xi') 1_{(\Lambda_\n A)_P}(p,z,\xi') \chi(p,z,y).$$
By pull-back relation of $u$ by $(\Phi_W \times \Id_{V_{\xi',_z}})$, we obtain relation (\ref{relation-psi}) with $C_A$ the definable map 
$(w,z) \mapsto \mathcal N(\Phi_W(w),f(z))$.\\

\textbf{Existence of $C_{A^c}$}.  \label{C_A^c}
We consider the definable morphism
$$m\colon
\begin{cases}
 \Lambda_\n \times V_{\xi} \times W \times V_z \longrightarrow V_{\xi'} \times W \times V_z\\
 (\lambda, \xi, w, z)  \longmapsto  (\xi':=\lambda\xi, w,z)
\end{cases}
$$
We consider the constructible exponential function
$$J:(\lambda, \xi, w, z) \mapsto I_{f,\alpha^{-},\alpha^{+}}(w,\lambda \xi,z).$$
If a couple $(x,\xi)$ belongs to $B(z,R_{x,\Phi_W}(w,z))\times \Bn$ then by definition of $R_{x,\Phi_W}(w,z)$,
 $f(x)$ belongs to the image $f(B(z,R_{x,\Phi_W}(w,z)))$ contained in the ball $B(f(z), R_{y}(\Phi_W(w),f(z)))$, and by definition of $R_y$, the couple $(f(x),\xi)$ belongs to $A$ if and only if the couple $(f(z),\xi)$ belongs to $A$. Thus,
for any $(w,z,\lambda\xi)$ in $(\Lambda_n A^{c})_W$, for any $x$ in $B(z,R_x(\Phi_W(w),z))$
the couple $(f(x),\xi)$ does not belong to $A$ and by assumption we have
$\abs{^{t}df(x)\xi}\geq \delta >0\:\:\mathrm{namely} \:\: \ord ^{t}df(x)\xi \leq N_{\delta}.$
By Proposition \ref{oscillante}, we conclude that
$$J \cdot 1_{\{\ord \lambda \geq C_{A^c}(w)\}} = J$$
with $C_{A^c}$ the definable map $-\max(N_{\delta}-N_{R}+1, \alpha^+)-N_{\delta}$.
By the direct image by $m$ and Fubini we deduce relation (\ref{relation-I}).\\

\textbf{Integrability of $\psi I_{f,\alpha^{-},\alpha^{+}}$}.
We consider the constructible exponential function
$$\theta :(w,z,y,x,\xi) \mapsto
\chi_{\Phi_W}(w,z,y) E(y-f(x)\mid \xi)t_{\alpha^-,\alpha^+}(w,z,x)
1_{f^{-1}(\pi(A))}(x) \mathcal I_A(w,z,\xi)$$
with
$$\mathcal I_A(w,z,\xi) =
1_{(\Lambda_\n A)_W}(w,z,\xi) 1_{B_{C_{A}(w)}}(\xi) +
1_{(\Lambda_\n A^c)_W}(w,z,\xi) 1_{B_{C_{A^c}(w)}}(\xi),$$
and $$t_{\alpha^-, \alpha^+}(w,z,x)=1_{B^{\alpha^+(w)}}(z-x)1_{B^{\alpha^-(w)}}(x).$$

As each variable $x$ and $\xi$ is bounded in the parameters $w$ and $z$ by the relations
\begin{equation}  \label{inegalite-x-xi}
\ord x \geq \alpha^{-}(w)\:\: \mathrm{and } \:\: \ord \xi \geq \min(C_{A}(w),C_{A^c}(w))
\end{equation}
the constructible exponential function $\theta$ is $\pi_{W\times V_z}$-integrable and $\pi_{W\times V_z \times V_\xi}$-integrable.

Using Proposition \ref{SB-exp}, we deduce that $\theta$ is a  $(\pi_{W\times V_{z,\xi}}\times \Id_{V_y})$-compatible and
$(\pi_{W\times V_z}\times \Id_{V_y})$-compatible Schwartz-Bruhat function in $S_{W\times V_{z,x,\xi}}(W\times V_{z,x,\xi,y})$
with
$$\alpha^{+}(\theta)(w,z,\xi,x) = \max(R_{y,\Phi_W}(w,f(z)) , -\min(C_{A}(w),C_{A^c}(w)))$$
and
$$
\alpha^{-}(\theta)(w,z,\xi,x) =  \min(\ord f(z), R_{y,\Phi_W}(w,f(z))).
$$
Then, by the push-forward condition of $u$,
we deduce that $<u_{\Phi_W \times V_{\xi,z,x}},\theta>$ is $\pi_{W \times V_z}$-integrable and also
$\pi_{W\times V_{z,\xi}}$-integrable with the equality
$$\psi I_{f,\alpha^-, \alpha^+} =
\pi_{W \times V_{z,\xi} ! } <u_{\Phi_W\times V_{\xi,z,x}},\theta>
$$
By Fubini we obtain that $\psi I_{f,\alpha^-, \alpha^+}$ is $\pi_{W\times V_z}$-integrable and we can consider
\begin{equation}
\label{eqtheta}
<(f^*u)_{\Phi_W \times V_z}, t_{\alpha^-,\alpha^+}>= \pi_{W\times V_z !} (\psi I_{f,\alpha^-, \alpha^+}) = 
<u_{\Phi_W \times V_z},(\pi_{W\times V_z}\times \Id_{V_y})_{!}\theta> \\
\end{equation}

\textbf{Step 2. Extension theorem.}
Let $\Phi_W$ be a definable morphism in $\Def_P$.\\

$\bullet$ Integrability condition of the extension theorem.\\
Let $\alpha^+$ and $\alpha^-$ be definable functions from $W$ to $\mathbb Z$ satisfying \ref{inegalites-alpha}.
Using conditions \ref{inegalite-x-xi} the constructible exponential function $\theta$ is $(\pi_{W}\times \Id_{V_y})$-integrable and also a $(\pi_W\times \Id_{V_y})$-compatible Schwartz-Bruhat function in $S_{W\times V_{z,x,\xi}}(W\times V_{z,x,\xi,y})$.
Then, by the push-forward property of the definable distribution $u$ and by Fubini, the constructible exponential function 
$<(f^*u)_{\Phi_W \times V_z},t_{\alpha^-,\alpha^+}>$ is $\pi_W$-integrable.\\

$\bullet$ Pull-back assumption of the extension theorem.\\
Let $\Phi_W$ and $\Phi_{W'}$ be two definable sets in $\Def_P$.
Let $g:W\ra W'$ be a definable morphism in $\Def_P$ such that $\Phi_W = g \circ \Phi_{W'}$.
We have by construction
$$(g\times \Id_{V_z})^{*}(<(f^*u)_{\Phi_{W'}\times V_z},t_{\alpha^-,\alpha^+}>) =
(g\times \Id_{V_z})^{*} \left(\pi_{W\times V_z !}<u_{\Phi_{W'} \times V_{z,x,\xi}},\theta>\right).$$
The equality
$$(g\times \Id_{V_z})^{*}(<(f^*u)_{\Phi_{W'}\times V_z},t_{\alpha^-,\alpha^+}>) =
<(f^*u)_{\Phi_W\times V_z},
(g\times \Id_{V_z \times V_x})^{*}(t_{\alpha^-, \alpha^+})>
$$
in $\mathcal C(W\times V_z)^{\exp}$ follows from Proposition \ref{compatibilite-pull-back-push-forward} and the pull-back compatibility of $u$.\\

$\bullet$ Push-forward axiom of the extension theorem.\\ 
Let $\Phi_W : W \ra P$ be a definable morphism. For any definable function $\beta^+$, $\beta^-$, $\alpha^+$, $\alpha^-$ from $W$ to $\mathbb Z$, satisfying the inequalities \ref{inegalites-alpha}, $\beta^+ \geq \alpha^+$ and $\alpha^- \geq \beta^-$ we have the relation
\begin{equation} 
	<(f^*u)_{\Phi_W \times V_z},t_{\alpha^-,\alpha^+}> = 
	(\pi_{W \times V_z })_{!} \left( \mathbb L^{\beta^+ m} t_{\alpha^-, \alpha^+} <(f^*u)_{(\Phi_W \times V_z) \times V_x},s\mapsto t_{\beta^-, \beta^+}(w,x,s)>\right).
\end{equation}
by application of the definition \ref{eqtheta},  the push-forward compatibility condition of $u$ and Fubini, see also example \ref{splitballs}.\\

\textbf{2. Independence in the data.}
 For any $(B,r_B, \check{r}_{B}, N_B)$ a $\Lambda$-definable microlocally smooth data of $u$ with same assumptions such that
 $\pi(B)\cap \pi(A)$ is non empty, the two constructions of
$f^{*}u$ on $S(f^{-1}(\pi(A) \cap \pi(B)))$ coincide.
Indeed, by the motivic average formula or Remark \ref{remark-extension}, we obtain that the construction of the pull-back $f^*u$ for the microlocally smooth data $(A,\tilde{r},\tilde{\check r}, \tilde{N})$ gives the same definable distribution when
$\tilde{r}\geq r$, $\tilde{\check r}\geq \check r$ and $N\geq \tilde{N}$. Then in our case, the result follows considering this fact and the definable functions $\max(r,r_B)$, $\max(\check{r},\check{r}_B)$ and $\min(N,N_B)$.\\

\begin{rem}[ Localization and Heifetz formula]  \label{localization}
Let $\Phi_W$ be a definable morphism in $Def_P$ and $\varphi$ be a Schwartz-Bruhat function in $S_{W}(W\times V)$. Let $x_0$ be a point in $V$. Let $r_{x_0}:W\to \mathbb Z$ be a definable function such that $r_{x_0} \geq R_{x,\Phi_W}(-,x_0)$. We denote by $B_{x_0}$ the ball of center $x_0$ and valuative radius $r_{x_0}$.
Considering $\alpha^{+}(\varphi) \geq R_{x,\varphi 1_{B_{x_0}}}$ by equality \ref{evalf*u} we have

$$<(f^*u)_{\Phi_W},\varphi 1_{B_{x_0}}> = w \mapsto \mathbb L^{-\alpha^{+}(\varphi)(w)m}
 \int_{z \in B_{\alpha^{-}(\varphi)(w)}}(\varphi1_{B_{x_0}})(w,z)$$
 $$
 \left(
 \int_{\xi \in V_\xi} \left<u_{\Phi_W \times V_{\xi,z}}, \chi_{\Phi_W}(w,z,\cdot)E(\cdot\mid \xi)\right>
 \left(
 \int_{x\in V_x}1_{B_{\alpha^{+}(\varphi)(w)}}(x-z)E(-f(x)\mid \xi)dx
 \right)
 d\xi
 \right)
 dz
 $$
 with $\chi_{\Phi_W} : (w,z,y)\mapsto 1_{B(f(z),R_{y,\Phi_W}(w,f(z)))}(y).$

By assumption on $r_{x_0}$ and construction of $R_{x,\Phi_W}(w,x_0)$, we have for any $w$ in $W$ and $z$ in $B_{x_0}$ the equality
$$ B(f(z),R_{y,\Phi_W}(w,f(z))) = B(f(x_0),R_{y,\Phi_W}(w,f(x_0))).$$
Applying Fubini and the convolution formula for $ \varphi 1_{B_{x_0}}$ and the equality
$$\{(x,z)\mid z\in B_{\alpha^{-}(\varphi)},\: x\in B(z,\alpha^{+}(\varphi))\} =
 \{(x,z)\mid x\in B_{\alpha^{-}(\varphi)},\: z\in B(x,\alpha^{+}(\varphi))\},$$
we obtain the motivic version of the Heifetz formula in \cite{Hei85a}
$$
<(f^*u)_{\Phi_W},\varphi 1_{B_{x_0}}> =  
w \mapsto
 \int_{\xi \in V_\xi} \left<u_{\Phi_W \times V_\xi }, \chi_{\Phi_W}(w,x_0,\cdot)E(\cdot\mid \xi)\right>
 \left(\int_{x \in V_x}(\varphi1_{B_{x_0}})(w,x) E(-f(x)\mid \xi) dx \right)d\xi.
 $$
\end{rem}

\textbf{3. Function case.}  \label{function-case}
Suppose that $u$ is a Schwartz-Bruhat function in $S_{P}(P\times V)$. For any definable set $\Phi_W$ in $\Def_P$,
we consider $u_{\Phi_W}$ equal to $(\Phi_W\times \Id_V)^{*}u$.
As in Example \ref{cas-SB}, we consider the definable data $(P\times V_y \times \Bn, r, \check r, N)$.\\
Let $\alpha^-$, $\alpha^+$ be definable functions from $W$ to $\mathbb Z$ satisfying relations \ref{inegalites-alpha}.
By definition of $R_x$ and $R_y$, for any $w$ in $W$, for any $z$ in the ball $B_{\alpha^+(w)}$ we have the inclusion
\begin{equation}  \label{inclusion-image}
f(B(z,\alpha^+)) \subset B(f(z),R_{y}(\Phi_W(w),f(z))).
\end{equation}
By definition we have
$$<(f^*u)_{\Phi_W \times V_z}, t_{\alpha^-,\alpha^+}> = \pi_{W\times V_z !}<u_{\Phi_W \times V_{z,\xi,x}},\theta>.$$
Then, applying Fubini on \ref{def-f*u-alpha}, the projection axiom and the inverse Fourier transform formula and example
\ref{inclusion-image}
we obtain the relation
$$<(f^*u)_{\Phi_W \times V_z}, t_{\alpha^-,\alpha^+}> = \mathbb L^{-m_x}
< (u \circ (\Id_P\times f))_{\Phi_W \times V_z},t_{\alpha^-,\alpha^+}>.$$
Then, by the extension theorem (or the average formula), we obtain that
$$ f^*u = \mathbb L^{m_x} u\circ (\Id_P\times f).$$

\textbf{4. Wave front set of $f^*u$.}\\
Let $(x_0,\eta_0)$ a point of $f^{-1}(\pi(A))\times V_\eta \setminus \{0\}$ which does not belong to $f^{*}(\Lambda_\n A^{c})$ with $\eta_{0}$ in $im(\:^{t}df(x_0))$. As $\eta_0$ is different from zero, one has that $\:^{t}df(x_0)$ is also different from zero.
By definition, for any $\xi_0$ in $V_\xi$ with $^{t}\!df(x_0)(\xi_0)=\eta_0$,
the point $(f(x_0),\xi_0)$ belongs to $\Lambda_\n A$.\\

 For any point $\eta$ in the ball $B(\eta_0,\ord \eta_0+1)$, the fiber $^{t}\!df(x_0)^{-1}(\eta)$ is contained in the definable set
 $\Lambda_\n A_{f(x_0)}$. Indeed, for such $\eta$ there is a $\lambda$ in $\Lambda_n$ such that $\eta$ is equal to $\lambda \eta_0$, then
 the fiber $^{t}\!df(x_0)^{-1}(\eta)$ is equal to $\lambda ^{t}\!df(x_0)^{-1}(\eta_0)$ which is contained in $\Lambda_\n A_{f(x_0)}$. \\

We prove that the point $(x_0,\eta_0)$ is a $\Lambda$-microlocally smooth point of $f^{*}u$.
We denote by $P_{x_0,\eta_0}$ the intersection $\:^{t}\!df(x_0)^{-1}(\eta_0)\cap A_{f(x_0)}$.
By definable compactness  (Proposition \ref{definablecompactness}), we define $\check{r}_{f(x_0)}$ as the definable function
$$ \check{r}_{f(x_0)} \colon
 \begin{cases}
 P \longrightarrow \mathbb Z \\
  p  \longmapsto  \min_{\xi,A_{f(x_0)}} \check{r}(p,f(x_0),\xi).
  \end{cases}
$$
For any $p$ in $P$, we consider the open definable subset
 $$\mathcal C_{x_0,\eta_0,p} = \bigcup_{\xi \in P_{x_0,\eta_0}}B(\xi, \check{r}_{f(x_0)}(p)).$$
By assumption, for any $\xi$ in $\Bn$, thanks to the inequality
$ \check{r}_{f(x_0)}(p) \geq \ord \xi$,
the definable set $\mathcal C_{x_0,\eta_0,p}$ is contained in $\Bn$.
We denote by $C_{x_0,\eta_0,p}^{c}$ the closed subset $\Bn \setminus \mathcal C_{x_0,\eta_0,p}$.
We consider the definable set
$$\mathcal E_{\eta_0} = \{ \xi \in \Lambda_\n C^c_{x_0,\eta_0,p} \mid
\ord \eta_0 - N_\delta \leq \ord \xi \leq \ord \eta_0 - \min_{x \in B(x_0,r_{x_0,\eta_0}(p))} \:\ord \: ^{t}df(x)
\}.$$
By the previous remark, the definable function $\xi \mapsto \ord (^{t}df(x_0)\xi -\eta)$ takes finite values on the set $\mathcal E_{\eta_0}$ and by definable compactness this function takes finitely many values. In particular it has a maximum denoted by $M_{\eta_0}$.
We define $r_{x_0,\eta_0}(p)$ as $R_{x,P}(x_0)$ and $\check{r}_{x_0,\eta_0}(p)$ as $\ord \eta_0 +1$.
We consider the continuous and definable function
 $$
 N_{x_0,\eta_0} \colon
\begin{cases}
 \mathcal B_{x_0,\eta_0}  \longrightarrow \mathbb Z \\
 (p,x',r')  \longmapsto  \min (\mathcal N_{p,f(x_0)},\mathcal C_{x_0,\eta_0,x',r',p})
 \end{cases}$$
 defined on $$\mathcal B_{x_0,\eta_0}=\{(p,x',r')\in P \times V_{x'} \times \mathbb Z \mid B(x',r')\subset B(x_0,r_{x_0,\eta_0}(p))\}.$$
 with for any $(p,x',r')$ in $\mathcal B_{x_0,\eta_0}$
 $$ \mathcal C_{x_0,\eta_0,x',r',p} = \min \left(
 -\ord \eta_0 -\max (\ord \eta_0 -N_{R}+1,r'),
 -M_{\eta_0}-\max (M_{\eta_0} -N_{R}+1,r')\right).$$

We obtain that the point $(x_0,\eta_0)$ does not belong to $\WF_{\Lambda_n}(f^*u)$ proving the equality
$$<(f^*u)_{\Lambda_\n \times \D_{x_0,\eta_0}},T>1_{\E_{x_0,\eta_0}}
 1_{B_{N_{x_0,\eta_0}}} =
 <(f^*u)_{\Lambda_\n \times \D_{x_0,\eta_0}},T>1_{\E_{x_0,\eta_0}},$$
 where $\D_{x_0,\eta_0}$ is the product $P \times V_{x'} \times \mathbb Z \times V_{\eta},$
$$\E_{x_0,\eta_0}=\left\{\left((p,x',r'),\xi \right) \in \mathcal B_{x_0,\eta_0} \times V_{\xi} \: \mid \:
 \xi \in B(\xi_0,\check{r}_{x_0,\eta_0}(p)) \right\},$$
$T_{x_0,\xi_0}$ is the Schwartz-Bruhat function in
$S_{\Lambda_\n \times \D_{x_0,\eta_0}}(\Lambda_\n \times \D_{x_0,\eta_0} \times V)$ defined by
 $$T_{x_0,\eta_0}(\lambda,\left(p,x',r',\xi \right),x)=
 1_{B(x',r')}(x)E(x\mid \lambda \xi),$$
 and
 $$B_{N_{x_0,\eta_0}}=\{(\lambda,\left(p,x',r' \right),\xi)\in \Lambda_\n \times \mathcal B_{x_0,\eta_0} \times V_\xi \mid
 \ord \lambda \geq N_{x_0,\eta_0}(p,x',r')\}.$$

By the equality $r_{x_0,\eta_0} = R_{x,P}(x_0)$ we can apply the localization point (Remark \ref{localization}) and obtain
$$<(f^*u)_{\Lambda_\n \times \D_{x_0,\eta_0}},T>(\lambda,p,x',r',\eta) = $$
$$\int_{\xi \in V_\xi}<u_{P\times V_\xi},\chi(p,x_0,\cdot) E(\cdot\mid \xi)>
\int_{s\in V_s}1_{B(x',r')}(s)E((s\mid \lambda\eta)-(f(s)\mid \xi))dsd\xi.
$$

By the change of variables formula we obtain
$$<(f^*u)_{\Lambda_\n \times \D_{x_0,\eta_0}},T>(\lambda,p,x',r',\eta) = $$
$$\mathbb L^{-\ord \lambda m}
\int_{\xi \in V_\xi}<u_{P\times V_\xi},\chi(p,x_0,\cdot) E(\cdot\mid \lambda \xi)>
\int_{s\in V_s}1_{B(x',r')}(s)E(\lambda \delta(s,\xi,\eta))dsd\xi.$$

where $\delta(s,\xi,\eta)=(s\mid \eta)-(f(s)\mid \xi)$.\\

By \ref{eqTaylor} we have the equality

\begin{equation}  \label{I}
\int_{s\in V_s}1_{B(x',r')}(s)E(\lambda \delta(s,\xi,\eta))ds
\end{equation}
$$= E(\lambda \delta(x_0,\xi,\eta))
\int_{s'\in V_s}1_{B(x',r')}(x_0+s')
E(\lambda \delta( ^{t}df(x_0)\xi -\eta \mid s'))E(R_f(x_0,s',\xi)s'\mid s')ds.
$$
By lemma \ref{definablecompactness}, for any $p$ in $P$, the definable and continuous function
$\ord ^{t}df$ admits a minimum on the ball $B(x_0,r_{x_0,\eta_0})$ \footnote{in the case where $df$ vanishes on this ball, it is enough to apply the definable compactness on the intersection between
$B(x_0,r_{x_0,\eta_0})$ and $\ord^{t}df \leq \alpha$, for any constant $\alpha$.}.

\begin{enumerate}
\item Let $\xi$ be in $V_{\xi}$ with
$\ord \xi > \ord \eta_0 - \min_{x \in B(x_0,r_{x_0,\eta_0}(p))} \ord ^{t}df(x)$
we have $$\ord ^{t}df(x)(\xi)-\eta = \ord \eta = \ord \eta_0 $$
then $\lambda \ra I_{x',r'}(\lambda,\eta,\xi)$ has a bounded support in the ball
$B_{C_{x_0,\eta_0,x',r',p}}$.
\item Let $\xi$ be in $\Lambda_\n C_{x_0,\eta,p}$ such that
$\ord \xi \leq \ord \eta_0 - \min_{x \in B(x_0,r_{x_0,\eta_0}(p))} \ord ^{t}df(x)$, then
$$\lambda \longmapsto <u_{P \times V_\xi},\chi(p,x_0,\cdot) E(\cdot\mid \lambda \xi)> 1_{\Lambda_\n \mathcal C_{x_0,\eta_0,p} }(\lambda\xi)$$
 has support in the ball
 $B_{\mathcal N(p,f(x_0))
 -\ord \eta_0 + \min_{x \in B(x_0,r_{x_0,\eta_0}(p))} \ord ^{t}df(x)}$.\\
 Indeed, there is $\mu$ in $\Lambda_n$ and $\tilde{\xi}$ in $C_{x_0,\eta,p}$ such that
 $\mu \tilde{\xi} = \xi$. Note that
 $$\ord \mu \leq \ord \xi \leq  \ord \eta_0 - \min_{x \in B(x_0,r_{x_0,\eta_0}(p))} \ord ^{t}df(x)$$
 then, for any $\lambda$ in $\Lambda_n$, if
 $$\ord \lambda \leq \mathcal N(p,f(x_0))
 -\ord \eta_0 + \min_{x \in B(x_0,r_{x_0,\eta_0}(p))} \ord ^{t}df(x)$$
 then $\ord \lambda \mu \leq \mathcal N(p,f(x_0))$ and
 $$<u_{P \times V_\xi},\chi(p,x_0,\cdot) E(\cdot\mid \lambda \xi)> =
 <u_{P \times V_\xi},\chi(p,x_0,\cdot) E(\cdot\mid (\lambda \mu) \tilde{\xi})> = 0.$$
 \item Let $\xi$ be in $\Lambda_\n C^c_{x_0,\eta_0,p}$ such that
 $\ord \xi \leq \ord \eta_0 - \min_{x \in B(x_0,r_{x_0,\eta_0}(p))} \ord ^{t}df(x)$.

 There is $\mu$ in $\Lambda_n$ and $\tilde{\xi}$ in $C_{x_0,\eta,p}^c$ such that
 $\mu \tilde{\xi} = \xi$.
 By construction of $C_{x_0,\eta_0,p}^{c}$,
 the point $(f(x_0),\tilde \xi)$ does not belong in $A$ then by assumption on $f$ and $A$, we have the relation $\ord \:^{t}df(x_0)\tilde{\xi} \leq N_{\delta}$
 \begin{enumerate}
\item If $\ord \xi < - N_{\delta} +  \ord \eta_0$, then $\ord \mu \leq -N_{\delta} + \ord \eta_0 -\n$ and we have
$$\ord (\:^{t}df(x_0)\xi -\eta) = \ord \:^{t}df(x_0)\xi \leq \ord \eta_0$$
\item If $\xi$ belongs to the bounded and closed set
$\mathcal E_{\eta_0}$ then
$$\ord (\:^{t}df(x_0)\xi -\eta) = \ord \:^{t}df(x_0)\xi \leq M_{\eta_0}$$
\end{enumerate}
In these cases, by Proposition \ref{oscillante},
$\lambda \ra I_{x',r'}(\lambda,\eta,\xi)$ has a bounded support in the ball
$B_{C_{x_0,\eta_0,x',r',p}}$.
\end{enumerate}
\end{proof}
\subsection{Tensor product and product of definable distributions}
Let $P$ be a definable set in $\Def_k$ and $d_x$ and $d_y$ be two positive integers.
Let $V_x$ and $V_y$ be the definable sets
$h[d_x,0,0]$ and $h[d_y,0,0]$. Let $(u_{\Phi_W})$ in $S'_{P}(V_x)$ and $(v_{\Phi_W})$
in $S'_{P}(V_y)$ be two definable distributions.
\begin{lem}
For any definable set $W$ in $\Def_P$, for any Schwartz-Bruhat function $\varphi$ in
 $S_{W}(W \times V_x \times V_y)$ the constructible exponential function $\psi$ equal to
 $<v_{\Phi_W \times V_x},\varphi>$ in $\mathcal C(W\times V_x)^{\exp}$
 is a Schwartz-Bruhat function in
 $S_{W}(W\times V_x)$ with
 $$\alpha^{+}(\psi) = \alpha^{+}(\varphi)\circ \pi_{W}\:\:\mathrm{and}\:\:
 \alpha^{-}(\psi) = \alpha^{-}(\varphi)\circ \pi_{W}.
 $$
 Furthermore, for any definable map $g$ from $W$ to $W'$, if $\varphi$ is
 $(g\times \Id_{V_{x,y}})$-compatible then, $<v_{\Phi_W \times V_x},\varphi>$ is
 $(g\times \Id_{V_x})$-compatible.
\end{lem}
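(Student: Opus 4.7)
The strategy is to leverage the pull-back and push-forward compatibility built into the definition of a definable distribution to transfer the bounded support and locally constant properties of $\varphi$ across the $V_y$-integration carried out by $v$. To make sense of $\psi=\langle v_{\Phi_W\times V_x},\varphi\rangle$, one first regards $\varphi\in S_W(W\times V_x\times V_y)$ as an element of $S_{W\times V_x}((W\times V_x)\times V_y)$, viewing $V_y$ as the integration variable and $W\times V_x$ as parameters; the data $\alpha^\pm(\varphi)$, originally functions on $W$, give admissible bounds on $W\times V_x$ by composition with $\pi_W:W\times V_x\to W$. With this identification $\psi$ lives in $\mathcal C(W\times V_x)^{\exp}$, and one must check it lies in $S_W(W\times V_x)$ with the announced $\alpha^\pm$.

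For the bounded support condition, the key observation is that if $\ord x<\alpha^-(\varphi)(w)$, then the support condition on $\varphi$ in $V_x\times V_y$ forces $\varphi(w,x,y)=0$ for every $y\in V_y$. Hence, for any definable $\beta:W\to\Z$ with $\beta\leq\alpha^-(\varphi)$, pulling $1_{B_\beta}\in\mathcal C(W\times V_x)^{\exp}$ back to $(W\times V_x)\times V_y$ and using the $\mathcal C(W\times V_x)^{\exp}$-linearity of $v_{\Phi_W\times V_x}$ yields $\psi\cdot(1-1_{B_\beta})=\langle v,(1-1_{B_\beta})\varphi\rangle=0$, i.e.~$\psi=\psi\cdot 1_{B_\beta}$. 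The $\pi_W$-integrability of $\psi$ then follows from that of $\varphi$ by applying Fubini together with the push-forward compatibility of $v$ along $\pi_W$.

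For the locally constant condition, introduce $\sigma:W\times V_z\times V_a\to W\times V_x$ sending $(w,z,a)\mapsto(w,z-a)$ and $\pi:W\times V_z\times V_a\to W\times V_z$ the projection, so that by definition $\psi*1_{B_\alpha}=\pi_!(\sigma^*\psi\cdot 1_{B_\alpha}(w,a))$. Pull-back compatibility of $v$ gives $\sigma^*\psi=\langle v_{\Phi_W\times V_z\times V_a},(\sigma\times\Id_{V_y})^*\varphi\rangle$, and the projection formula combined with push-forward compatibility of $v$ produces
\begin{equation*}
\psi*1_{B_\alpha}=\bigl\langle v_{\Phi_W\times V_z},\,(\pi\times\Id_{V_y})_!\bigl((\sigma\times\Id_{V_y})^*\varphi\cdot 1_{B_\alpha}(w,a)\bigr)\bigr\rangle.
\end{equation*}
The inner push-forward is exactly the $V_x$-convolution $\Psi(w,z,y):=\int_{V_a}\varphi(w,z-a,y)1_{B_\alpha}(w,a)\,da$. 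For $\alpha\geq\alpha^+(\varphi)$, the locally constant condition for $\varphi$ on $V_x\times V_y$ specialized to translations with trivial $V_y$-component (using the axiom on volumes of balls to redistribute the factor $\L^{-\alpha(d_x+d_y)}$) gives $\Psi=\L^{-\alpha d_x}\varphi$, whence $\psi*1_{B_\alpha}=\L^{-\alpha d_x}\psi$.

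The final $(g\times\Id_{V_x})$-compatibility assertion is then a direct consequence of the definitions: the identification $\alpha^\pm(\psi)=\alpha^\pm(\varphi)\circ\pi_W$ immediately produces the required factorizations $\alpha^\pm(\psi)=\beta^\pm\circ g$ from the hypothesis $\alpha^\pm(\varphi)=\beta^\pm\circ g$, and the $(g\times\Id_{V_x})$-integrability of $\psi$ together with $(g\times\Id_{V_x})_!\psi\in\mathcal I_{W'}(W'\times V_x)^{\exp}$ follow from the analogous properties of $\varphi$ by applying Fubini to the push-forward compatibility of $v$. The main obstacle is the locally constant step, specifically the passage from $\varphi*1_{B_\alpha}=\L^{-\alpha(d_x+d_y)}\varphi$ on $V_x\times V_y$ to the $V_x$-only convolution identity $\Psi=\L^{-\alpha d_x}\varphi$; this requires expressing the ball $B_\alpha\subset V_x\times V_y$ as a fibered product of balls in $V_x$ and $V_y$, applying Fubini, and using the volume-of-balls axiom to account for the missing factor $\L^{-\alpha d_y}$ coming from integrating out the $V_y$-direction where $\varphi$ is constant.
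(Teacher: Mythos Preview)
Your proposal is correct and follows essentially the same route as the paper's own proof: bounded support via $\mathcal C(W\times V_x)^{\exp}$-linearity of $v_{\Phi_W\times V_x}$, integrability via the push-forward condition on $v$, and the locally constant condition by commuting the $V_x$-convolution past $v$ using the pull-back/push-forward compatibility and then invoking the $V_x$-only identity $\varphi *_{V_x} 1_{B_\alpha}=\mathbb L^{-\alpha d_x}\varphi$. You are in fact more explicit than the paper about the passage from the full $V_{x,y}$-convolution identity to the $V_x$-only one; the paper simply asserts the displayed equality $\langle v,\varphi\rangle * 1_{B_{\alpha^+}}=\langle v,\varphi*1_{B_{\alpha^+}}\rangle=\mathbb L^{-\alpha^+ d_x}\langle v,\varphi\rangle$ without further comment. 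One clean way to justify that step (sharper than your sketch, which leans on the phrase ``where $\varphi$ is constant'' in $V_y$, a fact not yet established) is to use associativity: write $\varphi=\mathbb L^{\alpha(d_x+d_y)}\varphi*_{V_{x,y}}1_{B_\alpha}^{(V_{x,y})}$, convolve in $V_x$ with $1_{B_\alpha}^{(V_x)}$, and use $1_{B_\alpha}^{(V_{x,y})}*_{V_x}1_{B_\alpha}^{(V_x)}=\mathbb L^{-\alpha d_x}1_{B_\alpha}^{(V_{x,y})}$.
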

\begin{proof}
As $\varphi$ is a Schwartz-Bruhat function in $S_{W}(W \times V_x \times V_y)$, using Fubini, we observe it is also a Schwartz-Bruhat function in
$S_{W\times V_x}(W \times V_x \times V_y)$ with same data. In particular, this Schwartz-Bruhat function is $(\pi_W \times \Id_{V_y})$-compatible. Then, for any definable function $\alpha$ from $W\times V_x$ to $\mathbb Z$ satisfying $\alpha \leq \alpha^{-}(\varphi)$, the equalities
$$\varphi 1_{B_{\alpha^{-}(\varphi)}} = \varphi\:\:\mathrm{and}\:\: 1_{B_{\alpha}}1_{B_{\alpha^{-}(\varphi)}} = 1_{B_{\alpha^{-}(\varphi)}}$$
imply the equality
$$<v_{\Phi_W \times V_x},\varphi> 1_{B_{\alpha}} = <v_{\Phi_W \times V_x}, \varphi>.$$
As $\varphi$ is $(\pi_W\times \Id_{V_y})$-integrable, by the push-forward assumption on $v$,
the constructible exponential function $<v_{\Phi_W \times V_x},\varphi>$ is $W$-integrable then using the pushforward relations on $v$ on the convolution product definition we obtain the equality
$$
<v_{\Phi_W \times V_x},\varphi> * 1_{B_{\alpha^+(\varphi)}} =
<v_{\Phi_W \times V_{x}},\varphi*1_{B_\alpha^{+}(\varphi)}> =
\mathbb L^{\alpha^{+}(\varphi)d_x} <v_{\Phi_W \times V_{x}},\varphi>
$$
and as in the proof of Lemma \ref{pull-backSB} we obtain the same relation for any definable function $\alpha$ from $W\times V_y$ to $Z$ which is bigger than $\alpha^{+}(\varphi)$. We conclude that $<v_{\Phi_W \times V_x},\varphi>$ is a Schwartz-Bruhat function in $S_{W}(W\times V_x)$.

 Let $g$ be a definable map from $W$ to $W'$ and assume $\varphi$ is
 $(g\times \Id_{V_{x,y}})$-compatible. Then by assumption $\varphi$ is
 $(g\times \Id_{V_{x,y}})$-integrable and there are definable functions
 $\beta^{+}$ and $\beta^{-}$ from $W'$ to $\mathbb Z$ such that
 $\alpha^{+}(\varphi)=\beta^{+}\circ g$ and $\alpha^{-}(\varphi)=\beta^{-}\circ g$.
 In particular $\varphi$ is $(g\times \Id_{V_x})\times \Id_{V_y}$ compatible
 then using the push-forward relation on $v$, $\psi$ equal to $<v_{\Phi_W \times V_x},\varphi>$ is  $(g\times \Id_{V_x})$-compatible.
 \end{proof}

\begin{defn}[Tensor product of definable distributions]  \label{produit-tensoriel}
Let $P$ be a definable set in $\Def_k$ and $d_x$ and $d_y$ be two positive integers.
Let $V_x$ and $V_y$ be the definable sets
$h[d_x,0,0]$ and $h[d_y,0,0]$. Let $(u_{\Phi_W})$ in $S'_{P}(V_x)$ and $(v_{\Phi_W})$
in $S'_{P}(V_y)$ be two definable distributions.
If for any definable function $\Phi_W$ from $W$ to $P$ and for any Schwartz-Bruhat function $\varphi$ in $S_{W}(W\times V_{x,y})$ we have the equality
$$<u_{\Phi_W},<v_{\Phi_W \times V_x},\varphi>> = <v_{\Phi_W},<u_{\Phi_W \times V_y},\varphi>>$$
then we define
$$<(u\otimes v)_{\Phi_W},\varphi> := <u_{\Phi_W},<v_{\Phi_W \times V_x},\varphi>>.$$
It is a definable distribution in $S'_{P}(V_x\times V_y)$.
\end{defn}
\begin{example}
If $P$ is a point and if $f$ and $g$ are locally integrable functions respectively on $V_x$
and $V_y$, then $f\otimes g$ is the usual tensor product on $V_x\times V_y$ and the previous equality follows from Fubini.
\end{example}
\begin{proof}
By the previous lemma, the above evalutions make sense.
Let $g:W \ra W'$ be a definable map and $\varphi$ be a Schwartz-Bruhat function in
$S_{W}(W\times V_{x,y})$. Using the pull-back relations on $u$ and $v$ we have

\begin{align*}
 g^{*}<(u\otimes v)_{\Phi_W},\varphi> &=
 g^{*}<u_{\Phi_{W'}},<v_{\Phi_{W'}\times V_x},\varphi>> \\
 &= <u_{\Phi_{W}},(g\times \Id_{V_x})^{*}(<v_{\Phi_{W'}\times V_x},\varphi>)> \\
 &= <u_{\Phi_{W}},<v_{\Phi_{W'}\times V_x},(g\times \Id_{V_x})^{*}\varphi>>.
\end{align*}
If $\varphi$ is $(g\times \Id_{V_{x,y}})$-compatible then by the previous lemma,
$<v_{\Phi_W \times V_x},\varphi>$ is also $(g\times \Id_{V_x})$-compatible and
using the push-forward relations on $u$ and $v$ we have
\begin{align*}
   g_{!}<(u\otimes v)_{\Phi_W},\varphi> &=
   g_{!}<u_{\Phi_W},<v_{\Phi_W \times V_x},\varphi>> \\
   &= <u_{\Phi_{W'}},(g\times \Id_{V_x})_{!}<v_{\Phi_W \times V_x},\varphi>> \\
   &= <u_{\Phi_{W'}},<v_{\Phi_{W'} \times V_x},(g\times \Id_{V_x})_{!}\varphi>>.
  \end{align*}
\end{proof}
As in the real setting and in the $p$-adic setting we have

\begin{prop}[Wave front set of $u \otimes v$]
Let $P$ be a definable set in $\Def_k$ and $d_x$ and $d_y$ be two positive integers.
Let $V_x$ and $V_y$ be the definable sets
$h[d_x,0,0]$ and $h[d_y,0,0]$. Let $(u_{\Phi_W})$ in $S'_{P}(V_x)$ and $(v_{\Phi_W})$
in $S'_{P}(V_y)$ be two definable distributions.
Let $\n$ be a positive integer and $\Lambda_\n$ the corresponding subgroup of $h[1,0,0]^{\times}$.
If the tensor product $u\otimes v$ exists then,
$$
\WF_{\Lambda_\n}(u\otimes v)  \subset 
\left(\begin{array}{cl}
                             & \{(x,\xi,y,\eta) \mid (x,\xi) \in \WF_{\Lambda_\n}(u)\:\mathrm{and}\: (y,\eta) \in \WF_{\Lambda_\n}(v) \} \\
                             \cup & \{(x,\xi,y,0)\mid (x,\xi) \in \WF_{\Lambda_n}(u)\:\mathrm{and}\: y\in \SS(v)\} \\
                             \cup & \{(x,0,y,\eta)\mid (y,\eta) \in \WF_{\Lambda_n}(v)\:\mathrm{and}\:x\in \SS(u)\}.
\end{array}
\right)
$$
\end{prop}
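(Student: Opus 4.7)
The approach is to show that any point $w_0=(x_0,\xi_0,y_0,\eta_0)\in T^{*}(V_x\times V_y)\setminus\{0\}$ lying outside the right-hand side admits a $\Lambda_{\n}$-microlocally smooth datum for $u\otimes v$ in the sense of Definition~\ref{mls-data} and Definition~\ref{microlocally smooth point}. The main ingredient is the factorisation
$$
\langle(u\otimes v)_{\Phi_W\times V_{\xi,\eta}},1_{B(x',r'_1)}(x)1_{B(y',r'_2)}(y)E(x\mid\lambda\xi)E(y\mid\lambda\eta)\rangle=A_u(\lambda,\xi)\cdot B_v(\lambda,\eta),
$$
where $A_u(\lambda,\xi)=\langle u_{\Phi_W\times V_\xi},1_{B(x',r'_1)}E(\cdot\mid\lambda\xi)\rangle$ and $B_v$ is defined symmetrically; this is a direct consequence of Definition~\ref{produit-tensoriel} and the multiplicativity $E((x,y)\mid\lambda(\xi,\eta))=E(x\mid\lambda\xi)E(y\mid\lambda\eta)$. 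By conical invariance I may assume $(\xi_0,\eta_0)\in\Bn$ inside $V_x\times V_y$.

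\textbf{Case analysis.}
The hypothesis $w_0\notin$ right-hand side breaks into four (possibly overlapping) cases: $(a)$ $\xi_0\neq 0$ and $(x_0,\xi_0)\notin\WF_{\Lambda_{\n}}(u)$; $(b)$ the symmetric condition for $v$; $(c)$ $\xi_0\neq 0$, $\eta_0=0$ and $y_0\notin\SS(v)$; $(d)$ the symmetric of $(c)$. In cases $(a)$ and $(b)$, Remark~\ref{egalitepointmicrolocallisse} supplies a point-wise datum $(r_u,\check r_u,N_u)$ for $u$ (respectively $v$), which forces $A_u(\lambda,\xi)$ to vanish when $\xi\in B(\xi_0,\check r_u)$ and $\ord\lambda\geq N_u$; the factorisation then kills the whole product. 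I would lift $(r_u,\check r_u,N_u)$ to a datum for $u\otimes v$ by enlarging the ambient set with the free $y$- and $\eta$-coordinates and keeping the bound $N_u$ unchanged, then check definability and continuity from the corresponding properties of $r_u,\check r_u,N_u$. In cases $(c)$ and $(d)$, local smoothness of $v$ at $y_0$ provides a Schwartz--Bruhat function $\psi\in S_P(P\times V_y)$ with $1_{B(y_0,r_v)}v=1_{B(y_0,r_v)}\psi$, so $B_v(\lambda,\eta)=\mathcal F(\psi\cdot 1_{B(y',r'_2)})(\lambda\eta)$ by the very definition of the Fourier transform; by Example~\ref{Fourier-indicatrice} and the formulas of Section~\ref{sec:SB}, this is a Schwartz--Bruhat function in $\lambda\eta$ with support in $\{\ord(\lambda\eta)\geq 1-\max(\alpha^{+}(\psi),r'_2)\}$, yielding a definable bound on $\ord\lambda$ when $\ord\eta$ is controlled.

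\textbf{Main obstacle.}
The delicate point in cases $(c)$ and $(d)$ is the behaviour at $\eta=0$ in the unavoidable ball $B(0,\check r)$ around $\eta_0=0$: the bounded-support property of $\mathcal F(\psi\cdot 1_{B(y',r'_2)})$ only forces $B_v(\lambda,\eta)$ to vanish for $\eta\neq 0$, so producing a uniform bound $N$ requires a careful stratification of the test region by $\ord\eta$ and a fine tracking of the radii obtained from $\alpha^{+}(\psi)$, $r_v$ and the parameters of $u$. The essential tool is the definable compactness Corollary~\ref{compacite-param}, which guarantees that the needed infima and maxima of the various definable functions (like $N_u$, $\alpha^{+}(\psi)$, $r_u$ and $\check r_u$) over compact definable parameter families yield honest definable continuous functions, so that the quadruple $(\mathcal A,r,\check r,N)$ built above is indeed a $\Lambda_{\n}$-microlocally smooth datum. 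Once the datum is written down, the verification reduces to the factorisation together with the pull-back and push-forward compatibility axioms of Definition~\ref{definable distribution} and the $\mathcal C(W)^{\exp}$-linearity of the tensor product constructed in Definition~\ref{produit-tensoriel}.
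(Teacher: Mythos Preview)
Your strategy—factorise $\langle(u\otimes v)_{\Lambda_\n\times\mathcal D},T\rangle$ as $A_u(\lambda,\xi)\cdot B_v(\lambda,\eta)$ and, according to the case, read off a $\lambda$-support bound from one of the factors—is exactly the paper's. Your cases (a) and (b) are in fact cleaner than the paper's Case~1: a support bound on a single factor already bounds the product, so $N=N_u$ (resp.\ $N_v$) alone works, whereas the paper assumes both conditions and takes $N=\max(N_u,N_v)$.

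The ``main obstacle'' you raise in case (c) is a genuine gap, and the mechanism you propose does not close it. With $B_v(\lambda,\eta)=\mathcal F(\psi\,1_{B(y',r')})(\lambda\eta)$ the support condition is $\ord(\lambda\eta)\geq 1-\max(\alpha^{+}(\psi),r')$; for $\eta\neq 0$ this yields $\ord\lambda\geq 1-\max(\alpha^{+}(\psi),r')-\ord\eta$, a lower bound which tends to $-\infty$ as $\ord\eta\to+\infty$ and is vacuous at $\eta=0$, where $B_v(\lambda,0)=\int_{B(y',r')}\psi$ is a constant in $\lambda$. Corollary~\ref{compacite-param} only lets you optimise definable continuous $\Z$-valued functions over \emph{closed bounded} definable sets, and $\eta\mapsto -\ord\eta$ is not bounded below on the ball $B(0,\check r)$, so no stratification in $\ord\eta$ can manufacture a single $N(p,(x',y'),r')$ independent of $(\xi,\eta)$ as Definition~\ref{microlocally smooth point} requires. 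Consequently, whenever $(x_0,\xi_0)\in\WF_{\Lambda_\n}(u)$—so that case (a) is unavailable and $A_u$ carries no $\lambda$-bound—your case (c) argument cannot conclude; already $u=\delta_0$, $v=1$ exhibits a product $A_u\cdot B_v$ with no bounded $\lambda$-support at $\eta=0$. The paper never treats your case (c) in that generality: its Case~2 assumes \emph{both} $(x_0,\xi_0)\notin\WF_{\Lambda_\n}(u)$ and $y_0\notin\SS(v)$, and under those hypotheses the $u$-factor already supplies a bound uniform in $\eta$, so one is effectively back in your case (a).
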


\begin{proof}
 We use in this proof notation of Definition \ref{microlocally smooth point}.
 If $(x_0,\xi_0)$ and $(y_0,\eta_0)$ does not belong to $\WF_{\Lambda_\n}u$ and
 $\WF_{\Lambda_\n}v$ with $\xi \neq 0$ and $\eta \neq 0$ then, there are definable functions
 $r_{x_0,\xi_0}$, $\check{r}_{x_0,\xi_0}$, $r_{y_0,\eta_0}$, $\check{r}_{y_0,\eta_0}$ from $P$ to $\mathbb Z$ and
 definable and continuous functions
 $$N_{x_0,\xi_0} : B_{x_0,\xi_0} \ra \mathbb Z,\:\:
 N_{y_0,\eta_0} : B_{y_0,\eta_0} \ra \mathbb Z
 $$
 satisfying
\begin{equation} \label{u}
(<u_{\Lambda_\n \times \D_{x_0,\xi_0}},T_{x_0,\xi_0}>1_{\E_{x_0,\xi_0}})1_{B_{N_{x_0,\xi_0}}} =
<u_{\Lambda_\n \times \D_{x_0,\xi_0}},T_{x_0,\xi_0}>1_{\E_{x_0,\xi_0}}
\end{equation}
and
\begin{equation} \label{v}
(<v_{\Lambda_\n \times \D_{y_0,\eta_0}},T_{y_0,\eta_0}>1_{\E_{y_0,\eta_0}})1_{B_{N_{y_0,\eta_0}}} =
<v_{\Lambda_\n \times \D_{y_0,\eta_0}},T_{y_0,\eta_0}>1_{\E_{y_0,\eta_0}}
\end{equation}
We consider
$$
r_{(x_0,y_0),(\xi_0,\eta_0)} = \max(r_{x_0,\xi_0}, r_{y_0,\eta_0}),\:
\check{r}_{(x_0,y_0),(\xi_0,\eta_0)} = \max(\check{r}_{x_0,\xi_0}, \check{r}_{y_0,\eta_0})
$$
and
$$N_{(x_0,y_0),(\xi_0,\eta_0)} = \max (N_{(x_0,\xi_0)},N_{(y_0,\eta_0)}).$$
By definition of the tensor, by previous identities \ref{u} and \ref{v}, and by the equivalencies
$$B((x',y'),r') \subset B((x_0,y_0),r_{(x_0,y_0),(\xi_0,\eta_0)}) $$
$$\Leftrightarrow
B(x',r') \subset B(x_0,r_{(x_0,y_0),(\xi_0,\eta_0)}) \:\mathrm{and} \:
B(y',r') \subset B(y_0,r_{(x_0,y_0),(\xi_0,\eta_0)})$$
and
$$
\ord \lambda \geq N((x_0,y_0),(\xi_0,\eta_0))(x',y',r') \Leftrightarrow
\ord \lambda \geq N(x_0,\xi_0)(x',r') \:\: \land \: \:
\ord \lambda \geq N(y_0,\eta_0)(y',r')
$$
namely
$$ 1_{B((x_0,y_0),(\eta_0,\xi_0))}(\lambda,p,x',y',r') =
1_{B(x_0,\xi_0)}(\lambda,p,x',r')1_{B(y_0,\eta_0)}(\lambda,p,y',r').$$
we
obtain
$$
(<(u\otimes v)_{\Lambda_\n \times \D_{(x_0,y_0),(\xi_0,\eta_0)}},
T_{(x_0,y_0),(\xi_0,\eta_0)}>1_{\E_{(x_0,y_0),(\xi_0,\eta_0)}})
1_{B_{N_{(x_0,y_0),(\xi_0,\eta_0)}}} $$
$$ =
<(u\otimes v)_{\Lambda_\n \times \D_{(x_0,y_0),(\xi_0,\eta_0)}},T_{(x_0,y_0),(\xi_0,\eta_0)}>
1_{\E_{(x_0,y_0),(\xi_0,\eta_0)}}
$$
which means that the point $((x_0,y_0),(\xi_0,\eta_0))$ does not belong to
$\WF_{\Lambda_\n}(u\otimes v)$.

If $(x_0,\xi_0)$ does not belong to $\WF_{\Lambda_\n}(u)$ and $y_0$ is a smooth point, then as before there are data $r_{x_0,\xi_0}$, $\check{r}_{x_0,\xi_0}$, $N_{x_0,\xi_0}$ satisfying
\ref{u}. By Definition \ref{singular-support}, there are also a definable function $r_{y_0}$ and a Schwartz-Bruhat function
$\psi$ in $S_{P}(P\times V_y)$ such that $1_{B(y_0,r_{y_0})}u$ and
$1_{B(y_0,r_{y_0})}\psi$ are equal. Then using Example \ref{cas-SB} and same ideas as before we obtain that $((x_0,y_0),\xi_0)$ does not belong to $\WF_{\Lambda_\n}(u \otimes v)$.
The last case is similar.
\end{proof}

As in the real case and in the $p$-adic case, using the construction of the tensor product
and the pull-back Theorem \ref{thmf^{*}u} we obtain locally the definition of the product of two definable distributions
\begin{defn}[Product of definable distributions]  \label{produit}
Let $\n\geq 1$. Let $u$ and $v$ be two definable distributions in $S_{P}'(V)$ and denote by $i$ the diagonal injection
$$ i : \Delta = \{(x,y)\in V^2 \mid x=y\} \ra V\times V.$$
If the tensor product $u\otimes v$ exists,
if $(P\times A,r,\check{r},N)$ is a $\Lambda$-microlocally smooth data of $u\otimes v$ such that
\begin{equation} \label{inclusion}
A \subset V^{2}\times \Bn^2 \setminus
\left(\WF_{\Lambda_\n}(u\otimes v) \cup \{(x,y),(\xi,\eta) \mid \xi=-\eta\}\right)
\end{equation}
then we define a product $u\cdot v$ in $S'_{P}(\pi(A) \cap \Delta)$ equal to $i^{-1}(u\otimes v)$.

In particular, we have the inclusions
$$\WF_{\Lambda_n}(u\cdot v) \subset i^{*}(\Lambda_{\n} A^{c})$$
and
$$\{(x,\zeta)\mid \exists \xi, \eta\,\; \zeta = \xi + \eta,\: (x,\xi)\in \WF_{\Lambda_\n}(u),
(x,\eta)\in \WF_{\Lambda_\n}(v)\} \subset i^{*}(\Lambda_{\n} A^{c}).$$
\end{defn}

\begin{proof}
The construction follows from Theorem \ref{thmf^{*}u} applied to $u\otimes v$.
Note that $^{t}di(x_0)(\xi,\eta)=\xi+\eta$, then, the inclusion \ref{inclusion} follows
from the assumption 2 of \ref{thmf^{*}u}.
\end{proof}

\subsection*{Acknowledgments}
We are very grateful to Fran\c{c}ois Loeser who proposed us a postdoctoral fellowship on that topic at the Institut Math\'ematique de Jussieu and for his guidance all along the project. We would like also thank Julia Gordon, Immanuel Halupczok and Yimu Yin for inspiring discussions and specially Raf Cluckers for his kindness and all his technical advices.
This work is partially supported by ANR-15-CE40-0008 (Defigeo) and  by  the
ERC Advanced Grant NMNAG.

\bibliographystyle{plain}
\bibliography{biblio_complete}
\end{document}